	\newtheorem{theorem}{Theorem}
\newtheorem{proposition}{Proposition}
\newtheorem{Corollary}{Corollary}
\newtheorem{Lemma}{Lemma}
\theoremstyle{definition}
\newcommand{\flc}{f_{\ell,c}}
\newcommand{\C}{\mathbb{C}}
\newcommand{\N}{\mathbb{N}}
\newcommand{\R}{\mathbb{R}}
\newcommand{\Z}{\mathbb{Z}}
\newcommand{\cA}{\mathcal{A}}
\newcommand{\cC}{\mathcal{C}}
\newcommand{\cF}{\mathcal{F}}
\newcommand{\cI}{\mathcal{I}}
\newcommand{\cJ}{\mathcal{J}}
\newcommand{\cL}{\mathcal{L}}
\newcommand{\cO}{\mathcal{O}}
\newcommand{\cP}{\mathcal{P}}
\newcommand{\cU}{\mathcal{U}}
\newcommand{\teta}{\widetilde{\teta}}
\DeclareMathOperator{\Real}{Re}
\DeclareMathOperator{\meas}{m}
\DeclareMathOperator{\Ima}{Im}
\DeclareMathOperator{\dist}{dist}
\DeclareMathOperator{\HD}{HD} % Hausdorff dimension
\DeclareMathOperator{\HH}{H} 
\DeclareMathOperator{\PC}{PC}
\DeclareMathOperator{\Sing}{Sing}
\title[Hyperbolic  entire transcendental maps with Baker domains]{Thermodynamic formalism for  entire transcendental maps with hyperbolic Baker domains}
\author{Adrián Esparza-Amador \and Irene Inoquio-Renteria}
\address{Instituto de Ciencias Físicas y Matemáticas\\
Universidad Austral de Chile, Chile}
\email{adrian.esparza@uach.cl, ireneinoquio@uach.cl}
\date{\today}
\begin{document}

\begin{abstract}
We provide a version of a thermodynamic formalism of entire transcendental maps  that exhibit Baker domains, denoted as $f_{\ell, c}: \mathbb C\to \mathbb C$ and defined by $f_{\ell, c}(z)= c-(\ell-1)\log c+ \ell z- e^z$, where  $\ell \in \mathbb N$, with $\ell \geq 2 $ and $c$ belongs to the disk $ D(\ell, 1)$ in the complex plane.
We show in particular the existence and uniqueness of conformal measures and that the  Hausdorff dimension $\HD(J_r(f))$ is the unique zero of the pressure function $t\to P(t)$, for $t>1,$ where $J_r(f)$ is the radial Julia set.
\end{abstract}

\subjclass[2020]{Primary 37D35; 37F10}
\keywords{Thermodynamic formalism, entire transcendental functions, Baker domains}

\maketitle

\section{Introduction}

For a transcendental  map $f$, the \emph{Fatou set},  denoted by $\cF(f)$ is the subset of the complex plane $\mathbb C$ where the iterates $f^n$ of $f$ form a normal family and the \emph{Julia set}, denoted by $\mathcal{J}(f)$ is the complement of the Fatou set.
A connected component $U$ of $\cF(f)$ is invariant if $f(U)\subset U.$
If $f$ is an entire transcendental map and if $U$ is an invariant component of $ \cF(f)$ then  a complete classification of all possible invariant components of $\cF(f)$ is stated in 
\cite{Ber93}.

We denote  the set of all  singularities  of $f^{-1}$ by $\Sing(f^{-1})$ which is the set of  critical and asymptotic  values of $f$. Define  the Post-singular set 
$$\cP(f):= \overline{\bigcup_{n=0}^\infty f^n(\Sing(f^{-1}))}.$$

The post-singular set play an important role in the study of the dynamics of rational maps since every periodic Fatou component is related to this set. This relation extends to the transcendental scenario for Fatou components present in the rational iteration theory,
for example, when $U$ is a B\"ottcher, Schr\"order, or Leau domain, then $U\cap \Sing (f^{-1})\neq \emptyset$. If $U$ is a Siegel disc, then $\partial U \subset \cP(f)$. Unfortunately, this relation is not clear for Fatou components that are specific for transcendental maps, that is, for \emph{Baker and wandering domains}. For example,  there are  entire maps with  Baker domains $U$, such that $U\cap \Sing(f^{-1})= \emptyset$.

On the other hand, there are condition on the post-singular set that ruled out the existence of this type of Fatou component. 
It is well known that if $f$ is an entire transcendental map in the class of Eremenko-Lyubich 
$\mathcal B:=\{f: \Sing(f^{-1}) \textrm{ is bounded}\} $, then $f$ does not have a Baker domain. Representative examples in the class $\mathcal B$ are, $f_\lambda(z)= \lambda \exp(z)$, $\lambda\in(0,1/e)$, $f_\lambda(z)= \lambda \sin(z)$, $\lambda\in (0,1)$ whose dynamics and ergodic properties  have been well studied extensively in recent times. 

Since Baker's early works \cite{Bak2, Bak3}, it has been proved that Baker domains have a very different complexity than the other Fatou components. Besides there is no direct relation between general Baker domain and post-singular set, there are different types of Baker domains according to its local dynamics near the essential singularity at infinity (see survey \cite{Rip08} for a detailed exposition on Baker domains). In the last decade several authors had make progress on the study of Baker domains, among them we refer \cite{fj23} in the entire case and \cite{bfjk15, bfjk19, em17,em21} in the general meromorphic case. 

Most of the known examples of entire transcendental maps with Baker (or wandering) domains have been considered by construction of three types: approximation theory, logarithmic lifted method, and quasi-conformal surgery. The type of transcendental maps considered in this work can be obtained through logarithmic lifted method and all of them are of \emph{hyperbolic type} according to Cowen-König classification (see \cite[Section 5]{Rip08}). 

\medskip

The thermodynamic formalism  of the entire 
transcendental dynamics presents unique challenges due to their specific  properties. The Julia set of entire transcendental maps is never compact, differing from polynomial and rational maps. Besides critical points, asymptotic values also emerge as factors to consider. For very general entire transcendental maps, the thermodynamic formalism is still an ongoing area of research and not yet fully established. As a result, it becomes necessary to focus on specific sub-classes of maps to make progress in understanding their properties.

One such subclass is that of hyperbolic entire transcendental maps.
The notion of \emph{hyperbolicity}, in this case, means that $\cP(f) $ is bounded and  $\cP(f)\cap \cJ(f)=\emptyset$, such is the case of functions in  the class $\mathcal B$.

Stallard~\cite{Sta90} introduced  a generalized  notion of hyperbolicity,  defined when $\dist(\cJ(f), \cP(f))~>0,$ with  $\dist$ representing  the Euclidean distance.  This definition extends  the concept of hyperbolicity (or expanding) of rational maps to transcendental maps and is also referred to as \emph{topologically hyperbolic}  and  \emph{$E$-hyperbolic} in the literature. 
 
 %She worked with the class $$\mathscr C=\{f: E\textrm{-hyperbolic entire function}\}$$ and 
%Following~\cite{Sta90}:
If $f$ is  E-hyperbolic  and $z\in \cJ(f)$, then 
$|(f^n)'(z)|\to \infty$, as $n\to \infty$ and the  Lebesgue measure $\meas(\cJ(f))=0.$
 It is clear that a hyperbolic entire transcendental map $f$ is also $E$-hyperbolic, but the reverse is not always true. An example, is  the following entire function provided by Bergweiler in \cite{Ber95}.
\begin{equation}\label{Bergweiler}
f(z)= 2-\log 2+2z-e^z.
\end{equation}
\begin{figure}[ht]
    \centering
    \includegraphics[width=10cm]{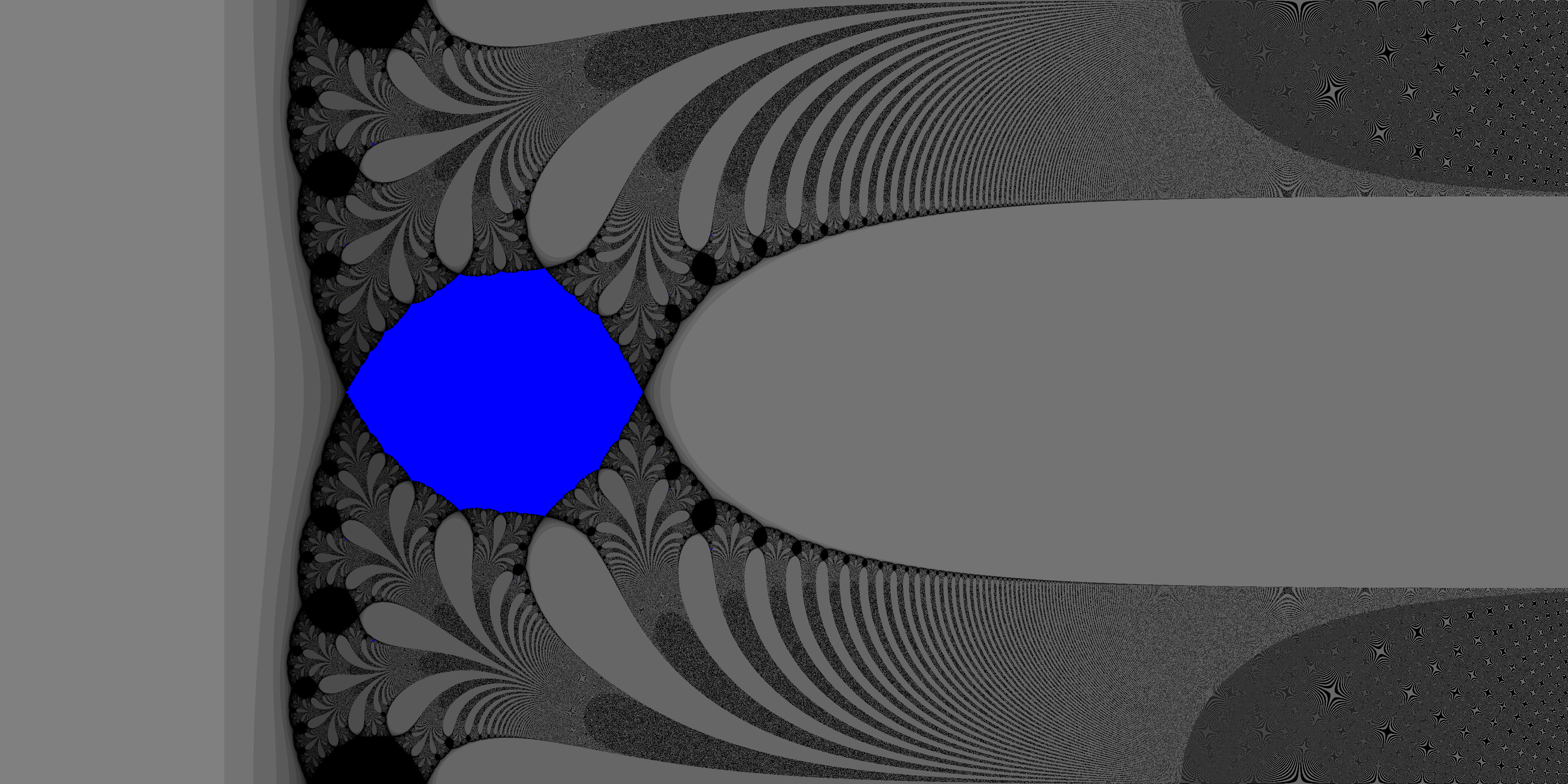}
    \caption{Dynamical plane for Bergweiler's function $f(z)=2-\log 2 +2z-e^z$. The Fatou set consists of the Baker domain (gray-tons), the super-attracting domain (blue) for $z_c=\log2$, and the (vertical) wandering domains (black). }
    \label{fig:enter-label}
\end{figure}
He proved that the function given in (\ref{Bergweiler}) has a (\emph{hyperbolic}) Baker domain
$U$ containing the left-half plane $\{z\in \mathbb C: \Real(z)<-2\}$, such that $\dist(U, \cP(f))>0. $ 
Within the proof of this result, it can be further deduced that 
$\dist(\cJ(f), \cP(f))>0$.
Additionally,  he proved that $\partial U$ is a Jordan curve in $\widehat{\C}$.

\medskip

On the one hand, Mayer and Urba\'nski in~\cite{MU10, MU20} have  made  significant  contributions to the field, by developing the thermodynamic formalism for a comprehensible  class of transcendental meromorphic  dynamically regular functions that exhibit certain derivative growth. The introduction of a suitable Riemannian metric and the application of Nevanlinna's theory led to insights such as the existence and uniqueness of Gibbs states for a class of tame potentials. However, these results might not be  directly applicable  to entire transcendental functions with Baker domains. It is 
%is not included in the development of the thermodynamic formalism in \cite{MU10, MU20}
because for most points $z$ of the Baker domain $|f(z)|$ is no greater than a multiple of $|z|$, and the iterates $f^n(z) $ do not  tend to $\infty$ rapidly.  For a comparative analysis of the growth of entire meromorphic functions that exhibit Baker domains, see~\cite[Theorem 3.1]{Rip08}.

\medskip

On the other hand,  Kotus and Urba\'nski \cite{KU05},  independently developed the  thermodynamic formalism for a class of  one-parameter family which have a (\emph{doubly parabolic}) Baker domain:
\begin{equation}\label{Fatou}
f_\lambda(z)= z+ e^{-z}+\lambda, \textrm{ with } \Real(\lambda)\ge 1,
\end{equation}
  and  potentials $-t\log |F_\lambda'|$, where $F_\lambda$  is the projection  of $f_\lambda$ to the infinite cylinder
  $\mathbb C/\sim $, and  the equivalence relation on  $\mathbb C\times \mathbb C,$ is defined by 
$z\sim w$ if and only if 
$z-w\in 2\pi i \mathbb Z.$

  This family  arises from the Fatou function $f_1(z) = z+e^{-z}+1.  $ See~\cite{Fat26}, which belongs to the Stallard's class. 
%\begin{equation}%\label{Fatou}

%\end{equation}

When we dynamically compare the uni-parametric family $\flc$ given as in (\ref{int_fam_fcl}) with the uni-parametric Fatou family of functions $f_\lambda$ as in (\ref{Fatou}), there are two significant differences. 
\begin{itemize}
    \item The invariant Baker domain in the family $\flc$ is of hyperbolic type, according to the Cowen-König classification, while the invariant Baker domain in the family $f_\lambda$ is of parabolic type I (or doubly parabolic). 
    \item The Fatou set in the family $f_\lambda$ is only composed of the Baker domain and its preimages. The Fatou set of the family $f_{\ell, c}$ is richer because, in addition to the Baker domain, it contains an attracting domain, and two families of wandering domains (and all of their preimages). 
\end{itemize}
Despite these differences, by working in the more friendly quotient space ${\C}/\sim$, the `control' of the post-singular set makes the study quite similar. Which is really the reason we are able to apply the techniques of Kotus and Urba\'nski to the family 
\begin{equation}\label{int_fam_fcl}
f_{\ell, c}: \mathbb C\to \mathbb C, \quad  f_{\ell, c}(z):= c-(\ell-1)\log c+ \ell z- e^z,
\end{equation}
where  $\ell \in \mathbb N$, with $\ell \geq 2 $ and $c$ belongs to the disk $ D(\ell, 1)$ in the complex plane. 

In this  work, we present  a version of the thermodynamic formalism of the entire transcendental family  $f_{\ell, c}$ which, exhibits Baker domains, thus extending the previous results of Kotus and Urba\'nski for the Fatou family.  Specifically, we show the existence and uniqueness of conformal measures and state that the  Hausdorff dimension $\HD(J_r(f))$ is the unique zero of the pressure function $t\to P(t)$, for $t>1,$ where $J_r(f)$ is the radial Julia set.

%Kotus and Uba\'nski,  proved that for the Fatou family $f_\lambda$, the function $\lambda \to \HD(\cJ_r(f_\lambda))$ is real-analytic, where $\cJ_r(f_\lambda)$ 
 %is the set of points of the Julia set of $f_\lambda$  whose real parts  do not escape to infinity  under  positive iterates of 
%$f_\lambda$ and $\HD(\cJ(f_\lambda))$ is the Hausdorff dimension of $\cJ(f_\lambda)$. Moreover,  the  $\HD(\cJ_r(f_\lambda))$-dimensional Hausdorff measure  of the set  $\cJ_r(f_\lambda)$ is positive and finite, whereas  its $\HD(\cJ_r(f_\lambda))$-dimensional packing measure is locally infinite.  This allows  to  have that $\HD(\cJ_r(f_\lambda))<2.$

\section{Dynamics of the indexed family $\flc$}
In this section, following the ideas of Bergweiler in \cite{Ber95}, we determine the dynamical properties of each element of the family $\flc$ as defined in the introduction and the corresponding map $F_{\ell,c}$ in the quotient space $\C/\sim$. 
\subsection{The family $\flc$} 
Fix $\ell\in\N$ with $\ell\geq2$. Given $c\in D(\ell,1)$, consider the entire transcendental map $\flc:\C \to\C$ given by 
\begin{equation}\label{f_c}
    \flc(z) = c-(\ell-1)\log c + \ell z -e^z,
\end{equation}
with 
\begin{equation}\label{deri}
    \flc'(z) = \ell-e^z.
\end{equation}
\begin{theorem}\label{exBaker}
    The function $\flc$ given by (\ref{f_c}) contains a univalent invariant Baker domain $\cU$ with $\dist(\cU,\cP(\flc))>0$. Moreover, the Baker domain $\cU$ is bounded by an analytic curve. 
\end{theorem}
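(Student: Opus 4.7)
The strategy is to adapt Bergweiler's argument in \cite{Ber95} for the special case $\ell=2$, $c=2$, ensuring that all quantitative estimates are uniform for $c\in D(\ell,1)$. For $z$ with $\Real(z)\le -M$ one has $|e^z|\le e^{-M}$, and $\Real(c)$, $|\log c|$ are uniformly bounded on $D(\ell,1)$, so
\[
\Real(\flc(z))\le \Real(c)+(\ell-1)|\log c|+\ell\,\Real(z)+e^{-M}.
\]
For $M$ large enough (independently of $c$) the right-hand side is strictly less than $-M$, so $H_M:=\{\Real z<-M\}$ is forward invariant under $\flc$ and $\Real(\flc^n(z))\to-\infty$ geometrically at rate $\ell$. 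The Fatou component $\cU$ containing $H_M$ is therefore an invariant Baker domain on which $\flc^n\to\infty$ locally uniformly.

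For univalence, on $H_M$ the derivative satisfies $|\flc'(z)-\ell|\le e^{-M}$, so $\Real(\flc'(z)/\ell)>0$ on the convex set $H_M$ and by the Noshiro--Warschawski criterion $\flc|_{H_M}$ is injective. Every orbit in $\cU$ eventually enters $H_M$ (a consequence of the attracting-end structure of a hyperbolic-type Baker domain), so defining inverse branches of $\flc^n$ along such an orbit promotes the injectivity on $H_M$ to univalence of $\flc$ on all of $\cU$.

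For the separation $\dist(\cU,\cP(\flc))>0$, the critical points of $\flc$ are $z_k=\log\ell+2\pi ik$ and the critical values are $v_k=v_0+2\pi ik\ell$ where $v_0=c-(\ell-1)\log c+\ell\log\ell-\ell$. One checks directly that $v_0=\log\ell=z_0$ at $c=\ell$, so the critical point $z_0$ is a superattracting fixed point; by continuity in $c$, for every $c\in D(\ell,1)$ there is an attracting cycle trapping the orbit of each $v_k$ inside a bounded region of the right half-plane, lying uniformly away from $H_M$ and hence from $\cU$.

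The main obstacle is proving that $\partial\cU$ is \emph{analytic}. My plan is to construct it as the limit of the analytic curves $\Gamma_n:=\flc^{-n}(\{\Real z=-M\})\cap\cU$; each $\Gamma_n$ is analytic because $\flc^n$ is univalent on the component $V_n$ of $\flc^{-n}(H_M)\cap\cU$ containing $H_M$ and $\partial H_M$ is a straight line. Since the $V_n$ exhaust $\cU$ and $\dist(\cU,\cP(\flc))>0$ supplies Koebe-type distortion bounds for the inverse branches $\flc^{-n}$ acting inside $\cU$, the analytic curves $\Gamma_n$ converge to a limit curve $\Gamma$ with derivatives under control, and a separate argument identifies $\Gamma$ with $\partial\cU$. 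Alternatively, one can invoke the Cowen--K\"onig classification of hyperbolic-type Baker domains to obtain a univalent Riemann map from a model half-plane onto $\cU$ conjugating $\flc|_\cU$ to $w\mapsto\ell w$, and derive analyticity of $\partial\cU$ from the analytic boundary extension of this Riemann map, again using the distortion bounds coming from the postsingular separation.
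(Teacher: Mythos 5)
Your argument for the invariance of the left half-plane and for the injectivity of $\flc$ on $H_M$ via Noshiro--Warschawski is sound and parallels the paper's first step. However, the rest has two genuine gaps.

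\textbf{The separation estimate is wrong.} You claim that for every $c\in D(\ell,1)$ there is an attracting cycle \emph{trapping the orbit of each $v_k$ inside a bounded region of the right half-plane}. That is false: since $\flc(z+2\pi ik)=\flc(z)+2\pi i\ell k$, one has $\flc^n(v_k)=\flc^n(v_0)+2\pi i\,\ell^{n+1}k$, so for every $k\neq0$ the orbit of $v_k$ escapes to $\infty$ along the imaginary direction. Consequently $\cP(\flc)$ is \emph{unbounded}, and your conclusion ``lying uniformly away from $H_M$ and hence from $\cU$'' also conflates $H_M$ with the (a priori larger) domain $\cU$. The paper handles this by showing that $\log\ell$ lies in the immediate basin $\cU_0$ of the attracting fixed point $\log c$, that each translate $\cU_k=\cU_0+2\pi ik$ is a distinct Fatou component (so $v_k$ lies in a wandering domain $\cU_{\ell k}$), and that $\dist(\cP(\flc),\cU)\geq\dist(\log c,\partial\cU_0)>0$. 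Your ``by continuity in $c$'' does not substitute for that argument, and the bounded-orbit claim must be dropped.

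\textbf{The analyticity of $\partial\cU$ is not actually proved.} Both of your proposals are described as plans rather than proofs: the preimage-curve construction ends with ``a separate argument identifies $\Gamma$ with $\partial\cU$,'' and invoking Cowen--K\"onig plus ``analytic boundary extension of the Riemann map'' is precisely the statement you would have to justify, not a tool you can quote. The paper instead exploits that $\flc$ is the logarithmic lift of $g(w)=c^{-(\ell-1)}w^\ell e^{c-w}$ via $\exp$, so $V=\exp\cU$ is the B\"ottcher domain of $g$ at the superattracting fixed point $0$; since $g\in\mathcal B$, the B\"ottcher coordinate $\phi$ and expansion near $\cJ(g)$ give that the images $\phi(R^{1/\ell^n}e^{i\theta})$ converge uniformly to an analytic parametrization of $\partial V$, and analyticity of $\partial\cU$ follows by pulling back under $\exp$. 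This reduction to a concrete B\"ottcher picture is the idea missing from your write-up, and it is what makes the analyticity claim tractable.
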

\begin{proof}
    From the form of $\flc$, it is clear that $\flc$ has no finite asymptotic values. Then, $\Sing(\flc^{-1})$ consists of the critical values of $\flc$, which by (\ref{deri}) are given by
    \[z_k = \log \ell +2\pi ik,\ k\in\Z.\]
    For each $z\in\C$, $k\in\Z$, we have that $\flc(z+2k\pi i)=\flc(z) + \ell(2k\pi) i$, and inductively 
    \begin{equation}\label{iter}
        \flc^n(z+2k\pi i) = \flc^n(z) + \ell^nk2\pi i,
    \end{equation}
    for $n\in\N$. On the other hand, for all $c\in D(\ell,1)$, $\flc(\log c)=\log c$ with $|\flc' (\log c)| = |\ell-c|<1$. Hence, $z_c=\log c$ is an attracting fixed point (super-attracting for $c=\ell$). Given that $c\in D(\ell,1)$ it follows that $\log \ell$ belongs to the immediate basin of attraction of $z_c$, $\cA^*(z_c)$, i.e. $\flc^n(\log \ell)\to\log c$, as $n\to\infty$. 

    Now, if $\Real(z)<-2\ell$, then 
    \begin{align*}
        \Real(\flc(z)) & = \Real(c) - (\ell-1)\log|c| + \ell\Real(z) - e^{\Real(z)}\cdot \cos (\Ima(z)) \\
            & < \Real(c) + \ell\Real(z) -e^{\Real(z)}\cos(\Ima(z)) \\
            & < \ell+1 + \ell\Real(z)-e^{\Real(z)}\cos(\Ima(z)) \\
            & \leq \ell+1 + \ell\Real(z) + e^{\Real(z)}. 
    \end{align*}
    Hence, 
    \begin{align*}
        \Real(\flc(z))     & < \ell+1 - 2\ell^2 + 1
            \leq 2\ell -2\ell^2 
             < -2\ell.
    \end{align*}
    
    This, together with the fact that $\flc(z) = c - (\ell-1)\log c + \ell z + o(1)$, as $\Real(z)\to -\infty$, implies that the half plane $\{\Real(z)<-2\ell\}$ is contained in an invariant Baker domain $\cU$ of $\flc$. 
    
    Inductively, we can prove the following, 
    \begin{equation}
        \flc^n(z) = \left(\dfrac{\ell^n-1}{\ell-1}\right)(c-(\ell-1)\log c) + \ell^nz - \sum_{j=0}^{n-1}\ell^{n-j-1}\exp(\flc^j(z)).
    \end{equation}
    Considering $h_n(z) = \flc^n(z)/\ell^n$ we may deduce that $h_n(z)$ converge as $n$ tends to infinity if $\Real(z)<-2\ell$. Also,
    \begin{equation}
        \lim_n h_n(z) = \Big(\dfrac{1}{\ell-1}\Big)(c - (\ell-1)\log c) + z - \sum_{j=0}^{n}2^{-j-1}\exp(\flc^j(z)) \sim \Big(\dfrac{1}{\ell-1}\Big)(c - (\ell-1)\log c) + z,
    \end{equation}
    as $\Real(z)\to -\infty$.

    Since $\flc$ is an entire map, it follows from \cite[Theorem 1]{Bak2} that $\cU$ is simply connected, and from \cite[Lemma 7]{Bak3} that if $K\subset \cU$ is a compact set, there exists $C>0$ such that $|f^n(z)|\leq C|f^n(z')|$ for $n$ sufficiently large, and every $z,\ z'\in K$. Hence, $\{h_n\}$ is locally and uniformly bounded in $\cU$, and so is normal in $\cU$. This way, $h_n(z)$ converge not only in $\{\Real(z)<-2\ell\}$ but for all $z\in\cU$. 

    Let $\cU_0\subset\cF(\flc)$ be the attracting component of $z_c=\log c$ containing $z_c$. For $k\in\Z$ we define $\cU_k=\{z+2\pi ik:\ z\in\cU_0\}$ and deduce from (\ref{iter}) that 
    \[
    \flc^n(z) = \log c + \ell^nk2\pi i + o(1),
    \]
    locally and uniformly in $\cU_k$ as $n\to\infty$. Which conclude that $\cU_k\subset \cF(\flc)$. Moreover, $h_n\to2\pi ki$ locally and uniformly in $\cU_k$, that is, $h_n\big|_{\cU_k}$ converge to a constant limit as $n\to\infty$. Which means that $\cU_k\cap\cU_0=\emptyset$ for every $k\in\Z\setminus\{0\}$. This also implies that 
    \[
    \text{dist}(\cP(\flc),\cU)\geq\text{dist}(\log c,\partial\cU_0)>0.
    \]
    To prove that the boundary of the Baker domain is an analytic curve in $\C$, we proceed as in \cite{Ber95}.
    
    First, note that $\flc$ is actually the \emph{logarithmic lift} of the analytic function $g:\C^*\to\C^*$, $g(z) = \dfrac{1}{c^{\ell-1}}z^\ell e^{c-z}$, which implies that $e^z\in\cJ(g_c)$ if and only if $z\in\cJ(\flc)$. 

    Let $V$ the Böttcher domain of $g_c$ containing $0$. Then $V=\exp \cU$, this way, it is enough to prove that $\partial V$ is an analytic curve. 

    We know that $\cP(g_c)=\cO^+(\ell)\cup\{0\}$. It is easy to see from this that $\text{dist}(\cP(g_c),\cJ(g_c))>0$. Let $\phi(z)=c^{\ell-1}e^{-c}z + O(z^\ell)$ the solution to the Böttcher functional equation $\phi(z^\ell)=g_c(\phi(z))$ near 0. Then $\phi$ is a conformal map from the unit disc onto $V$. Take $R$ a fixed number satisfying $0<R<1$ and define
    \[
    \gamma_n(e^{i\theta}) = \phi\left(R^{1/\ell^n e^{i\theta}}\right),
    \]
    for $n\in\N$ and $0\leq\theta\leq 2\pi$. Note that $g_c$ actually belongs to sub-class $\mathcal{B}$, so the local \emph{hyperbolicity} (see \cite{CG} and \cite{Ste}), allows proving that $\gamma_n(e^{i\theta})$ converge uniformly as $n\to\infty$, and the limit function maps the unit circle onto $\partial V$. 

\end{proof}

From the proof, given the control on the post-singular set, we obtain the following characterization of the Fatou set for each function in the family. 

\begin{Corollary}
    For $\flc$ as defined above, the Fatou set consists of the following sets: 
    \begin{itemize}
        \item A univalent invariant Baker domain $\cU_\ell$ and all of its preimages. 
        \item A simply connected (Böttcher) Schroeder domain $\cU_0$ for the (super-)attracting fixed point $z_{c}=\log c$ and all of its preimages. 
        \item Two sets of wandering domains from the $2k\pi i$-translation of the attracting domain $\cU_0$ and all of its preimages. 
    \end{itemize}
\end{Corollary}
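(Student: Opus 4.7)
The plan is to reduce the classification of $\cF(\flc)$ to that of $\cF(g_c)$ for the factor map $g_c\colon \C\to\C$, $g_c(w)=c^{1-\ell}w^{\ell}e^{c-w}$, introduced in the proof of Theorem~\ref{exBaker}, and then lift the answer through the covering $\exp\colon\C\to\C\setminus\{0\}$. The key reason this helps is that $g_c$ has only finitely many singular values: a direct differentiation gives $g_c'(w)=c^{1-\ell}w^{\ell-1}(\ell-w)e^{c-w}$, so the critical points are $0$ (of multiplicity $\ell-1$) and $\ell$, and the only finite asymptotic value is $0$ (since $g_c(z)\to 0$ as $\Real(z)\to +\infty$ along any curve where $e^{c-z}$ dominates the polynomial factor, while $g_c(z)\to\infty$ in the other direction). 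Hence $\Sing(g_c^{-1})=\{0,g_c(\ell)\}$ is finite, so $g_c$ lies in the Speiser class $\mathcal{S}$.

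Once $g_c\in\mathcal{S}$ is in hand, the no-wandering-domains theorem of Goldberg--Keen and Eremenko--Lyubich forces every Fatou component of $g_c$ to be preperiodic, and the Fatou--Sullivan classification pins down the periodic ones. The fixed point $0$ of $g_c$ is super-attracting and absorbs the critical value $0$; the fixed point $c$ is attracting with multiplier $g_c'(c)=\ell-c$, $|\ell-c|<1$, and absorbs $g_c(\ell)$, because Theorem~\ref{exBaker} shows $\log\ell\in\cA^*(z_c)$ for $\flc$, which projects under $\exp$ to $g_c(\ell)$ being attracted to $c$. With both singular values allocated to these two basins, Fatou--Sullivan leaves no room for any further attracting, parabolic, Siegel, or Herman components, and Baker domains of $g_c$ are ruled out because $g_c\in\mathcal{B}$. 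Therefore $\cF(g_c)=V_0\sqcup V_c\sqcup(\text{iterated preimages})$, where $V_0$ and $V_c$ are the two simply connected immediate basins.

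Finally I would lift. Since $V_0$ is simply connected and contains the omitted value $0$ of $\exp$, the punctured region $V_0\setminus\{0\}$ is an annulus and $\exp^{-1}(V_0)$ is connected; it must therefore coincide with the invariant Baker domain $\cU_\ell$ of Theorem~\ref{exBaker}. Since $V_c$ is simply connected and avoids $0$, $\exp^{-1}(V_c)$ splits into a disjoint union of translates $\cU_0+2\pi i k$, $k\in\Z$, which are exactly the $\cU_k$ identified in the proof of Theorem~\ref{exBaker}; the component $\cU_0$ is the attracting basin of $z_c=\log c$, while for $k\neq 0$ the inclusions $\flc(\cU_k)\subset\cU_{\ell k}$ make $\cU_k$ wandering, with the two orbit families $k>0$ and $k<0$ accounting for the two sets of wandering domains in the statement. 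Iterated $\flc$-preimages of these three types of components yield all the remaining Fatou components.

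The step I expect to be most delicate is the verification that $g_c$ really belongs to $\mathcal{S}$ --- in particular, that $0$ is the \emph{only} finite asymptotic value, which requires a careful analysis of $g_c$ along arbitrary tracts to infinity rather than the horizontal directions alone --- together with the correct citation of the no-wandering-domains theorem for Speiser-class maps. Once that input is secured, the Fatou--Sullivan bookkeeping and the lifting argument are both routine.
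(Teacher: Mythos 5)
Your proposal is correct and takes essentially the same route the paper uses implicitly: classify the Fatou set of the factor map $g_c$ obtained by projecting via $\exp$ (which is exactly how the proof of Theorem~\ref{exBaker} controls the post-singular set, noting $V=\exp\cU$ and $\cP(g_c)=\cO^+(\ell)\cup\{0\}$), then lift back through the exponential semiconjugacy. You have simply made explicit the ingredients the paper compresses into the phrase ``from the proof, given the control on the post-singular set'' --- namely $g_c\in\mathcal{S}$, the no-wandering-domains theorem, the Fatou--Sullivan accounting of the two singular values, and the connectivity analysis of $\exp^{-1}(V_0)$ and $\exp^{-1}(V_c)$.
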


The following result complements the structure and relation of the Baker domain and the post-singular set. 

\begin{theorem}[\cite{Ber95}, Theorem 3]
    Let $f$ be an entire transcendental function with an invariant Baker domain $U$. If $U\cap \emph{sing}(f^{-1})=\emptyset$, then there exists a sequence $\{p_n\}$ such that $p_n\in\cP(f)$, $|p_n|\to\infty$, $|p_{n+1}/p_n|\to1$, and $\emph{dist}(p_n, U)=o(|p_n|)$ as $n\to\infty$.
\end{theorem}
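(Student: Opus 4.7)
The plan is to argue by contradiction: assume no sequence in $\cP(f)$ satisfies all three conclusions simultaneously. Since an entire transcendental map in class $\mathcal{B}$ admits no Baker domain (Eremenko--Lyubich), $\Sing(f^{-1})$ is unbounded and so is $\cP(f)$. Consequently the only ways the conclusion can fail are an angular obstruction, $\dist(p, U) \geq \delta |p|$ for every $p \in \cP(f)$ with $|p|$ large and some fixed $\delta > 0$, or a radial obstruction, along which successive ratios $|p_{n+1}/p_n|$ remain bounded away from $1$.

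The essential structural input is that $U \cap \Sing(f^{-1}) = \emptyset$ upgrades $f \colon U \to U$ to an unbranched covering, so every iterate $f^n$ admits univalent inverse branches on any simply connected domain disjoint from $\cP(f)$. Fix $z_0 \in U$ and let $z_n = f^n(z_0) \to \infty$. In the angular case, for $n$ sufficiently large the disk $D(z_n, \tfrac{\delta}{2}|z_n|)$ misses $\cP(f)$, so the inverse branch $g_n$ of $f^n$ mapping $z_n$ to $z_0$ extends univalently there, with image contained in $U$. Koebe's $1/4$-theorem gives $g_n(D(z_n, \tfrac{\delta}{2}|z_n|)) \supset D(z_0, c\,|g_n'(z_n)|\, |z_n|)$ for a universal constant $c$; since these images all lie in the fixed plane domain $U$, normality compels $|g_n'(z_n)|\,|z_n|$ to stay bounded, hence $|(f^n)'(z_0)| \gtrsim |z_n|$. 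Transferring this derivative bound back via Koebe's distortion theorem would yield a Euclidean disk around $z_n$ of radius proportional to $|z_n|$ contained entirely in $U$; a Wiman--Valiron or Ahlfors Five-Islands argument applied to $f$ at points of maximal modulus then rules out such a large round disk in $\cF(f)$ at every sufficiently large scale, producing the contradiction.

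The radial-gap case reduces to the angular one by passing to the logarithmic coordinate $w = \log z$, in which a fixed multiplicative radial gap of $\cP(f)$ becomes a horizontal strip disjoint from $\log \cP(f)$, where the same Koebe-based argument can be replayed on the appropriate lifted dynamics. The main obstacle will be the transition from the normality-derived derivative lower bound to a genuine geometric contradiction: one must use the transcendental growth of $f$ (not merely the internal dynamics on $U$) to locate points of $\cJ(f)$ inside the hypothetical large univalent disk, and this is exactly where the Wiman--Valiron or Five-Islands input is crucial and must be invoked at the correct scale, since purely hyperbolic-geometric considerations within $U$ are insufficient to close the argument.
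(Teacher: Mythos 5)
The paper does not prove this statement; it is quoted directly as Theorem~3 of Bergweiler \cite{Ber95}, so there is no internal proof to compare against, and I will evaluate your argument against Bergweiler's original one.

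The decisive error is the assertion that $g_n\bigl(D(z_n,\tfrac{\delta}{2}|z_n|)\bigr)\subset U$. This fails as soon as $\tfrac{\delta}{2}|z_n|$ exceeds $\dist(z_n,\partial U)$: the branch $g_n$ maps $D(z_n,\tfrac{\delta}{2}|z_n|)\cap U$ into $U$ (this set is connected, lies in $f^{-n}(U)$, and contains $z_0\in U$, so it stays in the component $U$), but it sends the part of the disk lying outside $U$ into $\C\setminus U$, since $f^n(U)\subset U$. Thus the Koebe disk around $z_0$ is not inside $U$, and the normality step you rest on it (``these images all lie in the fixed plane domain $U$'') collapses; without it there is no bound $|(f^n)'(z_0)|\gtrsim|z_n|$. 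What is true, and what you should have used, is that $g_n\bigl(D(z_n,\tfrac{\delta}{2}|z_n|)\bigr)$ omits $\cP(f)$: if $z\in\cP(f)$ then $f^n(z)\in\cP(f)$ by forward invariance, so $f^n(z)\notin D(z_n,\tfrac{\delta}{2}|z_n|)$. Since $\cP(f)$ is unbounded (your Eremenko--Lyubich observation is correct) it certainly has two points, and Montel then gives normality of the rescaled branches and recovers the same derivative estimate --- but from omission of $\cP(f)$, not from containment in $U$.

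Even after this repair the argument does not close, as you acknowledge (``the main obstacle will be the transition\ldots''). The appeal to Wiman--Valiron or the Five Islands Theorem is a gesture rather than a proof, and it is also the wrong tool: Bergweiler's argument stays entirely within hyperbolic geometry, playing the Schwarz--Pick contraction of $f\colon U\to U$ along the orbit $\{z_n\}$ against a lower estimate for the hyperbolic density of $\C\setminus\cP(f)$ at $z_n$, which the assumed $\delta|z_n|$-gap makes of order $1/|z_n|$; no value-distribution theory is invoked. Two further points: the dichotomy of an ``angular obstruction'' versus a ``radial obstruction'' does not cleanly exhaust the negation of the three simultaneous conclusions asserted by the theorem, and the proposed reduction of the radial case to the angular one via $w=\log z$ is only asserted --- $f$ does not in general lift to a self-map of a half-plane under $\log$, so the ``lifted dynamics'' on which you intend to replay the Koebe argument is not defined.
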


%\begin{theorem}
 %   Fatou set
%\end{theorem}
\subsection{Dynamics in the quotient $\C/\sim$}\label{dinamicacociente}
In this section, we proceed as in \cite{KU05}. We consider the open infinite strip 
\[
P:= \{z\in\C : 0<\Ima(z)<2\pi\}.
\]
Abusing the notation, we will think on $P$ as a subset of the infinite cylinder $Q=\C/\sim$, where the equivalence relation $\sim$ is defined on $\C\times\C$ as $w\sim z$ if and only if $w-z\in 2\pi i\Z$. The quotient space $\C/\sim$ is the infinite cylinder with the Riemann surface structure endowed by the canonical quotient mapping 
\[
\Pi : \C\to \ Q. 
\]
It follows directly that the map $\flc:\C \to \C$ respects the equivalence relation $\sim$, since 
\begin{align*}
    \flc(z+2k\pi i) & = c - (\ell-1)\log c + \ell(z+2k\pi i) - e^{z+2k\pi i} \\ 
      & = c - (\ell-1)\log c + \ell z - e^z + \ell(2k\pi i) \\
      & =\flc(z) + \ell(2k\pi i).
\end{align*}
Hence, $\flc$ induces a unique map 
\[
F_{\ell,c} : Q \to Q, 
\]
such that $F_{\ell,c}\circ \Pi = \Pi\circ \flc$. From now on, we will omit the suffix on both functions. 

Given $M\geq 0$, $D\subset\C$, and $E\subset Q$, we set 
\[
D_M=\{z : 0\leq \Real(z)\leq M\} \qquad \text{and} \qquad E_M=\{z\in Q: 0\leq \Real(z)\leq M\}. 
\]
Also, we take $D_M^c = \C\setminus D_M$, and $E_M^c=Q\setminus E_M$. 

\begin{Lemma}\label{biyectividad}
    The map $f:P\to \C$ is a bijection. 
\end{Lemma}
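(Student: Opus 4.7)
The plan is to combine the absence of critical points in $P$ with a boundary analysis and an argument-principle degree count on truncated rectangles. Write $a = c-(\ell-1)\log c$ so that $f(z)=a+\ell z-e^z$.

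First I would observe that $f'(z)=\ell-e^z$ vanishes exactly at the points $z_k=\log\ell+2\pi ik$, $k\in\Z$, each of which lies on $\partial P$. Hence $f|_P$ is locally biholomorphic, and to prove the lemma it suffices to show this local homeomorphism has topological degree one onto its image. I would next record the boundary behavior: on the real axis $f$ is real-valued, increasing on $(-\infty,\log\ell]$ up to the fold value $M:=a+\ell\log\ell-\ell$ and decreasing on $[\log\ell,\infty)$ back to $-\infty$, so $f(\R)=L_0:=(-\infty,M]$ is doubly covered; since $f(z+2\pi i)=f(z)+2\pi i\ell$, one also gets $f(\R+2\pi i)=L_{2\pi\ell}:=L_0+2\pi i\ell$. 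Thus the topological boundary image $f(\partial P)$ is the pair of parallel horizontal half-lines $L_0\cup L_{2\pi\ell}$, which is a set of Lebesgue measure zero (the statement of the lemma therefore has to be read as $f|_P$ being a bijection onto $\C$ modulo this negligible set, as is standard in this setting).

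Next I would prove properness of $f|_P\colon P\to \C\setminus(L_0\cup L_{2\pi\ell})$. If $z_n\in P$ and $f(z_n)\to w\notin L_0\cup L_{2\pi\ell}$, then $\Real(z_n)\to+\infty$ is excluded because $|e^{z_n}|=e^{\Real z_n}$ dominates the polynomial term $\ell z_n$ and forces $|f(z_n)|\to\infty$; $\Real(z_n)\to-\infty$ is excluded because $e^{z_n}\to 0$ and $|\ell z_n|\to\infty$; and $z_n\to\partial P$ is excluded since continuity would give $w\in f(\partial P)=L_0\cup L_{2\pi\ell}$. So a subsequence converges inside $P$, and $f|_P$ is proper. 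Since $P$ is simply connected, a proper local homeomorphism of $P$ onto a connected subset of $\C\setminus(L_0\cup L_{2\pi\ell})$ is a covering, hence a homeomorphism onto its image, once we verify that the covering degree is~$1$.

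For the degree count I would use the argument principle on the truncated rectangle $P_R=P\cap\{|\Real z|<R\}$ traversed counterclockwise. Fix $w_0\in\C\setminus(L_0\cup L_{2\pi\ell})$ and, for $R$ large, count the preimages of $w_0$ in $P_R$ by the winding number of $f(\partial P_R)$ around $w_0$. I would split the contribution by the four sides: the bottom and top sides map into $L_0$ and $L_{2\pi\ell}$ as folded segments and together contribute $0$ to the winding around any $w_0$ off these half-lines; the left side $\{-R+iy:y\in[0,2\pi]\}$ maps, up to an $O(e^{-R})$ perturbation, to the vertical segment from $a-\ell R+2\pi i\ell$ down to $a-\ell R$; and the right side $\{R+iy:y\in[0,2\pi]\}$ maps, up to an $O(R)$ perturbation of the dominant term $-e^Re^{iy}$, to an approximate circle of radius $e^R$ traversed counterclockwise. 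For $R$ large enough that $w_0$ lies in the bounded region that this curve encloses, the winding number is exactly $+1$. Passing $R\to\infty$ gives uniqueness and existence of a preimage in $P$, and yields $f(P)=\C\setminus(L_0\cup L_{2\pi\ell})$.

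The main obstacle is the rigorous execution of the winding-number computation: one needs uniform control of the $O(R)$ corrections to the dominant $-e^{R+iy}$ on the right side, and one must verify that the folds on the top and bottom do not contribute spurious winding near the ``corners'' of $\partial P_R$, where the boundary transitions from a fold back onto the near-circle. This is where most of the technical work goes; the existence and properness parts, and the local biholomorphism, are essentially immediate.
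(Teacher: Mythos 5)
Your proposal takes a genuinely different route from the paper: the paper proves injectivity by splitting $P$ along $\Ima z=\pi$ and applying the half-plane criterion for the derivative (Noshiro--Warschawski) on the two convex substrips, whose images are separated by the level $\Ima w=\Ima\lambda+\ell\pi$ (writing $\lambda=c-(\ell-1)\log c$), and then argues surjectivity by claiming $\partial f(P)\subset f(\partial P)\subset f(P)$, hence $f(P)=\C$; you instead use local biholomorphy, properness and an argument-principle degree count. The substantive divergence is at surjectivity, and there your computation of the image is the correct one: $f(P)$ really does omit the two boundary rays. Indeed, with $f=\lambda+g$, $g(z)=\ell z-e^z$, a point $z=x+iy\in P$ has $\Ima g(z)=0$ only if $y\in(0,\pi)$ and $e^x=\ell y/\sin y$, and on that curve $\Real g(z)=h(y):=\ell\bigl(\log(\ell y/\sin y)-y\cot y\bigr)$ satisfies $h'(y)=\ell\,\frac{(\sin y-y\cos y)^2+y^2\sin^2 y}{y\sin^2 y}>0$ and $h(0^+)=\ell\log\ell-\ell$, so $g(P)\cap\R=(\ell\log\ell-\ell,+\infty)$ while $g(\R)=(-\infty,\ell\log\ell-\ell]$; the same computation (via $y\mapsto 2\pi-y$) handles the upper ray. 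So no point of $f(\R)\cup f(\R+2\pi i)$ is attained on the open strip, the lemma as stated (onto $\C$) is false, and the flawed step in the paper is precisely the inclusion $f(\partial P)\subset f(P)$: the claim $f(x)\in f(P^-)$ is even inconsistent with the paper's own bound $\Ima f>2\pi+\Ima\lambda$ on $P^-$, since $\Ima f(x)=\Ima\lambda$ for real $x$. Be aware, though, that your reading ``bijection modulo a negligible set'' is not how the lemma is used later ($f_*^{-1}$ is invoked on all of $\C$), so the correct statement is that $f:P\to\C\setminus\bigl(f(\R)\cup f(\R+2\pi i)\bigr)$ is a biholomorphism, and downstream constructions must restrict $f_*^{-1}$ accordingly; also, for nonreal $c$ the map is not real on $\R$, so $L_0$ and $L_{2\pi\ell}$ are horizontal rays at heights $\Ima\lambda$ and $\Ima\lambda+2\pi\ell$, not subsets of $\R$.

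Inside your own argument there is one genuine gap beyond the winding-number bookkeeping you already flag: you never prove that $f(P)$ avoids $L_0\cup L_{2\pi\ell}$, yet you use this when you declare the target of the proper map to be $\C\setminus(L_0\cup L_{2\pi\ell})$ and when you deduce injectivity on all of $P$. Properness plus the degree count only show that $f$ restricted to $P\cap f^{-1}\bigl(\C\setminus(L_0\cup L_{2\pi\ell})\bigr)$ is a degree-one covering of the slit plane; a priori some interior points could still map onto the rays and would escape that conclusion. Since this is exactly the content separating your statement from the paper's, it must be argued, either by the explicit monotonicity computation above, or locally: if $w_0$ is an interior point of $L_0$, its two boundary preimages $x_1<\log\ell<x_2$ satisfy $f'(x_1)>0$ and $f'(x_2)<0$, so points of the slit plane just above (resp.\ just below) $L_0$ near $w_0$ already have a preimage in $P$ near $x_1$ (resp.\ $x_2$); an additional preimage $z_0\in P$ of $w_0$ would, by the open mapping theorem and the absence of critical points in $P$, produce a second preimage on one side, contradicting degree one. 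With that added, and with the routine (if tedious) control of the corner contributions in the contour computation, your argument is sound; the paper's injectivity proof is the more elementary alternative for that half of the statement, but its surjectivity claim should not be taken over.
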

\begin{proof}
    We proceed by cases in the following partition of the strip $P$. Take 
    \[
    P_-:=\{z\in P : z<\Ima(z)<\pi\}, \qquad \text{and}\qquad P^-:=\{z\in P : \pi<\Ima(z)<2\pi\}.
    \]
    If $z\in P_-$, then $\sin(\Ima(z))>0$, hence 
    \begin{align*}
        \Ima(f(z)) & = \Ima(c) - (\ell-1)\arg c + \ell\Ima(z) - e^{\Real(z)}\sin(\Ima(z)) \\
           & < 2\pi + \Ima(c) - (\ell-1)\arg c \\
           & = 2\pi + \Ima(c-(\ell-1)\log c). 
    \end{align*}

    Similarly, if $z\in P^-$, then $\Ima(f(z))>2\pi + \Ima(c-(\ell-1)\log c)$. 
    Now, if $z\in P_-$, then $\Ima(f'(z))=-e^{\Real(z)}\sin(\Ima(z))<0$. 
    Since, $P_-$ is a convex subset, we concluded that 
     $f\big|_{P_-}$ 
    is injective. Analogously, if $z\in P^-$ then $\Ima(f'(z))>0$ and $f\big|_{P^-}$ is injective. Therefore, $f\big|_{P_-\cup P^-}$ is injective.

    On the other hand, if $\Ima(z)=\pi$, we have 
    \[
    \Ima(f(z)) = \Ima(c-(\ell-1)\log c) + \ell\pi i,
    \]
    and
    \[
    f(x+\pi i) = c-(\ell-1)\log c + \ell x + \ell\pi i + e^x, 
    \]
    considered as a function $f:\R\to\R$ (up to $c-(\ell-1)\log c + \ell\pi i$), we have 
    \[
    f(x+\pi i) - (c - (\ell-1)\log c) - \ell\pi i = \ell x + e^x, 
    \]
    and
    \[
    f'(x) = \ell + e^x >0
    \]
    so $f\big|_{\Ima(z)=\pi}$ is also injective. It follows that $f:P\to \C$ is injective. Finally, note that $f(x) = c-(\ell-1)\log \ell + \ell x - e^x \in f(P^-)$, and $f(x+2\pi i) = c-(\ell-1)\log c + \ell x + 4\pi i - e^x \in f(P_-)$ imply that $\partial(f(P))\subset f(\partial P)\subset f(P)$ and then $\partial f(P)=\emptyset$. This way, $f(P)=\C$ which concludes the proof. 
\end{proof}
To simplify notation, we define $\lambda:=\lambda(c) = c-(\ell-1)\log c$, and then $f(z) = f_\lambda(z)$. 
%\begin{proposition}
%    The Julia set $\cJ(f)$ contains a Cantor Bouquet-type subset in a right-hand half-plane. 
%\end{proposition}
%\begin{proof}
%    It follows from (\ref{f_c}) that, 
%    \[f(x+2k\pi i) = \lambda + \ell x - e^x + 2\ell k\pi i,\ \Rightarrow \ \Real(f(x+2k\pi i) = \Real(\lambda) + \ell x - e^x. \]
%    Therefore, there exists $M_1>0$ such that if $x\geq M_1$, then $\Real(f(x+2k\pi i))<-2\ell$. We have 
%    \[
%    \{x+2k\pi i: x\geq M_1\}\subset f^{-1}(\cU)\}. 
%    \]
%    Since $f$ is an open map, there exists $\varepsilon_1>0$ such that 
%    \[
%    \{x+iy i: x\geq M_1,\ 2k\pi i-\varepsilon_1 <y<2k\pi i + \varepsilon_1\}\subset f^{-1}(\cU)\},
%    \]
%    for every $k\in\Z$. Besides, since 
%    \[
%    \Ima(f(z)) = \Ima(\lambda) + \ell\Ima(z) -e^{\Real(z)}\sin(\Ima(z)), 
%    \]
%    and
%    \[
%    \Real(f(z)) = \Real(\lambda) + \ell\Real(z) - e^{\Real(z)}\cos(\Ima(z)),
    %\]
%    for $z\in\C$ such that $\cos(\Ima(z))<0$, then $\Real(f(z))>\Real(\lambda)+\ell\Real(z)$, hence there exists $M_2>0$ such that, if $\Real(z)>M_2$ then $\Real(f(z))\geq M_1$. So, by iterating backward the half-plane $\{\Real(z)<-2\ell\}$, it follows that there exists $M_\infty$ such that 
%    \[\cJ(F)\cap \{\Real(z)>M_\infty\}\]
%    is a \emph{Cantor Bouquet-type} subset. 
%\end{proof}
The following is the first result to show the \emph{expanding} character of the family. For this, we recall that the post-critical set of $F$, denoted by $\PC(F)$ is given by 
\begin{equation}\label{Postcritico}
\PC(F) = \overline{\{F^n(\Pi(\log \ell)):n\geq 0\}}.
\end{equation}
From the form of $\flc$ in (\ref{f_c}), it is clear that $\flc$ (and hence $F_{\ell,c}$) has no finite asymptotic values. We denote by $f_*^{-1}:\C\to P$ the (holomorphic) inverse map to the bijection $f:P\to \C$. First, we state the following weaker (than expanding) property of the map $F$. 
\begin{Lemma}\label{limsup}
    If $z\in \cJ(F)$, then $\limsup_{n\to\infty}|(F^n)'(z)|=+\infty$.
\end{Lemma}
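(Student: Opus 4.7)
My plan is to argue by contradiction: suppose $|(F^n)'(z)|\leq M$ for every $n\geq 0$, and derive that $\{F^n\}$ is normal on a fixed disc around $z$, contradicting $z\in\cJ(F)$.

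First, I would set up inverse branches. By the proof of Theorem~1 the orbit of the critical point $\log\ell$ converges to the (super-)attracting fixed point $\log c$, so $\PC(F)$ is a compact subset of the attracting Fatou component $\cU_0$ and $\delta_0:=\dist(\PC(F),\cJ(F))>0$. Set $\delta:=\tfrac12\min(\delta_0,\pi)$. Since $F$ has no finite asymptotic values and all critical values of $F^n$ lie in $\PC(F)$, each round disc $B(F^n(z),\delta)\subset Q$ is simply connected, avoids $\PC(F)$, and carries a univalent inverse branch $\phi_n$ of $F^n$ with $\phi_n(F^n(z))=z$. Write $V_n:=\phi_n(B(F^n(z),\delta))$.

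The main obstacle, in my view, is extracting a uniform \emph{lower} bound on $|(F^n)'(z)|$ from the cylinder geometry. Lifting $\phi_n$ through the universal cover $\Pi\colon\C\to Q$ and applying the Koebe $1/4$-theorem gives
\[
V_n \supset B\!\left(z,\ \frac{\delta}{4\,|(F^n)'(z)|}\right).
\]
The observation I would use is that a simply connected subset of the cylinder $Q$ cannot contain a cylinder-ball of radius larger than the injectivity radius $\pi$: if the radius $r$ exceeded $\pi$, the Euclidean balls $B_{\C}(\tilde z,r)$ and $B_{\C}(\tilde z+2\pi i,r)$ would overlap inside $\Pi^{-1}(V_n)$; but $\Pi^{-1}(V_n)$ is a disjoint union of sheets, each mapped homeomorphically onto $V_n$ by $\Pi$, and these two balls must sit in distinct sheets (containing the distinct lifts $\tilde z$ and $\tilde z+2\pi i$), a contradiction. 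Since $V_n$ is the univalent image of a disc it is simply connected, so $\delta/(4|(F^n)'(z)|)\leq \pi$, i.e.\ $|(F^n)'(z)|\geq \delta/(4\pi)$ for every $n$.

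With both a uniform upper bound and a uniform lower bound on $|(F^n)'(z)|$, the chain rule yields $|F'(F^n(z))|=|(F^{n+1})'(z)|/|(F^n)'(z)|\leq 4\pi M/\delta$; since $F'(w)=\ell-e^w$, this forces $\Real(F^n(z))$ to be bounded above by an absolute constant. On the other hand, the Baker domain $\cU\supset\{\Real(w)<-2\ell\}$ lies in $\cF(F)$ while $F^n(z)\in\cJ(F)$, so $\Real(F^n(z))\geq -2\ell$. Hence the orbit $\{F^n(z)\}$ stays in a fixed compact set $K\subset Q$. To close, take $D:=B(z,\delta/(4M))$: the Koebe inclusion yields $D\subset V_n$ for every $n$, so $F^n|_D$ is univalent with $F^n(D)\subset B(F^n(z),\delta)$, which sits in the fixed compact $\delta$-neighborhood of $K$. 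The family $\{F^n|_D\}$ is therefore locally uniformly bounded in $Q$ and normal by Montel's theorem, forcing $z\in\cF(F)$ — the desired contradiction.
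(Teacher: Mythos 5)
Your argument is correct, and it is the standard normal-family contradiction that Kotus--Urba\'nski use (the paper defers the proof entirely to \cite[Lemma~2.2]{KU05}, so you had to reconstruct it from scratch). The setup is exactly right: $\PC(F)$ is compact and sits inside the attracting basin of $\Pi(\log c)$, so it is a positive distance from $\cJ(F)$, the $\delta$-discs around $F^n(z)$ carry univalent inverse branches $\phi_n$, and Koebe's $1/4$-theorem plants a definite disc $D=B(z,\delta/(4M))$ inside every $V_n$, so that $F^n(D)\subset B(F^n(z),\delta)$.

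The one step worth commenting on is your uniform lower bound $|(F^n)'(z)|\geq\delta/(4\pi)$, obtained from the injectivity radius of the cylinder. It is correct: $\tilde V_n:=\widetilde\phi_n(B_\C(\tilde w,\delta))$ is simply connected and a single sheet over $V_n$, hence $\tilde V_n\cap(\tilde V_n+2\pi i)=\emptyset$, and the Koebe disc $B_\C(\tilde z,\delta/(4|(F^n)'(z)|))\subset \tilde V_n$ must therefore have radius $\leq\pi$. This lower bound is what lets you turn the chain rule into an a priori upper bound on $\Real(F^n(z))$ and hence confine the orbit to a compact subset of $Q$. That is a clean way to finish. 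Note, though, that you could also skip the orbit bound entirely and argue directly on $\hat Q\cong\CC$: given any subsequence $\{F^{n_k}\}$, either $\Real(F^{n_k}(z))$ has a bounded sub-subsequence, in which case $F^{n_k}(D)\subset B(F^{n_k}(z),\delta)$ lies in a fixed compact set and Montel applies, or $\Real(F^{n_k}(z))\to+\infty$, in which case $F^{n_k}|_D\to\infty$ locally uniformly in the spherical metric; in either case normality follows, and this version does not need the injectivity-radius estimate. Both routes are valid; yours proves the slightly stronger fact that a bounded derivative sequence along the orbit actually forces the orbit into a compact set.
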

\begin{proof}
    Proof follows verbatim from \cite[Lemma 2.2]{KU05}.
\end{proof}

%We are ready to prove that 
That $F$ is expanding on its Julia set follows below.
\begin{proposition}\label{L:Expansion}
    There exist $L>0$ and $\kappa>1$ such that   for every $z\in\cJ(F)$ and every $n\geq1$, 
    \[
    |(F^n)'(z)|\geq L\kappa^n.
    \]
   \end{proposition}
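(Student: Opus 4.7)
My strategy is to upgrade the pointwise ``$\limsup$'' expansion of Lemma \ref{limsup} into uniform exponential expansion by combining Koebe's distortion theorem with the compactness of the region where $|F'|$ is not already large — a region whose compactness is special to this family because of the explicit form $F'(z)=\ell-e^{z}$. Since $|F'(z)|=|\ell-e^{z}|$, the condition $|F'(z)|\leq\kappa_{0}$ confines $e^{z}$ to the Euclidean disk $\overline{B(\ell,\kappa_{0})}\subset\C$, which in turn confines $z$ to a bounded subset of the cylinder $Q$. Fixing $\kappa_{0}$ with $1<\kappa_{0}<\ell$, the set
\[
B:=\{z\in\cJ(F):|F'(z)|\leq\kappa_{0}\}
\]
is closed and bounded, hence compact. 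The only critical point of $F$ in $Q$ is $\Pi(\log\ell)$, which belongs to the Fatou component $\cU_{0}$; since $\cJ(F)$ does not contain it, compactness of $B$ furnishes a uniform lower bound $|F'(z)|\geq c_{0}>0$ valid on all of $\cJ(F)$.

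Next, I would transfer the large-derivative property from a single point to an open neighborhood using Koebe distortion. Fix $2\delta<\min\{\pi,\dist(\cJ(F),\PC(F))\}$, so that for every $y\in\cJ(F)$ the disk $B(y,\delta)$ embeds into $Q$ and avoids $\PC(F)$, and hence every holomorphic branch of every $F^{-n}$ is well-defined on $B(y,\delta)$. For $z\in B$, Lemma \ref{limsup} lets me pick $N_{z}\geq 1$ with $|(F^{N_{z}})'(z)|\geq C\kappa_{0}^{N_{z}}$, where $C$ is the Koebe distortion constant for the half-radius ratio. Pulling back the ball $B(F^{N_{z}}(z),\delta/2)$ along the inverse branch of $F^{-N_{z}}$ sending $F^{N_{z}}(z)$ to $z$ yields an open neighborhood $V_{z}\ni z$ on which
\[
|(F^{N_{z}})'(w)|\geq\kappa_{0}^{N_{z}}\qquad\text{for every }w\in V_{z}.
\]

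The proof then globalizes by a greedy checkpoint argument. By compactness I cover $B\subset V_{z_{1}}\cup\cdots\cup V_{z_{r}}$, and set $N_{i}:=N_{z_{i}}$ and $N_{\max}:=\max_{i}N_{i}$. Given arbitrary $z\in\cJ(F)$, I inductively build checkpoints $0=n_{0}<n_{1}<n_{2}<\cdots$: if $F^{n_{k}}(z)\notin B$, set $n_{k+1}=n_{k}+1$ and pick up the factor $|F'(F^{n_{k}}(z))|\geq\kappa_{0}$; if $F^{n_{k}}(z)\in V_{z_{i}}$, set $n_{k+1}=n_{k}+N_{i}$ and pick up $|(F^{N_{i}})'(F^{n_{k}}(z))|\geq\kappa_{0}^{N_{i}}$. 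By induction $|(F^{n_{k}})'(z)|\geq\kappa_{0}^{n_{k}}$. For arbitrary $n\geq 1$, take the largest $n_{k}\leq n$ (so $0\leq n-n_{k}<N_{\max}$); the chain rule combined with the uniform lower bound $|F'|\geq c_{0}$ on $\cJ(F)$ for the remaining iterates gives
\[
|(F^{n})'(z)|\geq c_{0}^{N_{\max}}\kappa_{0}^{n_{k}}\geq L\kappa^{n},
\]
with $\kappa:=\kappa_{0}$ and $L:=(c_{0}/\kappa_{0})^{N_{\max}}$.

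The delicate step is the Koebe pullback: one must ensure that the pullback disks are simply connected and embed into the cylinder (whence $\delta<\pi$) and that univalent iterated inverse branches of $F^{-N_{z}}$ exist along the orbit, for which the hypothesis $\dist(\cJ(F),\PC(F))>0$ — a consequence of Theorem \ref{exBaker} and the control of the post-singular set — is precisely what is needed. The remaining book-keeping point is that the uniform lower bound $c_{0}$ on $|F'|$ over $\cJ(F)$ is what allows the checkpoint construction to absorb the leftover iterates $n-n_{k}$ into the constant $L$.
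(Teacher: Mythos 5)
Your overall strategy — exploit the explicit form $F'(z)=\ell-e^{z}$ to observe that $B:=\{z\in\cJ(F):|F'(z)|\leq\kappa_{0}\}$ is compact for $1<\kappa_{0}<\ell$, spread expansion from points to neighborhoods by Koebe distortion, and close by a finite subcover and a checkpoint/amortization argument — is a sound template, and the compactness of $B$ is indeed special to this family (for the Fatou functions of \cite{KU05} the derivative tends to $1$ in the escaping direction, so the analogous set is not compact there). The paper simply defers to \cite{KU05} verbatim, so I cannot contrast your route with the paper's own in detail, but the skeleton is the expected one.

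There is, however, a genuine gap at the Koebe pullback step: you assert that Lemma~\ref{limsup} ``lets me pick $N_{z}\geq 1$ with $|(F^{N_{z}})'(z)|\geq C\kappa_{0}^{N_{z}}$.'' Lemma~\ref{limsup} only gives $\limsup_{n}|(F^{n})'(z)|=+\infty$, i.e., for each \emph{fixed} threshold $M$ there is some $N$ (depending on $M$ and $z$) with $|(F^{N})'(z)|>M$. It does \emph{not} allow you to beat a threshold $C\kappa_{0}^{N}$ that itself grows exponentially in the index $N$ being chosen; that is essentially the uniform exponential expansion you are trying to prove, so invoking it here is circular. (If $|(F^{n})'(z)|$ grew only polynomially, the $\limsup$ would still be $+\infty$, yet no such $N_{z}$ would exist.) As a consequence the final identification $\kappa:=\kappa_{0}$ is also unjustified.

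The fix is routine: use Lemma~\ref{limsup} only to find $N_{z}$ with $|(F^{N_{z}})'(z)|>2C$ for the \emph{fixed} Koebe constant $C$, so that $|(F^{N_{z}})'(w)|>2$ for $w\in V_{z}$. After the finite subcover $V_{z_{1}},\dots,V_{z_{r}}$, each checkpoint block of length $N_{i}$ gains a factor $\geq 2$, and each single step outside $B$ gains a factor $\geq\kappa_{0}$; amortizing, one gets $|(F^{n_{k}})'(z)|\geq\kappa_{1}^{n_{k}}$ with $\kappa_{1}:=\min\bigl(\kappa_{0},\,2^{1/N_{\max}}\bigr)>1$. The leftover iterates $n-n_{k}<N_{\max}$ are then absorbed using $|F'|\geq c_{0}$ on $\cJ(F)$ as you describe, yielding the statement with $\kappa=\kappa_{1}$ and a suitable $L>0$.
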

\begin{proof}
    Proof follows verbatim from \cite[Lemma 2.6]{KU05}.
\end{proof}

\section{Topological pressure and  existence and uniqueness of conformal measures}

In this section, we will  prove the existence and uniqueness of ergodic conformal measures for $F_{\ell,c}$.
In order to simplify notation, we also  denote  $\flc$ as simply  $f$ and  $F_{\ell, c}$ as $F$. Will sometimes refer to   $\lambda$ as $c-(\ell-1)\log c$, and 
we revisit the definition of the post-critical set of $F$, given in (\ref{Postcritico}).
%$$PC(F)=\{F^n(\pi(\log \ell)): n\geq 0\}.$$
%Let $t\geq 0$ and $z\in Q\backslash PC(F)$. The lower and the upper topological pressure is given respectively by 

\medskip

Let $C_b= C_b(\cJ(F))$ be the Banach space of all bounded continuous complex values function on $\cJ(F)$.

For $t>0$, define $\mathscr L_t: C_b \rightarrow C_b$ be the Perron-Frobenius operator, given  by 
\begin{equation}\label{transfer operator}
\mathscr L_t g(z)= \sum_{x\in F^{-1}(z)}\vert F'(x) \vert^{-t} g(x).
\end{equation}
For every $n\geq 1$  its iterates are given by
$$
\mathscr L_t^n g(z)= \sum_{x\in F^{-n}(z)}\vert (F^n)'(x) \vert^{-t} g(x).$$
In particular, 
$$\mathscr L^n_t \mathds 1(z)= \sum_{x\in F^{-n}(z)}\vert (F^n)'(x) \vert^{-t}.
$$
We  will prove   in what follows that   $\vert \vert \mathscr  L_t \mathds 1\vert  \vert_\infty <\infty   $  and since  $\vert \mathscr  L_t  g(z) \vert \leq \vert \vert \mathscr  L_t \mathds 1   \vert \vert_\infty \vert \vert g\vert \vert_\infty$ we have that the operator (\ref{transfer operator}) is well-defined.

\medskip

For every $t\geq 0$ and every $z\in Q\backslash PC(F)$, define the lower and upper topological pressure, respectively by 
\begin{equation*}
    \underline{P}(t, z)=\liminf_{n\to \infty} \dfrac{1}{n}\log \mathscr L_t^n \mathds 1(z) \quad \textrm{ and }\quad
        \overline{P}(t,z)=\limsup_{n\to \infty}\frac{1}{n}\log \mathscr L_t^n \mathds 1(z).
\end{equation*}
As in \S~\ref{dinamicacociente}, we focus on the open infinite strip $P=\{z\in \mathbb C: 0< \Ima(z)< 2\pi\}$. 
Given $M\geq 0$,  we  consider the subset of the cylinder $Q$, given by 
$E_M=\{z\in Q: 0 \leq \Real(z)\leq M\}$. 
Following Lemma~\ref{biyectividad}, the map $f: P \to \mathbb C$ is bijective, so  we will denote again $f_{*}^{-1}: \mathbb C\to P$ 
as the  inverse  map of  the map $f: P\to \mathbb C.$
\begin{proposition} \label{operator}
Let $t>1$ and $z\in \cJ(F)$. Then, 
\begin{enumerate}
    \item $\underline{P}(t,z)$  and $\overline{P}(t,z)$ do not depend on the choice of $z\in \cJ(F)$. We can denote  $\underline{P}(t,z)$ and $\overline{P}(t,z)$ by $\underline{P}(t)$
 and $\overline{P}(t)$ respectively.
 \item  $\vert \vert
    \mathscr{L}_t \mathds 1
    \vert\vert_\infty: =\sup\{\mathscr{L}_t \mathds 1(z): z\in \cJ(F)\} <+\infty$.
    \item $\vert\vert \mathscr L_t^n\mathds 1(z)\vert\vert_\infty\leq \vert\vert \mathscr L_t \mathds 1(z)\vert\vert_\infty^n$ and 
       $\overline{P}(t)\leq \log \vert \vert\mathscr{L}_t \mathds 1(z)\vert \vert_\infty.  $
    \item For every $t>1$, $\overline{P}(t)<+\infty$.
    \item For $t>1,$ both functions $\underline{P}(t)$ and $\overline{P}(t)$ are convex, continuous strictly  decreasing and  $\lim_{t\to\infty}\overline{P}(t) = -\infty$.
        \item $\displaystyle\lim_{\Real{z}\to \infty} 
    \mathscr L_t \mathds 1(z)=0.$
    \end{enumerate}
\end{proposition}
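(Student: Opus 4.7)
The plan is to reduce all six assertions to two ingredients already at hand: the explicit enumeration of preimages afforded by the bijectivity $f:P\to\C$ of Lemma~\ref{biyectividad}, and the uniform expansion $|(F^n)'(z)|\geq L\kappa^n$ on $\cJ(F)$ given by Proposition~\ref{L:Expansion}, together with the classical Koebe distortion theorem, which is applicable because $\cJ(F)$ is separated from $PC(F)$ (a consequence of Theorem~\ref{exBaker} and the Corollary).

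\medskip

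\emph{Parts (2), (4), (6).} Fixing any lift $\tilde z\in\C$ of $z\in Q$, Lemma~\ref{biyectividad} yields
\[
F^{-1}(z)=\bigl\{\Pi(w_k):k\in\Z\bigr\},\qquad w_k:=f_*^{-1}(\tilde z+2\pi i k)\in P.
\]
Since $f(w)=\lambda+\ell w-e^w$ and $w_k\in P$, the equation $f(w_k)=\tilde z+2\pi i k$ forces $e^{w_k}$ to dominate the linear term for $|k|$ large, so $w_k\approx \log(-\tilde z-2\pi i k)$ on the branch with $\Ima(w_k)\in(0,2\pi)$; hence $\Real(w_k)\sim\log|k|$ and
\[
|F'(w_k)|=|\ell-e^{w_k}|\asymp|\tilde z+2\pi i k|\asymp 2\pi|k|\qquad(|k|\to\infty),
\]
with implicit constants uniform for $\tilde z$ in bounded subsets. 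Summing,
\[
\mathscr L_t\mathds 1(z)=\sum_{k\in\Z}|F'(w_k)|^{-t}\asymp\sum_{k\in\Z}|\tilde z+2\pi i k|^{-t},
\]
a sum that converges precisely for $t>1$, proving~(2). For~(6), I split: the finitely many terms with $|k|\leq K$ are each $O(\Real(\tilde z)^{-t})\to 0$, while the tail $|k|>K$ is dominated by $\sum_{|k|>K}(2\pi|k|)^{-t}$, which is small for $K$ large. Part~(4) then follows from (2) combined with (3).

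\medskip

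\emph{Parts (3) and (5).} Part~(3) is a one-line induction: $\mathscr L_t^{n+1}\mathds 1(z)=\sum_{x\in F^{-1}(z)}|F'(x)|^{-t}\mathscr L_t^n\mathds 1(x)\leq\|\mathscr L_t^n\mathds 1\|_\infty\,\mathscr L_t\mathds 1(z)$, which iterates to $\|\mathscr L_t^n\mathds 1\|_\infty\leq\|\mathscr L_t\mathds 1\|_\infty^n$ and hence $\overline P(t)\leq\log\|\mathscr L_t\mathds 1\|_\infty$. For~(5), convexity of $t\mapsto\log\mathscr L_t^n\mathds 1(z)$ comes from H\"older,
\[
\mathscr L_{\alpha t_1+(1-\alpha)t_2}^n\mathds 1(z)\leq\bigl(\mathscr L_{t_1}^n\mathds 1(z)\bigr)^{\alpha}\bigl(\mathscr L_{t_2}^n\mathds 1(z)\bigr)^{1-\alpha},
\]
and is preserved on passing to $\frac{1}{n}\log$ and then $\liminf$ or $\limsup$; continuity on $(1,\infty)$ is automatic for a convex function. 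Strict monotonicity and $\overline P(t)\to -\infty$ follow directly from Proposition~\ref{L:Expansion}: for $t_2>t_1$,
\[
\mathscr L_{t_2}^n\mathds 1(z)\leq(L\kappa^n)^{-(t_2-t_1)}\mathscr L_{t_1}^n\mathds 1(z),
\]
giving $\overline P(t_2)\leq\overline P(t_1)-(t_2-t_1)\log\kappa$ and the analogous inequality for $\underline P$.

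\medskip

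\emph{Part (1) and the main obstacle.} Since $\cJ(F)\cap PC(F)=\emptyset$, for each $z\in\cJ(F)$ there is $r>0$ with $B(z,2r)\cap PC(F)=\emptyset$; then every inverse branch of every $F^n$ extends univalently to $B(z,2r)$, and Koebe yields a constant $K$ with $|(F^n)'(w_1)/(F^n)'(w_2)|\leq K$ uniformly in $n$ and the choice of branch, for $w_1,w_2\in B(z,r)$. Summing over branches gives $\mathscr L_t^n\mathds 1(w_1)\asymp\mathscr L_t^n\mathds 1(w_2)$ on $B(z,r)\cap\cJ(F)$ with a constant independent of $n$. To connect two arbitrary $z,z'\in\cJ(F)$ I pick $m$ and a preimage $y$ of $z'$ under $F^m$ inside $B(z,r)$ (possible because preimages of any point in $\cJ(F)$ are dense in $\cJ(F)$), and combine the Koebe comparison with $\mathscr L_t^{n+m}\mathds 1(z')\geq|(F^m)'(y)|^{-t}\mathscr L_t^n\mathds 1(y)$ to obtain $C^{-1}\mathscr L_t^n\mathds 1(z)\leq\mathscr L_t^{n+m}\mathds 1(z')\leq C\mathscr L_t^n\mathds 1(z)$ for all $n$; taking $\frac{1}{n}\log$ shows both $\underline P$ and $\overline P$ are independent of the basepoint. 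The real technical obstacle lies in the asymptotic step underlying (2),(4),(6): making the estimate $|F'(w_k)|\asymp 2\pi|k|$ rigorous and uniform in $\tilde z\in\cJ(F)$, and controlling the finitely many `exceptional' preimages for which $e^{w_k}$ does not yet dominate the linear term. Once that is in place, everything else follows by formal manipulation and standard distortion theory.
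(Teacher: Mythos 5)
Your outline tracks the paper's proof step for step: explicit enumeration of preimages via the bijection $f:P\to\C$, a splitting of the sum defining $\mathscr L_t\mathds 1(z)$ into a dominant tail plus finitely many exceptional terms, Koebe distortion for part (1), H\"older for convexity in (5), and Proposition~\ref{L:Expansion} for strict decrease and the limit at $+\infty$. Parts (3) and (5) are fully proved and match the paper. For part (1) you replace the paper's one-line appeal to Koebe on a common simply connected neighbourhood by a chain argument combining local Koebe with density of backward orbits; this is sound and, if anything, more explicit than what the paper writes.

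The genuine gap is precisely the one you flag yourself at the end: the estimate $|F'(w_k)|\asymp 2\pi|k|$ with constants \emph{uniform over all of} $\cJ(F)$, not just over bounded subsets. Since $\cJ(F)$ is unbounded in the direction $\Real z\to+\infty$, the claim ``uniform for $\tilde z$ in bounded subsets'' does not give $\|\mathscr L_t\mathds 1\|_\infty<\infty$, and hence (2), and therefore (4), are not actually established; likewise the ``$O(\Real(\tilde z)^{-t})$'' bound in your argument for (6) is unjustified. The paper closes this gap by exploiting the algebraic identity obtained from the functional equation: writing $z_k=f_*^{-1}(\tilde z+2\pi ik)$, one has
\[
\ell-e^{z_k}=\ell-\ell z_k-\lambda+(\tilde z+2\pi i k),
\]
so that for all $k$ in the index set $\mathbb Z_\lambda=\{k:\pi|k-\ell|\geq|\Ima(\lambda)|+2\ell\pi\}$ the imaginary part alone gives $|\ell-e^{z_k}|\geq\pi|k-\ell|$, uniformly in $z\in\cJ(F)$. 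The finitely many $k\notin\mathbb Z_\lambda$ are then handled by a two-case argument: for $\Real z\geq T$ (some fixed large $T$) one has $\Real(z_k)\geq\ell$ for every $k$, hence $|\ell-e^{z_k}|\geq e^\ell-\ell>1$; while for $z$ in the compact set $E_T$ the infimum $M=\inf\{|\ell-e^{z_k}|:z\in E_T,\ k\notin\mathbb Z_\lambda\}$ is strictly positive because $\cJ(F)$ contains no critical points. This yields a bound on $\mathscr L_t\mathds 1(z)$ independent of $z$, and a sharpened version of the same computation (replacing $\pi|k-\ell|$ by $M+\frac{\pi}{2}|k|$ with $M\to\infty$ as $\Real z\to\infty$) gives part (6). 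Until this uniformity is carried out, ``everything else follows by formal manipulation'' is a plan rather than a proof.
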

\begin{proof}
(1) Since any two points in $\cJ(F)$ belong to an open simply connected set disjoint from $PC(F)$, it follows from Koebe's distortion theorem that  $\underline{P}(t,z)$ and $\overline{P}(t,z)$  are independent of $z$.

(2) For every $t>1, $ and $z\in Q\backslash PC(F)$, we have
\begin{align*}
    \mathscr L_t\mathds 1(z)= \sum_{x\in F^{-1}(z)} \vert F'(x)\vert^{-t}=\sum_{x\in F^{-1}(z)}\Big \vert 
    \ell- e^x
    \Big \vert^{-t}=
    \sum_{k=-\infty}^{+\infty}\Big \vert 
    \ell- e^{z_k}
    \Big \vert^{-t},
    \end{align*}
where  $\tilde{z}$ be the only point in $\pi^{-1}(z)\cap P$ and $z_k= f^{-1}_*(\tilde{z}+ 2\pi i k)$, with $k\in \mathbb Z$  be the only point in $P$, such that 
$f(z_k)= \tilde{z}+2\pi i k. $
Then $\ell-e^{z_k}= \ell-\ell z_k-\lambda +(\tilde{z}+ 2\pi i k)$.
So, 
$$\mathscr L_t\mathds 1(z)=
    \sum_{k=-\infty}^{+\infty} \Big\vert  \ell-\ell z_k -\lambda + (\tilde{z}+2\pi ik)\Big\vert^{-t}.
 $$
Let  $$\mathbb Z_\lambda:=\{k\in \mathbb Z: \pi \vert k-\ell \vert \geq \vert \Ima(\lambda)\vert+ 2\ell\pi \}.$$
Then, for $z\in \cJ(F)$ and $k\in \mathbb Z_k$, we have
\begin{align*}
    \Big\vert  \ell-\ell z_k -\lambda + (\tilde{z}+2\pi ik)\Big\vert\geq \Big\vert  \Ima(\ell-\ell z_k -\lambda + (\tilde{z}+2\pi ik))\Big\vert& = 
    \Big\vert  -\ell\Ima(z_k) -\Ima(\lambda) +\Ima(\tilde{z}) + 2\pi k)\Big\vert\\
    &\ge \big\vert 2\pi \vert k-\ell\vert -\vert\Ima(\lambda)\vert \big\vert \ge \pi \vert k-\ell \vert.
    \end{align*}
On the other hand,  for $z= F(z_k)$, we have $f(z_k)= \tilde{z}+ 2\pi ik$. Hence, $$\Real(z)= \Real(\tilde{z})= \Real(\lambda)+ \ell \Real(z_k)- e^{\Real(z_k)}\cos(\Ima(z_k)).$$
Then, we fix $T>0$ so large that if $\Real(z)\geq T$, then $\Real(z_k)\geq \ell$ for all  $k\in \mathbb Z.$ So, for all such $z$ and all $k\in \mathbb Z
$, we have
$$\vert \ell- e^{z_k} \vert \geq  \vert e^{z_k}\vert -\ell = e^{\Real(z_k)}-\ell\geq e^\ell-\ell > 1. $$

\medskip

%Following Bergweiler [Ber93], we have that $J(f_c)\subset \{z\in \mathbb C: \Real(z)\geq -2\}$ and $\cJ(F)$ 
Since  $\cJ(F) $  is a subset of $Q$ without critical points, and $z_k\in \cJ(F)$,  we have that the equality $\ell-e^{z_k}=0$ never holds, and therefore
$M=\inf\{\vert \ell-e^{z_k}\vert: z\in E_T, k\in \mathbb Z\backslash \mathbb Z_\lambda  \}>0.$
Then, for all $t>1$ and all $z\in \cJ(F)$ we have 
\begin{align*}
\mathscr L_t\mathds 1(z)=
    \sum_{k=-\infty}^{+\infty} \vert \ell- e^{z_k}\vert^{-t}&=
    \sum_{k\in \mathbb Z_\lambda} \Big\vert  \ell-\ell z_k -\lambda + (\tilde{z}+2\pi ik)\Big\vert^{-t}+
       \sum_{k\in \mathbb Z\backslash \mathbb Z_\lambda} \vert \ell- e^{z_{k}}\vert^{-t}\\
       & 
       \leq \sum_{z\in \mathbb Z_\lambda} (\pi \vert k-\ell\vert)^{-t} + \#(\mathbb Z\backslash \mathbb Z_\lambda) \max\{1, M\}^{-t}<+\infty. 
\end{align*}
Therefore, $\vert\vert  \mathscr  L_t\mathds 1\vert \vert_{\infty}<+\infty$.

(3) For every $n\ge 1$ and every $z\in \cJ(F),$
\begin{align*}
\mathscr L_t^n\mathds 1(z)= \sum_{x\in F^{-n}(z)} \vert  (F^n)'(x)\vert^{-t}&=  \sum_{y\in F^{-(n-1)}(z)} \vert  (F^{n-1})'(y)\vert^{-t}
\sum_{x\in F^{-1}(y)} \vert  F'(x)\vert^{-t}\\
&\le \vert\vert  \mathscr L_t \mathds 1\vert \vert_{\infty} \mathscr L_t^{n-1} \mathds 1(y).
\end{align*}
So, by a direct inductive argument, we have  that $|\mathscr L_t^n\mathds 1(z)| \leq \| \mathscr L_t\mathds 1 \|^n_\infty $ and consequently  for all $t>1$,   $\overline{ P}(t)=\overline{ P}(t,z)\le \log \| \mathscr L_t \mathds 1 \|_{\infty}$.

Item (4) follows from Items (2) and (3).
%\section{Existence and uniqueness of conformal measure and Pressure}
%for $F: \cJ(F)\to \cJ(F)$}
Item (5) follows immediately from H\"older's inequality. Thus, thanks to the convexity,  the function $t\to \overline{P}(t)$, with $t>1$ is continuous. The facts that $P(t)$, $t>1$ is strictly decreasing and that $\lim_{t\to +\infty}P(t)= -\infty$, follows from Proposition~\ref{L:Expansion}.

Finally, to prove Item (6), we consider $z\in \cJ(F)$ and $\tilde{z}\in \Pi^{-1}(z)\cap P.$ By definition of $f_{\ell,c}, $  it follows that,  $\displaystyle\lim_{\Real z \to +\infty} \Real z_k = +\infty, $ uniformly with respect to $k\in \mathbb Z. $
Hence, 
$\displaystyle\lim_{\Real z\to +\infty} |\Real (\tilde{z}-\ell z_k)|= +\infty$
uniformly with respect to $k\in \mathbb Z. $ Then, 
\begin{align*}
    |\ell- \ell z_k-\lambda+ (\tilde{z}+ 2\pi i k)|&\geq  \frac{1}{2} \Big(
    |\Real\left(\ell- \ell z_k-\lambda+ (\tilde{z}+ 2\pi i k)\right)|+
    |\Ima\left(
    \ell- \ell z_k-\lambda+ (\tilde{z}+ 2\pi i k)
    \right)|\Big)\\
    & \geq  \frac{1}{2} \left(\Real(\tilde{z}-\ell z_k)\right)+ (\ell +\Real(\lambda))+\frac{1}{2} \left(- 2\ell \pi- \Ima(\lambda)+ 2 \pi \vert k\vert   \right)   \\
    & \geq M+ \frac{\pi}{2}\vert k\vert.
\end{align*}
Therefore, 
$$\mathscr L_t(\mathds 1)(z)\leq \sum_{k\in \mathbb Z} (M+ \frac{\pi}{2}\vert k\vert )^{-t}.$$ Then, letting $M\nearrow +\infty$, we get the desired result.
\end{proof}
%\section{Existence and uniqueness of conformal measure and Pressure}
%for $F: \cJ(F)\to \cJ(F)$}

\subsection{Existence of conformal measure}
Let $t, \alpha \in \mathbb R. $ We say that a measure $\nu$ on $\cJ(F)$ is  $(t, \alpha)$-\emph{conformal}  if for every Borel set $A\subset \cJ(F)$ such that $F\vert_{A}$ is injective, then 
$$\nu(F(A))= \int_A \alpha \vert  F'\vert^{t}\,  d\nu. $$

Fix $n\geq 1$. Consider the set 
$$K_n = \bigcap_{j\ge 0} F^{-j}(E_n), $$
 where $E_n=\{z\in Q: 0\le \Real(z)\le n\}$. 
 Since $F: Q\to Q$ is continuous and $K_n$ is $F$-forward invariant  compact subset of $Q$. Following \cite[Lemma 5.3]{DU91a} and \cite{PU}, we have that for every $t>0$ and  for every $n\geq 1$ there exist  a Borel probability measure $m_n$ supported on $K_n$ and a non-decreasing sequence $\{P_n(t)\}_{n=1}^\infty$ satisfying the semi-conformality property:
 $$m_n(F(A))\ge e^{P_n(t)}\int_A\vert F'\vert^{t} \, dm_n, $$
  for every Borel set  $A\subset E_n$ such that $F\vert_A$ is injective.
  If, in addition, $A\cap \partial E_n= \emptyset$,  then 
 $$m_n(F(A))= e^{P_n(t)}\int_A\vert F'\vert^{t} \, dm_n.$$
 
\medskip
%We now prove  that the sequence of measures $\{m_n\}_{n=1}^\infty$ is tight. For that,  fix $\epsilon >0$ and $M>0$

%\begin{definition}
%A sequence of measure $m_k$ on a topological space $X$ is \emph{tight} if for every $\epsilon >0$, there exists  a compact set $B$ such that $m_k(X\backslash B)<\epsilon$ for all $k.$
%\end{definition}

\begin{proposition}\label{tight}
The sequence of measures $\{m_n\}_{n=1}^\infty$ is tight, that is,  for every $\epsilon >0$, there exists  a compact set $C$ of $\cJ(F)$ such that $m_n(\cJ(F)\backslash C)<\epsilon$, for all $n.$ 
\end{proposition}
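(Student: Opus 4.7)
The plan is to establish the stronger statement that $m_n(E_M^c)\to 0$ as $M\to\infty$, uniformly in $n$; tightness then follows by taking $C := E_M\cap \cJ(F)$, a compact subset of the cylinder $Q$. The main tool is the semi-conformality of $m_n$, applied to a natural partition of the fundamental strip $P$ into countably many pieces on which $F$ is injective. For each $k\in\Z$, set
\[
A_k := f_*^{-1}\bigl(\{w\in\C : 2\pi k \le \Ima(w) < 2\pi(k+1)\}\bigr)\subset P.
\]
By Lemma~\ref{biyectividad} the $A_k$ are pairwise disjoint with $P=\bigsqcup_{k\in\Z}A_k$, and $F|_{A_k}$ is injective since $f(A_k)$ lies inside a single fundamental strip of $\Pi$.

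Next I derive a lower bound on $|F'|$ on each $A_k\cap E_M^c$. For $x\in A_k$ with $\Real(x)>M$, the identity
\[
\Ima(f(x)) = \Ima(\lambda) + \ell\,\Ima(x) - e^{\Real(x)}\sin(\Ima(x))
\]
together with $\Ima(f(x))\in [2\pi k,2\pi(k+1))$ yields $|e^{\Real(x)}\sin(\Ima(x))| \ge 2\pi|k| - C_0$ with $C_0 := |\Ima(\lambda)| + 2\pi\ell$, whence $|F'(x)| \ge |\Ima(F'(x))| \ge 2\pi|k| - C_0$; simultaneously $|F'(x)| \ge e^{\Real(x)} - \ell \ge e^M - \ell$. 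Hence, for $M$ sufficiently large and a universal constant $c>0$,
\[
\inf_{A_k\cap E_M^c}|F'| \ge c\cdot\max(|k|, e^M)
\]
(for the finitely many bounded $|k|$ where only the second bound is active, the contribution is subsumed by the estimate below).

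Finally, applying the semi-conformality to $A := A_k\cap E_M^c\cap K_n \subset E_n$, on which $F$ is injective, gives $m_n(F(A)) \ge e^{P_n(t)}\int_A |F'|^t\,dm_n$; since $m_n(F(A))\le 1$,
\[
m_n(A_k\cap E_M^c\cap K_n) \le c^{-t}\, e^{-P_n(t)}\,(\max(|k|,e^M))^{-t}.
\]
Summing over $k\in\Z$ via the elementary estimate $\sum_{k\in\Z}(\max(|k|,e^M))^{-t} = O(e^{(1-t)M})$ for $t>1$ yields $m_n(E_M^c) \le C_1(t)\,e^{-P_n(t)}\,e^{(1-t)M}$. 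Since $\{P_n(t)\}$ is non-decreasing and bounded below by some $P_{n_0}(t)>-\infty$, the bound is uniform in $n\ge n_0$; the finitely many $m_n$ with $n<n_0$ are trivially tight as each has compact support. Letting $M\to\infty$ completes the proof. The main obstacle is the two-sided lower bound on $|F'|$: it must combine the $k$-decay (from the spacing of preimages of $F$) with the $M$-decay (from the large real-part regime) into a single multiplicative estimate, so that the sum in $k$ vanishes as $M\to\infty$ precisely because $t>1$.
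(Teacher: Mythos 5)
Your proof is correct and takes a genuinely different route from the paper's. The paper decomposes $E_M^c$ into $E_1(M)=\{z:\Real(F(z))\geq M\}$ and $E_2(M)=\{z:\Real z\geq M,\ \Real(F(z))<M\}$; for $E_1(M)$ it pulls back to $F^{-1}(E_M^c)$ and estimates $\sup_{E_M^c}|(F_k^{-1})'|$ branch by branch, which involves a rather delicate cutoff $\hat k\asymp e^{\Real(z)/4}$ and yields only the polynomial decay $O(M^{1-t})$ for that piece, while for $E_2(M)$ it observes the relevant branch index is forced to satisfy $|k|\gtrsim e^M$. You instead partition the fundamental strip into the injectivity pieces $A_k$ (indexed by the image strip) and bound the forward derivative $\inf_{A_k\cap E_M^c}|F'|$ from below in a single stroke, combining the linear-in-$k$ growth $|\Ima F'(x)|=e^{\Real x}|\sin\Ima x|\gtrsim|k|$ with the exponential growth $|F'(x)|\geq e^{\Real x}-\ell$. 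This avoids both the $E_1/E_2$ split and the $\hat k$ construction, and it already delivers the sharper uniform exponential bound $m_n(E_M^c)=O(e^{(1-t)M})$ that the paper only recovers later, in Corollary~\ref{cor_4.12}, via Lemma~\ref{gen_conf}. Two small points worth tidying: the constant $C_0$ should also absorb the width $2\pi$ of the image strip (so $C_0=2\pi(\ell+1)+|\Ima\lambda|$), since for $x\in A_k$ one only has $|\Ima f(x)|\geq 2\pi(|k|-1)$; and, since $P$ is an open strip, the $A_k$ cover $\cJ(F)\cap P$ but omit the circle $\{\Ima z\equiv 0\}$ in $Q$ — the paper's own branch decomposition has the same gap, and it is resolved by taking a half-open fundamental strip so that $\{A_k\}_{k\in\Z}$ genuinely partitions the cylinder.
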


\begin{proof}
In this proof, for the sake of simplicity, we will use 
 $E_n^c$ to actually denote 
$E_n^c\cap \{z\in Q: \Real(z)>0\}$. 
Moreover, we will keep notation $\lambda= c-(\ell-1)\log c$.

\medskip

Let 
\[E_1(M):= \{z\in \cJ(F):  \Real(F(z))\geq M\};\]
\[E_2(M):= \{z\in \cJ(F): \Real(z)\geq M\ \&\ \Real(F(z))< M\};\]
%\[E_M^c= E_1(M)\cup E_2(M).\]
We will estimate separately the measure  $m_n$ of  the sets $E_1(M)$
 and $E_2(M)$, which cover $E_M^c.$
 
Fix $\epsilon >0$ and $M>0$, by using the definition of semi-conformality of $m_n$, we have
%we estimate  the measure of the following set
\begin{align}\label{ine_conf}
m_n\Big(E_1(M)\Big) 
\leq & 
m_n\Big( F^{-1}\Big(\{z\in Q: \Real(z)\geq M\} \Big)\Big)\nonumber\\ = & m_n\Big(\bigcup_{k\in \mathbb Z} F_k^{-1}\Big(\{z\in Q: \Real(z) \geq  M\}\Big) \Big)\nonumber\\
=&\sum_{k\in \mathbb Z} m_n\Big(F_k^{-1}(E_{M}^c )\Big) \\
\leq &\sum_{k\in \mathbb Z} e^{-P_n(t)} \dfrac{1}{\displaystyle\inf_{w\in F^{-1}(E_M^c)} \vert (F_k)'(w)\vert^t } m_n(E_M^c) \nonumber
\\
& \leq  e^{-P_n(t)} \sum_{k\in \mathbb Z}\sup_{z\in E_M^c} \vert (F_k^{-1})'(z)\vert^t.\nonumber
\end{align}
Here, for every $k \geq 1,$ $F_k^{-1}: \{z\in Q: \Real(z)\geq M\}\to Q $ is the inverse branch  $F_k^{-1}(z)= f_*^{-1}(z+ 2\pi k i)$.

\medskip
Recall that, for every $z\in Q, $ 
$z_k = F_k^{-1}(z)= f_*^{-1}(z+2k \pi i).$
then,  for  
 $z\in E_M^c$, we have
$$
\vert (F_k^{-1})'(z) \vert =\dfrac{1}{\vert \ell-e^{z_k}\vert}= \dfrac{1}{\vert \ell+z+ 2\pi k i-\lambda - \ell f_{*}^{-1}(z+2\pi ki)\vert }.$$
Note that  from the equality
$
f(z_k)=\lambda+ \ell z_k-e^{z_k}= z+ 2\pi k i, 
$
and 
$$
\ell \Real(z_k)= \Real(z)+ e^{\Real(z_k)} \cos(\Ima(z_k))- \Real(\lambda), 
$$ we have,  for $M>0$ large enough,  $\Real(z_k)$ becomes as large as one desired uniformly with respect to $k\in \mathbb Z.$

Moreover, since $0\le \Ima(z_k)\leq 2\pi,$ then 
%\begin{align*}
$\vert z+ 2\pi ki \vert =
 \vert \lambda+\ell z_k-e^{z_k}\vert  \geq \dfrac{1}{2}\vert e^{z_k}\vert= \dfrac{1}{2}e^{\Real(z_k)}.$
%\end{align*}
Then 
%$2\vert z+2\pi k i\vert\geq e^{\Real(z_k)}$ and hence
\[\Real (z_k)\leq \log 2+ \log \vert z+2\pi ki\vert.\]
Thus, 
\begin{equation}\label{eq: Rea(z)-Rea(zk)}
\vert \Real (z)-\ell\Real(z_k)\vert\geq \Real(z)-\ell\Real(z_k)
= \Real(z)- \ell\log 2- \ell\log \vert z+2\pi k i \vert.
\end{equation}
Consider 
$$
\hat k=\max\Big\{k\geq 0: 
\log \big\vert z+ 2\pi l i\big\vert\leq \frac{1}{3}\Real(z), \forall l\in \mathbb Z,  \,  \vert l \vert \leq k
\Big\}.
$$
Then 
$\log \big\vert z+ 2\pi i (\hat k+1) \big\vert\geq  \frac{1}{3} \Real(z)$ or equivalently,  
\begin{equation}\label{eq:hat k}
\big\vert z+ 2\pi i (\hat k+1) \big\vert\geq  e^{\frac{1}{3} \Real(z)}.
\end{equation}
Since
$\vert z+ 2\pi i (\hat k+1)\vert\leq \vert z\vert + 2\pi (\hat k+1)\leq\frac{1}{2} e^{\frac{1}{3}\Real(z)}+ 2\pi (\hat k+1), $
then  from (\ref{eq:hat k}), we have 
$e^{\frac{1}{3}\Real(z)}\leq \frac{1}{2} e^{\frac{1}{3}\Real(z)}+ 2\pi (\hat k+1).$
Then, assuming that $M$ is large enough, we get
%$2\pi(\hat k+1)\geq \frac{1}{2}e^{\frac{1}{3}\Real(z)}.$
%Consequently, 
$$4\pi \hat k\geq 2 \pi(\hat k+1)\geq \frac{1}{2}e^{\frac{1}{3}\Real(z)},$$
then, $\hat k\geq \frac{1}{8\pi}e^{\frac{1}{3}\Real(z)}\geq e^{\frac{1}{4}\Real(z)}$.
Thus, 
$$e^{\frac{M}{4}}\leq e^{\frac{1}{4}\Real(z)}\leq \hat k.$$
So, for $M>0$ large enough, and $\vert k\vert \leq \hat k$ and using (\ref{eq: Rea(z)-Rea(zk)}), we have
\begin{align*}
    \vert \Real(z)- \ell\Real(z_k)\vert\geq & \Real(z)- \ell\log 2-\frac{2}{3} \Real(z) 
    = \frac{1}{3}\Real(z)- \ell\log 2
    \geq \frac{1}{4}\Real (z)\geq \frac{M}{4}.
\end{align*}
Therefore 
\begin{align*}
    &\vert (z+2k\pi i)-\ell z_k-(\lambda-\ell) \vert \\
    &\geq  \frac{1}{2}\vert \Real((z+2k\pi i)-\ell z_k-(\lambda-\ell))\vert+ \frac{1}{2}\vert \Ima((z+2k\pi i)-\ell z_k-(\lambda-\ell))\vert \\
    & \geq \frac{1}{2}|\Real(z)-\ell\Real(z_k)-\Real(\lambda-1)|+\frac{1}{2} |\Ima(z)+2k\pi-\ell\Ima(z_k)-\Ima(\lambda)| \\
    & \geq \frac{1}{2}|\Real(z)-\ell\Real(z_k)| - \frac{1}{2}|\Real(\lambda-\ell)|+\frac{1}{2}2\pi|k|-2\ell\pi-\Ima(\lambda)) \\
    &\geq \frac{M}{8}-\frac{1}{2}|\Real(\lambda-\ell)| + \pi|k| \geq \frac{M}{9}+\pi|k|.
\end{align*}
If $\vert k\vert\geq \hat k, $
then,  the following inequality holds if only $M$ is sufficiently large.
\begin{align*}
    \vert (z+2k\pi i)-\ell z_k-(\lambda-\ell)\vert\geq 
\vert \Ima&((z+2k\pi i)-\ell z_k-(\lambda-\ell))\vert\\
\geq 2\pi& \vert k\vert- 2\ell\pi - \Ima(\lambda)\\
> \pi &\vert k \vert.
\end{align*}

Thus, we get
\begin{align*}
    \sum_{k\in \mathbb Z} e^{-P_n(t)}\sup_{z\in E_M^c}\big\{\vert (F_k^{-1})'(z) \vert\big\}^{t} &=
    e^{-P_n(t)}\sum_{\vert k\vert \leq \hat k} \sup_{z\in E_M^c}\big\{\vert (F_k^{-1})'(z) \vert\big\}^{t}
    +
    e^{-P_n(t)}\sum_{\vert k\vert > \hat k} \sup_{z\in E_M^c}\big\{\vert (F_k^{-1})'(z) \vert\big\}^{t}\\
    &\leq 
    e^{-P_n(t)}\sum_{\vert k\vert \leq \hat k} \Big(\frac{M}{9}+ \pi \vert k\vert\Big)^{-t}
    +
    e^{-P_n(t)}\sum_{\vert k\vert > \hat k} (\pi\vert k \vert)^{-t}
    \\
    &\leq 
    2e^{-P_n(t)}\sum_{k=0}^{\infty} \Big(\frac{M}{9}+ \pi \vert k\vert\Big)^{-t}
    +
    \frac{2}{\pi^t}e^{-P_n(t)}\sum_{k > \hat k} k^{-t}  \\  
&\leq
    A_t^{(1)} e^{-P_n(t)} M^{1-t} +A_t^{(2)} e^{-P_n(t)} \hat{k}^{(1-t)}\\
    & \leq A_t e^{-P_n(t)} \max\Big\{ M^{1-t} , e^{\frac{M}{4}(1-t)}\Big\}\\
    &= A_t e^{-P_n(t)} M^{1-t}.
        \end{align*}
        Where, $ A_t^{(1)}, A_t^{(2)}, A_t$ are  constants depending only on $t>1$  and 
        this always holds for $ M $ sufficiently large.

 Since $\sup \big\{\vert F'(z) \vert: z\in K_n\big\} <\infty$  then,  for $n$ large enough we have 
        $P_{n}(t)>-\infty $. Since $\{P_n(t)\}_{n=0}^{\infty}$ is non-decreasing, we  put $\gamma(t)= \sup_{n} \{-P_n(t)\}< \overline{P(t)}<\infty $.
       Then, for $M$ sufficiently large,    
       % Since $\sup \big\{\vert F'(z) \vert: z\in K_n\big\} <\infty$  then, there is $n_0$ such that 
        %$P_{n_0}(t)>-\infty $. Since $\{P_n(t)\}_{n=0}^{\infty}$ is non-decreasing, we have for all $n\geq 0$  
        %and all $B\subset E_M^c$, where,  
                %\begin{equation}\label{eq:BsubsetE}
        %m_n(F^{-1}(B))\leq A_t e^{-P_{n_0}(t)} M^{1-t} %m_n(B).
        % \end{equation}
%Thus, 
%\begin{align*}
 %   m_n (E_M^c)&= m_n\Big(\Big\{z\in \mathcal J(F): \Real(z)\geq M\,  \& \, \Real(F(z))\geq M\Big\}\Big) \\
  %  &+ m_n\Big(\Big\{z\in \mathcal J(F): \Real(z)\geq M\,  \&\,  \Real(F(z))< M\Big\}\Big)
%\end{align*}
%Applying (\ref{eq:BsubsetE}) to the set  $B= E_1(M)$, we get
%  $B= \Big\{z\in \mathcal J(F): \Real(z)\geq M\,  \& \, \Real(f(z))\geq M\Big\}$, 
\begin{equation}\label{primerConjunto}
m_n\Big(E_1(M) \Big)\leq A_t e^{\gamma(t)}M^{(1-t)}.
\end{equation}

We now estimate $m_n (E_2)$. Note first, that for  $z\in E_2(M), $  $\Real(z)\geq M$ and $ \Real(f(z))<M$. Then, 
\[
|f(z)| = |\lambda + \ell z-e^z| \geq |e^z|- |\ell z- \lambda| \geq |e^z|= e^{\Real(z)} \geq e^M.
\]
Hence, 
\[
e^{2M}\leq |f(z)|^2 = |\Real (f(z))|^2+  |\Ima(f(z))|^2\leq M^2+ |\Ima(f(z))|^2.
\]
Thus, 
\[ |\Ima(f(z))|\geq \sqrt{e^{2M}- M^2} \geq e^{M}/2 .\]
For every $k\geq e^M/2$, and for $M$ large enough, 
\[
|\ell+z+2\pi ki -\lambda- \ell f_*^{-1}(z+ 2\pi k i)|\geq 2\pi k- 2(\ell+1) \pi- |\lambda-\ell|\geq k.
\] So, 
$|(F_k^{-1})'(z)|^t\leq  k^{-t}.$

Then  for $M$ large enough, 
\begin{equation}\label{segundoconjunto}
m_n (E_2(M))\leq e^{\gamma(t)}\sum_{k\geq e^M/2} k^{-t}\preceq e^{\gamma(t)} e^{M(1-t)}.
\end{equation}

Therefore combining~(\ref{primerConjunto}) and (\ref{segundoconjunto}), we obtain that the sequence 
$\{m_n\}_{n\geq 1}$ is tight. 
        
\end{proof}

  Recall that for $n$ sufficiently large,  $P_n(t)\leq \overline{P}(t)$ and the sequence $\{P_n(t)\}$ is non-decreasing. We denote $\log (\alpha_t):= \lim_{n\to \infty} P_n(t)$. 
The existence of  a weak limit  of $\{m_n\}_n$  derives from Prochorov's Theorem's following the same argument of reasoning  found in \cite{KU05} based on the work  presented  in  \cite{DU91a}. By  considering  a subsequence $\{n_k\}_{k=1}^\infty$, we  can get the  desired conformal measure  $m_t$ (with certain modification on the boundary sets), as  a weak limit of  sequence $\{m_{n_k}\}_k$.
Concretely,  the following result holds.
\begin{theorem}\label{lim_meas}
For every $t>1$, there exist $\alpha_t>0$ and  a $(t, \alpha_t)$-conformal measure $m_t$ for the map $F: \cJ(F)\to \cJ(F),$ and $m_t(\cJ(F))=1$.
\end{theorem}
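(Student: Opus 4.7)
The plan is to obtain $m_t$ as a weak-$*$ accumulation point of the sequence $\{m_n\}_{n=1}^{\infty}$ constructed just before the statement, and then pass to the limit in the semi-conformality relation. Because each $m_n$ is a Borel probability measure on $Q$ and the family is tight by Proposition~\ref{tight}, Prokhorov's theorem yields a subsequence $\{m_{n_k}\}_{k}$ converging weakly to some Borel probability measure $m_t$ on $Q$. The support of every $m_n$ lies in $K_n\subset\cJ(F)$ (the piece of the Julia set whose forward orbit is trapped in $E_n$), and tightness together with the fact that $\cJ(F)$ is closed ensures $\supp(m_t)\subset\cJ(F)$; thus $m_t(\cJ(F))=1$. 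For the multiplicative constant, since $\{P_n(t)\}$ is non-decreasing and bounded above by $\overline{P}(t)<\infty$ (Proposition~\ref{operator}(4)), the quantity $\alpha_t:=\lim_{n\to\infty}e^{P_n(t)}$ exists in $(0,\infty)$; positivity follows because $\sup\{|F'(z)|:z\in K_n\}<\infty$ forces $P_n(t)>-\infty$ for $n$ large.

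Next I would transfer the semi-conformality of $m_n$ to genuine conformality of $m_t$. Fix a Borel set $A\subset\cJ(F)$ on which $F$ is injective. By inner regularity it suffices to treat the case where $A$ is compact. Then $A\subset E_N$ for some $N$, and for every $n\ge N$ one has the inequality
\[
m_n(F(A)) \;\ge\; e^{P_n(t)}\int_A |F'|^t\,dm_n,
\]
with equality whenever $A\cap\partial E_n=\emptyset$. Shrinking $A$ slightly (or thickening it) to produce compact sets $A^\pm$ with $m_t(\partial A^\pm)=m_t(\partial F(A^\pm))=0$ makes both $A^\pm$ and $F(A^\pm)$ continuity sets for $m_t$. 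The function $|F'|^t$ is continuous and bounded on each compact $A^\pm$, so weak convergence of $m_{n_k}$ together with continuity of $F$ lets me pass to the limit on both sides, giving
\[
m_t(F(A^\pm))=\alpha_t\int_{A^\pm}|F'|^t\,dm_t.
\]
Sandwiching $A$ between such sets and using the regularity of $m_t$ yields the full conformality identity $m_t(F(A))=\alpha_t\int_A|F'|^t\,dm_t$.

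The main obstacle I anticipate is the second step: the semi-conformality for the $m_n$'s holds with equality only \emph{away from} $\partial E_n$, and the boundary sets $\partial E_n$ migrate toward $\infty$ as $n\to\infty$. One must therefore argue that the subsequential limit $m_t$ charges neither $\partial E_n$ at finite $n$ nor ``escapes'' in the limit; the tightness estimates (\ref{primerConjunto}) and (\ref{segundoconjunto}) from Proposition~\ref{tight}, which provide the uniform decay $m_n(E_M^c)\lesssim e^{\gamma(t)}M^{1-t}$, are exactly what is needed to control this and to justify the weak limit being supported inside the radial Julia set. Once this is secured, the conformality identity extends from compact test sets to arbitrary Borel $A$ by a standard monotone-class or regularity argument, completing the proof.
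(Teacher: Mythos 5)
Your proposal is correct and follows essentially the same route as the paper: tightness (Proposition~\ref{tight}) plus Prokhorov's theorem yields a weak-$*$ subsequential limit $m_t$, the non-decreasing bounded sequence $P_n(t)$ gives $\alpha_t = \lim e^{P_n(t)}$, and conformality is transferred from semi-conformality of the $m_n$ by restricting to bounded test sets and controlling the boundary sets $\partial E_n$ via the tightness estimates. The paper merely defers these details to \cite{KU05} and \cite{DU91a} (noting ``certain modification on the boundary sets''), which is precisely the technical point you correctly identified and addressed.
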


\medskip

Consider the transfer operator
$$\mathscr L_t g(z)=\sum_{x\in F^{-1}(z)}\vert F'(x)\vert^{-t} g(y), \quad \quad \forall \; g\in  C_b(\mathcal \cJ(F)).$$
We know from Proposition~\ref{operator} Item~(2), $\mathscr L_f$ is a well-defined linear operator.

Let $\mathscr L_t^*: C_b (\cJ(F))^*\to C_b (\cJ(F))^*$   denote the dual operator of $\mathscr L_t$ defined by
	$$
	\int \phi \, d(\mathscr L_t^*\mu)=\int \mathscr L_t \phi \,d\mu, \quad \forall \, \phi \in C_b(\cJ(F)).
	$$
Define $\widehat{\mathscr L}_t = \alpha_t^{-1} \mathscr L_t$ and denote its dual operator by $\widehat{\mathscr L}_t^*.$

\begin{proposition}
Let $t>1$,  the following properties hold
\begin{itemize}
\item[(i)] $\mathscr L_t^*m_t= \alpha m_t, $ or equivalently  $\widehat{\mathscr L}_t^*m_t= m_t$.
\item[(ii)]  
$\sup_{n\geq 0} \{\vert \vert  
\widehat{\mathscr L}_t^n (\mathds 1)
\vert \vert_\infty  \}<\infty$.
\item[(iii)] For every $\epsilon>0$ there exists $M>0$ such  that 
$$
\inf_{n\geq 0}\sup\{
\widehat{\mathscr L}_t^n (\mathds 1)(z): \vert \Real z\vert\leq M 
\}\geq 1-\epsilon.
$$
\item[(iv)]  There exists  $M_0$ such that  for every  $M\leq M_0$,
$$
\inf_{n\geq 0}\inf\{
\widehat{\mathscr L}_t^n (\mathds 1)(z): \vert \Real z\vert\leq M 
\}\geq\frac{1}{4K_m^t} \Big(\max\{ K_M, K_{M_0}\}\Big)^{-1}.
$$
\item[(v)] $P(t)= \overline{P}(t)= \underline{P}(t)= \log \alpha_t.$
\end{itemize}

%Comprimir Lemma 4.6 4.7  4.8, 4.9  y 4.10 (Ver queue see puede simplificr reducir o eliminar o escribir como una sola)
\end{proposition}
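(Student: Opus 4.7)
My plan is to treat (i) as a direct consequence of conformality, then bootstrap from (ii) through (iv) via Koebe distortion combined with the tightness output of Proposition~\ref{tight}, and finally read (v) off from the resulting two-sided bounds on $\widehat{\mathscr L}_t^n\mathds 1$.

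For (i), I would decompose $\cJ(F)$ into countably many Borel pieces $\{A_j\}$ on each of which $F$ restricts to a bijection onto its image, using the partition $P = P_-\cup\{\Ima z = \pi\}\cup P^-$ from the proof of Lemma~\ref{biyectividad} lifted and refined in $Q$. The change-of-variables consequence of $(t,\alpha_t)$-conformality then gives, for every $\phi\in C_b(\cJ(F))$,
\[
\int \mathscr L_t\phi\, dm_t = \sum_j \int_{F(A_j)} \phi\bigl(F|_{A_j}^{-1}(x)\bigr)\,|F'(F|_{A_j}^{-1}(x))|^{-t}\, dm_t(x) = \alpha_t\sum_j \int_{A_j}\phi\, dm_t = \alpha_t\int \phi\, dm_t,
\]
which is exactly $\mathscr L_t^* m_t = \alpha_t m_t$, and after dividing by $\alpha_t$, $\widehat{\mathscr L}_t^* m_t = m_t$.

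For (ii), the eigenmeasure relation gives $\int \widehat{\mathscr L}_t^n\mathds 1\, dm_t = 1$ for every $n\geq 0$. To upgrade this $L^1$ bound to a uniform $L^\infty$ bound, I would fix a small open disc $V\subset \cJ(F)\setminus\PC(F)$ with $m_t(V)>0$. On each inverse branch $F_i^{-n}$ of $F^n$ defined on $V$ (conformal by Proposition~\ref{L:Expansion} and simple connectedness of $V$), Koebe's distortion theorem and iterated conformality yield an absolute constant $K$ such that, for $y_i = F_i^{-n}(w)$ with $w\in V$,
\[
\alpha_t^{-n}|(F^n)'(y_i)|^{-t} \;\leq\; K^t\, m_t\bigl(F_i^{-n}(V)\bigr)\big/m_t(V),
\]
so summing over $i$ gives $\widehat{\mathscr L}_t^n\mathds 1(w) \leq K^t/m_t(V)$ for $w\in V$. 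Extending this bound to all of $\cJ(F)$ uses the density of backward orbits of $V$ (which holds because $F$ is expanding on $\cJ(F)$ by Proposition~\ref{L:Expansion}) together with the cylinder-tail decay of Proposition~\ref{operator}(6) and its left-end analogue implicit in the proof of Proposition~\ref{tight}. For (iii), (ii) together with the tightness of $m_t$ (inherited as a weak limit of the tight sequence $\{m_n\}$) gives $m_t(\{|\Real z|>M\}) < \epsilon/(2C)$ with $C=\sup_n\|\widehat{\mathscr L}_t^n\mathds 1\|_\infty$, whence
\[
\int_{\{|\Real z|\leq M\}} \widehat{\mathscr L}_t^n\mathds 1\, dm_t \;\geq\; 1-\epsilon/2,
\]
and since $m_t$ is a probability this forces $\sup\{\widehat{\mathscr L}_t^n\mathds 1(z): |\Real z|\leq M\}\geq 1-\epsilon$. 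Item (iv) then drops out of (iii) by a second Koebe application: on $\{|\Real z|\leq M\}$, which is disjoint from $\PC(F)$ by hyperbolicity, Koebe's theorem bounds the ratio $\widehat{\mathscr L}_t^n\mathds 1(z)/\widehat{\mathscr L}_t^n\mathds 1(z')$ uniformly in $n$ by a constant of the form $K_M$, so taking $z$ to realize the sup of (iii) and $z'$ arbitrary produces the claimed infimum bound with the distortion constants packaged as in the statement.

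For (v), since $\int \mathscr L_t^n\mathds 1\, dm_t = \alpha_t^n$ from (i), the bounds (ii) and (iv) read $c\,\alpha_t^n\leq \mathscr L_t^n\mathds 1(z)\leq C\,\alpha_t^n$ on a bounded subset of $\cJ(F)$; taking $(1/n)\log$ and invoking the location-independence of the pressure from Proposition~\ref{operator}(1) gives $\overline{P}(t)=\underline{P}(t)=\log\alpha_t$, so the common value $P(t)=\log\alpha_t$. The main obstacle I expect is (ii): ensuring the Koebe-based sum bound extends uniformly in $n$ to the entire Julia set, in particular controlling the preimage mass that accumulates near the two ends of the cylinder $Q$, requires careful gluing of the distortion estimate with the exponential-tail bounds from Proposition~\ref{tight}. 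Once (ii) is in hand, items (iii)--(v) follow by the measure-theoretic manipulations sketched above.
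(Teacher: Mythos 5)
Your sketches for (i), (iii), (iv) and (v) match the paper's argument in all essentials (the paper proves (iii) by contradiction while you argue directly, but the two are equivalent; the paper refers to \cite{DU91b} for (i), which is the change-of-variables computation you spell out). The genuine issue is in (ii), and it is precisely the one you flag as ``the main obstacle.''

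Your Koebe argument on a fixed disc $V\subset\cJ(F)\setminus\PC(F)$ correctly yields $\widehat{\mathscr L}_t^n\mathds 1(w)\leq K^t/m_t(V)$ for $w\in V$, but the proposed extension via ``density of backward orbits of $V$'' does not produce a uniform-in-$n$ bound on all of $\cJ(F)$: knowing that some preimage of $V$ comes close to an arbitrary point $z$ gives no control on $\widehat{\mathscr L}_t^n\mathds 1(z)$ itself. The paper closes this gap with an induction on $n$ organized around the set $E_M$. Choose $M$ using Proposition~\ref{operator}(6) so that $\widehat{\mathscr L}_t(\mathds 1)(w)\leq 1$ for all $w\in E_M^c$, and let $z^{n+1}$ realize the sup of $\widehat{\mathscr L}_t^{n+1}\mathds 1$. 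If $z^{n+1}\in E_M$, the identity $\int\widehat{\mathscr L}_t^{n+1}\mathds 1\,dm_t=1$ together with the chain-of-balls Koebe constant $K_M$ (comparing any two points of $E_M$, not just points of a fixed small $V$) gives $\|\widehat{\mathscr L}_t^{n+1}\mathds 1\|_\infty\leq K_M^t/m_t(E_M)$. If instead $z^{n+1}\in E_M^c$, then $\|\widehat{\mathscr L}_t^{n+1}\mathds 1\|_\infty=\alpha_t^{-1}\sum_{y\in F^{-1}(z^{n+1})}|F'(y)|^{-t}\widehat{\mathscr L}_t^n\mathds 1(y)\leq \widehat{\mathscr L}_t(\mathds 1)(z^{n+1})\cdot\|\widehat{\mathscr L}_t^n\mathds 1\|_\infty\leq\|\widehat{\mathscr L}_t^n\mathds 1\|_\infty$, and the inductive hypothesis closes the case. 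This two-case induction is the missing mechanism; it replaces your density heuristic and handles the non-compact tail cleanly. (Two minor remarks: the ``left-end analogue'' you invoke is unnecessary, since $\cJ(F)\subset\{\Real z\geq -2\ell\}$ because the Baker domain absorbs the left half-plane; and tightness of the single measure $m_t$ on the cylinder is automatic, so you need not appeal to it being a weak limit of the $m_n$.)
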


\begin{proof}
(i) We refer to \cite{DU91b} for the standard proof. 

(ii) 

First, observe as in \cite{KU05},  that fixed any two points $z, w \in Q\backslash PC(F)$, then there exists  a shorter smooth arc   $\gamma_{z,w}, $ joining $z$ and $w$ in $ Q\backslash B(PC(F), 2 \delta)$, where
$\delta = \frac{1}{2} \min \Big\{\frac{1}{2}, \dist (J(F), PC(F)
\Big\}.$ There exists a number $l_M\geq 1$ such that each such arc $\gamma_{z,w}$ can be covered by a chain of most balls $\ell_M$ of radius $\delta$ centered at point of $\gamma_{z,w}$. Assume that  the union of thes balls $U_{z,w}$ is a simply connected set.  Then  from Koebe's distortion theorem follows that there exists $K_M\geq 1$ such that  $F^{-n}: U_{z,w}\to Q$ is a holomorphic branch of $F^{-n}$. Then, 
\[
\frac{|(F^{-n}_{*})'(z)|}{|(F^{-n}_{*})'(w)|}\leq K_M.
\]
Consequently 
\[
K_M^{-t}\leq \frac{\widehat{\mathscr L}_t^n(\mathds 1)(z)}{\widehat{\mathscr L}_t^n(\mathds 1)(w)}\leq K_M^t.
\]
In view of Proposition~\ref{operator}, Item~(5), there exists $M\geq 0$ so large in absolute value that for every $w\in E_M^c$, 
$$
\alpha_t^{-1} \mathscr L_t (\mathds 1) (w)\leq 1.
$$
Inductively, we have for every $n\geq 0$, 
$$
\vert \vert \widehat{\mathscr L}_t^n(\mathds 1)\vert \vert_{\infty} \leq \frac{K_M^{t}}{m_t(E_M)}.
$$
For  $n=0$ this estimate is immediate. Suppose  it holds for some $n\geq 0$,  and let $z^{n+1}\in Q$, such that 
$$
\widehat{\mathscr L}_t^{n+1}(\mathds 1)(z^{n+1})= \vert \vert  \widehat{\mathscr L}_t^n(\mathds 1)\vert \vert_{\infty}.
$$
If $z^{n+1}\in E_M$, then by Distortion Koebe Lemma we have 
\begin{align*}
    1  = \int  \widehat{\mathscr L}_t^{n+1}(\mathds 1) dm &\geq \int_{E_M} \widehat{\mathscr L}_t^{n+1}(\mathds 1) dm \\ & \geq 
    \inf_{E_M} \widehat{\mathscr L}_t^{n+1}(\mathds 1) m(E_M)
    \geq  K_M^{-t}\sup_{E_M} 
    \widehat{\mathscr L}_t^{n+1}(\mathds 1)  m(E_M) \\ & =
    K_M^{-t} \vert \vert \widehat{\mathscr L}_t^{n+1}(\mathds 1)\vert \vert_{\infty} m(E_M).
\end{align*}
and consequently,  
$\vert \vert
\widehat{\mathscr L}_t^{n+1}(\mathds 1) 
\vert\vert_\infty\leq  K_M^t (m(E_M))^{-1}.$

\medskip
If $z^{n+1}\notin E_M$,  then 
\begin{align*}
\vert\vert \widehat{\mathscr L}_t^{n+1}(\mathds 1) \vert\vert_\infty & = \widehat{\mathscr L}_t^{n+1}(\mathds 1)(z^{n+1})=\alpha_t^{-1}\sum_{y\in F^{-1}(z^{n+1})}
\vert F'(y)\vert^{-t} \widehat{\mathscr L}_t^{n}(\mathds 1)(y)\\
&\leq \alpha_t^{-1} \sum_{y\in F^{-1}(z^{n+1})} \vert F'(y) \vert^{-t} \vert\vert   \widehat{\mathscr L}_t^{n}(\mathds 1) \vert \vert_\infty
\leq 
\widehat{\mathscr L}_t^{n}(\mathds 1)(z^{n+1}) \frac{K_M^t}{m_t(E_M)}\leq \frac{K_M^t}{m_t(E_M)}.
\end{align*}
(iii) Let $\Theta= \sup_{n\geq 1} \vert \vert  \widehat{\mathscr L}_t^{n}(\mathds 1) \vert \vert_\infty<\infty$. 
 Let $M\geq 0, $ be so large that  $m_t (E_M^c)\leq \epsilon/ (4\theta). $
 Suppose for the contrary  that exists  $\epsilon>0$,  such that  for every $M>0,$
 $$\inf_{n\geq 0}\sup\{\widehat{\mathscr L}_t^{n}(\mathds 1)(z): \vert\Real z \vert\leq M \}< 1-\epsilon. $$
 Then, 
\begin{align*}
1 & =\int \widehat{\mathscr L}_t^{n}(\mathds 1) d m_t = \int_{E_M}\widehat{\mathscr L}_t^{n}(\mathds 1) d m_t+ \int_{E_M^c} \widehat{\mathscr L}_t^{n}(\mathds 1) d m_t \\ &\leq 
(1-\epsilon) m_t(E_M)+ \Theta m_t (E_M^c)\leq (1-\epsilon)+ \Theta\frac{\epsilon}{4\Theta} = 1-\frac{\epsilon}{4}.
    \end{align*}
    (iv) Follows immediately from (iii) and the Koebe Distortion theorem.
    \medskip

    Finally, to prove (v), let $\Theta= \sup_{n\geq 1} \vert \vert \widehat{\mathscr{L}}_t^n(\mathds 1)\vert \vert_\infty <\infty$. From Part (ii) 
        we have that for every $z\in P, $
        $$
    \mathscr{L}_t^n(\mathds 1)(z)\leq \Theta\alpha_t^n.
        $$
    Hence, 
    $$\overline{P}(t)= \limsup_{n\to \infty}\frac{1}{n}\log \mathscr{L}_t^n(\mathds 1)\leq \log \alpha_t.$$
    Moreover, in view of Part (iv), 
$$\mathscr{L}_t^n(\mathds 1)(x_0)\geq \frac{1}{4 K_{M_0}}\alpha_t^n$$ and therefore 
$$\underline{P}(t)=  \liminf_{n\to \infty}\frac{1}{n}\log \mathscr{L}_t^n(\mathds 1)\geq \log \alpha_t.$$
\end{proof}

Now, we aim to prove ergodic and uniqueness property of the limit measure obtained in the Theorem \ref{lim_meas}.

Recall that, we consider the escaping subset of the Julia set of $F$ defined by 
\[
I_\infty(F) = \{z\in\cJ(F):\lim F^n(z) = +\infty\}.
\]
Note that the above set is well-defined since $F$ acts on the quotient space $Q$ and $f$ has a Baker domain containing a left half-plane. So, the analogous set for $f$ is given by 
\[
I_\infty(f)=\{z\in\cJ(f):\lim_{n\to\infty}\Real(f^n(z))=+\infty\}. 
\]
It is clear that 
\[
I_\infty(f) =\Pi^{-1}(I_\infty(F)),
\]
where $\Pi:\C\to Q$ is the quotient map. 
We denote by $\cJ_r$ the corresponding complement of the escaping, that is, 
\[
\cJ_r(F)=\cJ(F)\backslash I_\infty(F),\quad\text{and}\qquad \cJ_r(f)=\cJ(f)\backslash I_\infty(f). 
\]
We first prove a general result for conformal measures on the dynamics of $F$. 

\begin{Lemma}\label{gen_conf}
    If $\nu$ is a $(t,\beta^t)$-conformal measure for $F^j$ with $t>1$, then there exists $M>0$ such that \[
    \liminf_{n\to\infty}\Real(F^{nj}(x))\leq M,
    \]
    for $\nu$-a.e. $x\in\cJ(F)$. In particular, $\nu(I_\infty(F))=0$ or equivalently $\nu(\cJ_r(F))=1$. 
\end{Lemma}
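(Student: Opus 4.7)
My approach is to construct a single half-strip $E_{M_{0}}$ such that the $F^{j}$-orbit of $\nu$-almost every $x\in\cJ(F)$ must eventually re-enter it; this gives both conclusions of the lemma simultaneously. The only ingredients needed are the decay at the essential singularity from Proposition~\ref{operator}(6), the submultiplicativity $\mathscr L_{t}^{n}\mathds 1\le\|\mathscr L_{t}\mathds 1\|_{\infty}^{n}$ from Proposition~\ref{operator}(3), and the dual identity arising from $(t,\beta^{t})$-conformality of $\nu$ for $F^{j}$.

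The first step is to transfer the decay at the essential singularity from $\mathscr L_{t}\mathds 1$ to $\mathscr L_{t}^{j}\mathds 1$: writing $\mathscr L_{t}^{j}\mathds 1=\mathscr L_{t}^{j-1}(\mathscr L_{t}\mathds 1)\le\|\mathscr L_{t}\mathds 1\|_{\infty}^{j-1}\mathscr L_{t}\mathds 1$ and invoking Proposition~\ref{operator}(6) yields $\mathscr L_{t}^{j}\mathds 1(y)\to 0$ as $\Real y\to+\infty$. I then fix $M_{0}>0$ large enough that $\mathscr L_{t}^{j}\mathds 1(y)\le\tfrac12\beta^{t}$ whenever $y\in\cJ(F)$ satisfies $\Real y\ge M_{0}$.

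Second, from $(\mathscr L_{t}^{j})^{*}\nu=\beta^{t}\nu$ (equivalent to $(t,\beta^{t})$-conformality for $F^{j}$), testing against $g=\mathds 1_{A}\circ F^{j}$ and using $\mathscr L_{t}^{j}(\mathds 1_{A}\circ F^{j})=\mathds 1_{A}\cdot\mathscr L_{t}^{j}\mathds 1$ gives
\[
\nu\bigl(F^{-j}(A)\bigr)=\beta^{-t}\int_{A}\mathscr L_{t}^{j}\mathds 1\,d\nu
\]
for every Borel $A\subset\cJ(F)$. If $A\subset E_{M_{0}}^{c}$, Step~1 forces $\nu(F^{-j}(A))\le\tfrac12\nu(A)$. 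Setting $A_{0}=\cJ(F)\cap E_{M_{0}}^{c}$ and $A_{n}=F^{-j}(A_{n-1})\cap E_{M_{0}}^{c}$, which unravels to $A_{n}=\{x\in\cJ(F):\Real F^{kj}(x)>M_{0},\ k=0,\dots,n\}$, I obtain $\nu(A_{n})\le\tfrac12\nu(A_{n-1})\le 2^{-n}\nu(A_{0})$, so the decreasing intersection $A_{\infty}=\bigcap_{n}A_{n}$ is $\nu$-null.

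Finally, Proposition~\ref{operator}(3) gives $\|\mathscr L_{t}^{Nj}\mathds 1\|_{\infty}<\infty$ for every $N\ge 0$, so the displayed identity applied to $F^{Nj}$ (with conformality constant $\beta^{tN}$) yields $\nu(F^{-Nj}(A_{\infty}))=\beta^{-tN}\int_{A_{\infty}}\mathscr L_{t}^{Nj}\mathds 1\,d\nu=0$. A point $x$ with $\liminf_{n}\Real F^{nj}(x)>M_{0}$ satisfies $F^{Nj}(x)\in A_{\infty}$ for some $N$, so $\{x:\liminf_{n}\Real F^{nj}(x)>M_{0}\}\subset\bigcup_{N\ge 0}F^{-Nj}(A_{\infty})$ is $\nu$-null, proving the main assertion. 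The inclusion $I_{\infty}(F)\subset\{x:\liminf_{n}\Real F^{nj}(x)=\infty\}$ then gives $\nu(I_{\infty}(F))=0$, equivalently $\nu(\cJ_{r}(F))=1$. I expect the main technical point to be Step~1, namely converting the pointwise vanishing of $\mathscr L_{t}\mathds 1$ at infinity into the uniform strict comparison $\mathscr L_{t}^{j}\mathds 1\le\tfrac12\beta^{t}$ on $E_{M_{0}}^{c}$, which is what drives the geometric contraction of $\nu(A_{n})$ along orbits that remain in $E_{M_{0}}^{c}$.
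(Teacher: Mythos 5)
Your proof is correct, and it takes a genuinely different route from the paper's. The paper defines a finite chain of auxiliary measures $\nu_{0},\dots,\nu_{j}$ by ``undoing'' one $F$-step at a time, uses the conformality of $\nu$ for $F^{j}$ to force $\nu_{j}=\nu$, and feeds in the explicit rate estimate $\sum_{k}\sup_{E_{M}^{c}}|(F_{k}^{-1})'|^{t}\preceq M^{1-t}$ from the tightness argument to obtain the contraction $\nu(F^{-j}(B))\leq (C\beta^{-1}M^{1-t})^{j}\nu(B)$ for $B\subset E_{M}^{c}$. You instead work with the dual operator identity $(\mathscr L_{t}^{j})^{*}\nu=\beta^{t}\nu$, which immediately produces $\nu(F^{-j}(A))=\beta^{-t}\int_{A}\mathscr L_{t}^{j}\mathds 1\,d\nu$, and then extract the contraction from the qualitative decay $\mathscr L_{t}^{j}\mathds 1(y)\to 0$ as $\Real y\to\infty$ (via the correct bound $\mathscr L_{t}^{j}\mathds 1=\mathscr L_{t}(\mathscr L_{t}^{j-1}\mathds 1)\leq\|\mathscr L_{t}\mathds 1\|_{\infty}^{j-1}\mathscr L_{t}\mathds 1$ and Proposition~\ref{operator}(6)). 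What the paper buys with its approach is an explicit contraction rate tied to $M^{1-t}$, which echoes the quantitative Corollary~\ref{cor_4.12}; what you buy is a shorter argument that avoids the somewhat awkward chain of measures $\nu_{k}$ entirely and works uniformly for any iterate $F^{Nj}$, which you use cleanly to dispose of the preimages of $A_{\infty}$. Your reduction of the $\liminf$ statement to $\bigcup_{N}F^{-Nj}(A_{\infty})$ is exactly the right one, and the final inclusion $I_{\infty}(F)\subset\{\liminf_{n}\Real F^{nj}>M_{0}\}$ closes the argument correctly. One small stylistic note: when expanding $A_{n}=F^{-j}(A_{n-1})\cap E_{M_{0}}^{c}$, the inequality $\nu(A_{n})\leq\frac{1}{2}\nu(A_{n-1})$ comes from $A_{n}\subset F^{-j}(A_{n-1})$ together with $A_{n-1}\subset E_{M_{0}}^{c}$; it would be worth making that chain explicit, but it is correct as you use it.
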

\begin{proof}
For every $k\geq0$, we define inductively functions $\nu_k$ as follows,
\[
\nu_0(A)=\nu(A),\qquad\text{and}\qquad \nu_{k+1}(A)=\int_{F^k(A)}\beta^{-1}\left|\left(\left(F^k|_{F^k(A)}\right)^{-1}\right)'\right|^td\nu_k,
\]
defined over every Borel set $A\subset\cJ(F)$ on which $F^k$ is injective. Using measure-conformality and the ideas in the proof of Proposition \ref{tight}, it is not difficult to see that, if $B\subset (\cJ(F)\cap E_M^c)$ such that $F^k|_B$ is injective, then 
\[\nu_{k+1}(F^{-1}(B))\leq C\beta^{-1}M^{1-t}\nu_k(B).\]
Then by induction, we have 
\[
\nu_n(F^{-n}(B))\le (C\beta^{-1}M^{1-t})^n\nu(V),
\]
for every $n\geq0$. In particular, since by assumption $\nu_j=\nu$, we get 
\[
\nu(F^{-j}(B))\leq (C\beta^{-1}M^{1-t})^j\nu(B). 
\]
Using induction again, 
\[
\nu(F^{-1}(B)\cap...\cap F^{-n}(B))\leq(C\beta^{-1}M^{1-t})^{jn}\nu(B),
\]
for every $n\geq0$. So, for $M>0$ large enough, 
\[
\nu(\bigcap_{n=0}^{\infty}F^{-n}(E_M^c))=0,
\]
and consequently
\[
\nu(\bigcup_{k=0}^\infty F^{-k}(\bigcap_{n=0}^\infty F^{-n}(E_M^c)))=0.
\]
This concludes the proof by noting that the measured in the last equation coincides with $I_\infty(F)$. 
\end{proof}
From Theorem \ref{lim_meas} with $j=1$, and $\beta=e^{P(t)}$, it follows that Lemma \ref{gen_conf} applies for the limit measure $m_t$, and the next consequences follows analogous to Corollary 4.12 in \cite{KU05}. 

\begin{Corollary}\label{cor_4.12}
    For all $M>0$ sufficiently large and $t>1$, we have 
    \[
    m_t(E_M^c)\leq Ce^{(1-t)M},
    \]
    for some constant $C$. 
\end{Corollary}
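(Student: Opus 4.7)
The plan is to partition
\[
E_M^c = E_2(M) \cup \bigl(E_M^c \cap F^{-1}(E_M^c)\bigr)
\]
and estimate each piece separately. The first piece is precisely the set $E_2(M)$ appearing in the proof of Proposition~\ref{tight}; the bound (\ref{segundoconjunto}) there was obtained by a purely conformal argument that applies verbatim to the limit measure $m_t$ (with $\alpha_t$ in place of $e^{P_n(t)}$), giving $m_t(E_2(M)) \leq C_0\, e^{(1-t)M}$ once $M$ is sufficiently large.

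For the second piece I would invoke the conformality relation $\mathscr L_t^* m_t = \alpha_t\, m_t$ from the preceding Proposition to write
\[
m_t\bigl(E_M^c \cap F^{-1}(E_M^c)\bigr) = \alpha_t^{-1} \int_{E_M^c} \sum_{x \in F^{-1}(y) \cap E_M^c} |F'(x)|^{-t} \, dm_t(y),
\]
and then bound the inner sum uniformly in $y$ by $C(t)\, e^{(1-t)M}$. The crucial new input is that every preimage $x = z_k$ lying in $E_M^c$ automatically satisfies $|F'(z_k)| = |\ell - e^{z_k}| \geq \tfrac{1}{2}\, e^{\Real(z_k)} \geq \tfrac{1}{2}\, e^M$ as soon as $e^M \geq 2\ell$, so each admissible summand is at most $2^t\, e^{-tM}$. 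Combining this pointwise bound with the generic estimate $|F'(z_k)| \geq \pi|k|$ that holds for $|k|$ larger than a fixed $k_0$ (obtained by taking imaginary parts in $F'(z_k) = \ell - \ell z_k + y + 2\pi k i - \lambda$, exactly as in the tightness proof), I would split the sum at the natural threshold $|k| \asymp e^M$ where the two pointwise bounds cross. The range $|k| \leq e^M/(2\pi)$ contributes at most $O(e^M) \cdot e^{-tM} = O(e^{(1-t)M})$, and the tail $|k| > e^M/(2\pi)$ contributes $\sum_{|k|>e^M/(2\pi)} (\pi|k|)^{-t} = O(e^{(1-t)M})$ since $t > 1$. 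Using $m_t(E_M^c) \leq 1$, this yields $m_t(E_M^c \cap F^{-1}(E_M^c)) \leq \alpha_t^{-1}\, C(t)\, e^{(1-t)M}$, and adding the two pieces gives the stated bound.

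The main obstacle is conceptual rather than technical: in the proof of Proposition~\ref{tight}, the analogous sum $\sum_k |F'(z_k)|^{-t}$ over \emph{all} preimages of $y$ decayed only polynomially in $M$ (as $M^{1-t}$), because the dominant contribution there came from those $z_k$ with $\Real(z_k)$ small. Restricting the sum to preimages in $E_M^c$ rules out exactly those terms, and it is the extra hypothesis $\Real(z_k) \geq M$ that promotes the per-term bound from the polynomial $(M/9 + \pi|k|)^{-t}$ to the exponential $2^t\, e^{-tM}$; this is the precise mechanism that upgrades the polynomial decay of tightness to the exponential decay claimed in the corollary. Once this observation is in hand, the matching of the two contributions at $|k| \asymp e^M$ is a short calculation whose convergence depends only on $t > 1$.
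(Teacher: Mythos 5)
Your proof is correct, and the decomposition
\[
E_M^c = E_2(M) \cup \bigl(E_M^c \cap F^{-1}(E_M^c)\bigr)
\]
is the right first-exit dichotomy. The first piece transfers directly from (\ref{segundoconjunto}) once one replaces the semi-conformality of $m_n$ by the exact conformality $\mathscr L_t^* m_t = \alpha_t m_t$; the second piece is handled by the transfer-operator identity together with your observation that any preimage $z_k$ with $\Real(z_k)\geq M$ has $|F'(z_k)|=|\ell-e^{z_k}|\geq e^M-\ell\geq \tfrac12 e^M$, which is precisely what replaces the polynomial summand $(M/9+\pi|k|)^{-t}$ from the tightness proof by the exponential one $2^te^{-tM}$. (One small caveat: since $E_M^c$ is defined literally as $Q\setminus E_M$, it also contains $\{\Real z<0\}$; here, as in the proof of Proposition~\ref{tight}, $E_M^c$ must be read as $\{z:\Real z>M\}$, otherwise the statement would fail for any conformal measure that charges $\cJ(F)\cap\{-2\ell\leq\Real z<0\}$.) The paper itself gives no argument, simply deferring to \cite{KU05}, so there is no in-text proof to compare against.

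It is worth noting a slightly lighter route that avoids the new derivative estimate on $E_M^c$ entirely: using $E_M^c\cap F^{-1}(E_M^c)\subset F^{-1}(E_M^c)$ and the already-established bound $\sup_{E_M^c}\mathscr L_t\mathds 1\leq A_t M^{1-t}$ (from the proof of Proposition~\ref{tight}, and reused in Lemma~\ref{gen_conf}), conformality gives $m_t(F^{-1}(E_M^c))\leq \alpha_t^{-1}A_t M^{1-t}m_t(E_M^c)$, hence
\[
m_t(E_M^c)\leq C_0 e^{(1-t)M}+\alpha_t^{-1}A_t M^{1-t}\, m_t(E_M^c),
\]
and since $\alpha_t^{-1}A_t M^{1-t}\leq\tfrac12$ for $M$ large, rearranging gives $m_t(E_M^c)\leq 2C_0e^{(1-t)M}$. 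This bootstraps the polynomial bound rather than sharpening it; both arguments are valid, and yours has the advantage of being fully self-contained at the level of pointwise derivative estimates and of not requiring the coefficient $\alpha_t^{-1}A_tM^{1-t}$ to be small.
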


\begin{theorem}\label{ergodic}
    For $t>1$, the  probability measure $m_t$ is the unique  $(t, e^{P(t)})- $conformal measure for $F$. Moreover, it is ergodic with respect to each  iterate of $F.$
\end{theorem}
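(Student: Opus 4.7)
The plan is to first establish ergodicity of $m_t$ with respect to every iterate $F^j$ and then deduce uniqueness as a consequence of ergodicity. The starting observation is that, by the chain rule, $m_t$ is $(t, e^{jP(t)})$-conformal for each $F^j$, so Lemma~\ref{gen_conf} applies to every iterate and ensures $m_t(\cJ_r(F)) = 1$. In particular, every density argument below takes place on the non-escaping set, where $m_t$-typical orbits return infinitely often to some strip $E_M$.

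For ergodicity under $F^j$, fix $j \geq 1$ and assume $A \subset \cJ(F)$ satisfies $F^{-j}(A) = A$ and $0 < m_t(A) < 1$. By the Lebesgue density theorem, choose a density point $x \in A \cap \cJ_r(F)$ together with a subsequence $n_k \to \infty$ along which $F^{jn_k}(x) \to y$ for some $y \in \cJ(F) \cap E_M$. Pick $r > 0$ small enough that $B(y, 2r) \cap \PC(F) = \emptyset$, and let $\phi_k : B(y, 2r) \to V_k \ni x$ be the holomorphic inverse branch of $F^{jn_k}$. Proposition~\ref{L:Expansion} gives $\diam V_k \to 0$, and Koebe's distortion theorem produces a constant $K \geq 1$ such that $|\phi_k'|$ varies on $B(y, r)$ by a factor at most $K$. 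The $F^j$-invariance of $A$ yields $V_k \cap A = \phi_k(B(y, r) \cap A)$, and applying the conformality identity for $F^{jn_k}$ (the common $e^{-jn_k P(t)}$ factor cancels) gives
\[
\frac{m_t(V_k \cap A)}{m_t(V_k)} \;\leq\; K^{2t}\, \frac{m_t(B(y, r) \cap A)}{m_t(B(y, r))}.
\]
Letting $k \to \infty$, the left side tends to $1$ since $x$ is a density point, so $m_t(B(y, r) \cap A) \geq K^{-2t} m_t(B(y, r))$. The decisive step is to upgrade this local bound to a uniform one: combining Proposition~\ref{L:Expansion} with the density of backward orbits in $\cJ(F)$, I would pull $B(y, r)$ back through inverse branches of $F^{jn}$ landing near any prescribed $z \in \cJ(F)$ and apply Koebe once more to obtain
\[
\liminf_{r' \to 0} \frac{m_t(B(z, r') \cap A)}{m_t(B(z, r'))} \;\geq\; c \;>\; 0
\]
for every $z \in \cJ(F)$, with $c$ independent of $z$. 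This directly contradicts the existence of an $m_t$-density point of $A^c$, forcing $m_t(A^c) = 0$.

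For uniqueness, let $\nu$ be any other $(t, e^{P(t)})$-conformal probability measure. Running the same pulled-back Koebe comparison on each $V_k$, now with $\nu$ in place of $m_t$ on the numerator side, shows that on every admissible ball $B(y, r)$ the measures $\nu$ and $m_t$ have Radon-Nikodym densities bounded between $K^{-2t}$ and $K^{2t}$; propagating as in the ergodicity step yields $\nu \ll m_t \ll \nu$ on all of $\cJ(F)$. The derivative $d\nu/dm_t$ is then $F$-invariant because both measures are conformal with the same parameter $e^{P(t)}$, hence $m_t$-a.e.\ constant by the ergodicity just established, and the fact that both are probabilities forces $\nu = m_t$.

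The principal obstacle I expect is the global propagation step: the non-compactness of $\cJ(F)$ precludes a naive cover-by-finitely-many-balls reduction. The resolution, in the spirit of \cite{KU05}, is to combine Corollary~\ref{cor_4.12} (which quantitatively controls $m_t(E_M^c)$) with the exponential expansion of Proposition~\ref{L:Expansion}, so that inverse branches along non-escaping preorbits deliver small neighborhoods of every $z \in \cJ(F)$ with uniformly bounded Koebe distortion. Once this is in place, the density-point contradiction for $A^c$ runs uniformly across $\cJ(F)$, and the same propagation mechanism drives the absolute-continuity comparison needed for uniqueness.
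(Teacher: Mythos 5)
Your plan reverses the paper's logical order: the paper proves uniqueness first and then gets ergodicity cheaply from it, whereas you want ergodicity first and uniqueness as a corollary. That reversal is a legitimate alternative template, but as written it leaves a genuine gap at exactly the step you flagged as "decisive."

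The first stage of your ergodicity argument is sound. Taking a density point $x \in A \cap \cJ_r(F)$, a subsequence with $F^{jn_k}(x) \to y \in E_M$, and Koebe pullbacks $\phi_k : B(y,2r) \to V_k$, you correctly conclude that the density of $A$ at the single point $y$ is bounded below (and in fact, by shrinking the inner radius $\varepsilon r$ and letting the Koebe constant $K_\varepsilon \to 1$, one can show it equals $1$). The gap is in the next step: from a lower bound on the density of $A$ at $y$ you want to conclude a \emph{uniform} lower bound at every $z \in \cJ(F)$. The pulled-back sets $\phi(B(y,r))$ for branches $\phi$ of $F^{-jn}$ are Koebe-balls around preimages $\phi(y)$, not around $z$; their radii are of order $|(F^{jn})'(\phi(y))|^{-1} r$, and nothing guarantees that, for a prescribed scale $r'$ at $z$, some branch produces a pullback ball that both contains $z$ and has radius comparable to $r'$. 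Density of backward orbits is a topological fact; it does not by itself produce comparability balls at the Lebesgue-density scale. So the claimed $\liminf_{r'\to 0} m_t(B(z,r')\cap A)/m_t(B(z,r')) \geq c > 0$ at every $z$ does not follow from the tools you cite, and the contradiction with the existence of a density point of $A^c$ is not reached. Your uniqueness step inherits the same gap: the absolute continuity $\nu \ll m_t \ll \nu$ is asserted "by propagating as in the ergodicity step," which is the unjustified step again. Once equivalence is granted, your Radon–Nikodym/invariance argument is correct and standard.

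By contrast, the paper sidesteps propagation entirely. It compares an arbitrary $(t,\beta^j)$-conformal measure $\nu$ for $F^j$ directly against $m_t$ by choosing balls $B(z,\eta)$ with $\eta \asymp |(F^{jn_k})'(z)|^{-1}$, using Koebe to sandwich $\nu(B(z,\eta))$ and $m_t(B(z,\eta))$ between multiples of $\beta^{-jn_k}|(F^{jn_k})'(z)|^{-t}$ and $e^{-jn_kP(t)}|(F^{jn_k})'(z)|^{-t}$ respectively, and then runs a Besicovitch covering argument over an arbitrary Borel set $E$. The exponential mismatch $\left(e^{P(t)}\beta^{-1}\right)^{jn_k}$ forces $\beta = e^{P(t)}$, and with $\beta = e^{P(t)}$ the same estimate yields mutual absolute continuity with bounded Radon–Nikodym derivatives, hence uniqueness. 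Ergodicity then follows in one line: a nontrivial $F^j$-invariant set $G$ would produce two mutually singular $(t,e^{P(t)})$-conformal measures $\nu_G$ and $\nu_{G^c}$, contradicting the just-proved equivalence. If you want to keep your order of proof, you would need to supply a precise covering mechanism replacing the "propagation" step—in practice this ends up being a Besicovitch-type argument much like the paper's, at which point it is more economical to prove uniqueness first.
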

\begin{proof}
    Given $t>1$, fix $j\geq1$, and suppose that $\nu$ is a $(t,\beta^j)$-conformal measure for $F^j$ with $\beta>0$. $\nu(I_\infty(F))=0$ by Lemma \ref{gen_conf}. Given $N>0$, set 
    \[
    \cJ_{r,N}(F) =\{z\in\cJ_r(F):\exists\ y\in\cJ(F),\text{ and }\{n_k\}_{k\geq0},\text{ with }y=\lim_k F^{n_k}(z)\text{ and }\Real(y)<N\}.
    \]
    It is easy to see that $\cup_N\cJ_{r,N}(F)=\cJ_r(F)$. Applying Lemma \ref{gen_conf} to $m_t$, there exists $M>0$ such that 
    \[
    \nu(\cJ_{r,N}(F)) = m_t(\cJ_{r,N}(F)) = 1. 
    \]
    Take $z\in\cJ_{r,N}(F)$, and let $0<\delta\leq\dist(\cJ(F),\PC(F))/2$. By definition, there exists $y\in\cJ(F)$ and an increasing sequence $\{n_k\}_k$ such that $y = \lim_k F^{jn_k}(z)$, with $\Real(y)<N$. For $k>1$ large enough, let $F_z^{-jn_k}:B(y,2\delta)\to F_z^{-jn_k}(B(y,2\delta))$ be the holomorphic inverse branch of $F^{jn_k}$ sending $F^{jn_k}(z)$ to $z$, and consider the sets $F_z^{-jn_k}(B(y,2\delta))$ and $F_z^{-jn_k}((B,\delta/K))$. 
    
    Let $\eta = c|(F^{jn_k})'(z)|^{-1}$, $c\asymp 1$. Using $\nu$-conformality, we have 
    \[
    \nu(B(z, \eta)) = \int_{F^{-jn_k}(B(z,\eta))}\beta^{-jn_k}|(F^{-jn_k})'|^td\nu. 
    \]
    By Koebe's distortion theorem, we deduce that 
    \begin{equation}\label{Koebe1}
    B_N(\nu)^{-1}\beta^{-jn_k}|(F^{jn_k})'(z)|^{-t}\leq \nu(B(z,\eta))\leq B_N(\nu)\beta^{-jn_k}|(F^{jn_k})'(z)|^{-t},
    \end{equation}
    where $B_N(\nu)\geq1$ is some constant depending on $\nu$, $N$, and the Koebe's distortion theorem. Let $M$ be fixed as above, take $E$ an arbitrary bounded Borel set contained in $\cJ_r(F)$, and let $E'=E\cap\cJ_{r,M}(F)$. From regularity of $m_t$, for every $x\in E'$, there exists a radius $r(x)>0$ of the form of $\eta$ above with its corresponding  number $n(x)=n_k(x)$ for an appropriate $k$, such that 
    \begin{equation}\label{regular}
    m_t\left(\bigcup_{x\in E'}B(x,r(x))\backslash E'\right)\leq \varepsilon. 
    \end{equation}
    Now, Besicovic theorem allows us to choose a countable sub-cover $\{B(x_i,r(x_i))\}_{i=1}^\infty$ from the cover in equation (\ref{regular}), with $f(x_i)\leq\varepsilon$ and $jn(x_i)\geq\varepsilon^{-1}$, of multiplicity bounded by some constant $C\geq1$, independent of the cover. 

    Suppose $e^{P(t)}<\beta$, from Equations (\ref{Koebe1}) and (\ref{regular}), we have 
    \begin{align}\label{eps_nu}
        \nu(E) = \nu(E') &\leq\sum_{i=1}^\infty \nu(B(x_i,r(x_i)))\beta^{-jn(x_i)} \nonumber \\
        & \leq B_M(\nu)\sum_{i=1}^\infty r(x_i)^t\beta^{-jn(x_i)} \nonumber \\
    & \leq B_M(\nu)B_M(m_t)\sum_{i=1}^\infty m_t(B(x_i,r(x_i)))\beta^{jn(x_i)}e^{P(t)jn(x_i)} \nonumber\\
    & \leq B_M(\nu) B_M(m_t) Cm_t(\bigcup_{i=1}^\infty B(x_i,r(x_i)))\left(e^{P(t)}\beta^{-1}\right)^{jn(x_i)} \\
    & \leq B_M(\nu) B_M(m_t) Cm_t(\bigcup_{i=1}^\infty B(x_i,r(x_i)))\left(e^{P(t)}\beta^{-1}\right)^{\varepsilon^{-1}} \nonumber \\
    & \leq C B_M(\nu) B_M(m_t)\left(e^{P(t)}\beta^{-1}\right)^{\varepsilon^{-1}}(\varepsilon + m_t(E')) \nonumber \\
    & = CB_M(\nu)B_M(m_t)\left(e^{P(t)}\beta^{-1}\right)^{\varepsilon^{-1}}(\varepsilon + m_t(E)). \nonumber
    \end{align}
    If we make $\varepsilon\searrow 0$, given that $e^{P(t)}\beta<1$, we obtain $\nu(E)=0$, since $E$ is arbitrary, it follows that $\cJ(F)=0$, which is a contradiction. We obtain a similar contradiction assuming that $\beta<e^{P(t)}$ and replacing in (\ref{eps_nu}) the roles of $m$ and $\nu$. This way $\beta=e^{P(t)}$ and letting $\varepsilon\searrow0$ again, and exchanging $m_t$ and $v$, we have 
    \[
    \nu(E)\leq CB_M(\nu)B_M(m_t)m_t(E)\qquad\text{and}\qquad m_t(E)\leq CB_M(\nu)B_M(m_t)\nu(E),
    \]
    along with the fact that $m_t(\cJ(F))=\nu(\cJ(F))=1$, we conclude that the measures $m_t$ and $\nu$ are equivalent with Radon-Nikodym derivatives bounded away from zero and infinity. 

    Finally, we prove that any $(t,e^{P(t)})$-conformal measure $\nu$ is ergodic with respect to $F^j$. Suppose $F(G)^{-1}=G$ for some Borel set $G\subset\cJ(F)$ with $0<\nu(G)<1$. Consider the two conditional measures $\nu_R$ and $\nu_{\cJ(F)\backslash G}$, 
    \[
    \nu_G(B) = \dfrac{\nu(B\cap G)}{\nu(G)}, \qquad\text{and}\qquad \nu_{\cJ(F)\backslash G}(B)=\dfrac{\nu(B\cap \cJ(F)\backslash G)}{\nu(\cJ(F)\backslash G)}.
    \]
    It follows that both measures are $(t,e^{P(F)})$-conformal for $F^j$ and mutually singular, which is a contradiction. 
\end{proof}

\section{Bowen Formula}
We start this section, by noticing that all the machinery developed in Section 5 of \cite{KU05} can be applied to our setting. So, it is possible to obtain a fixed point of the normalized operator $\hat{\cL}_t$, restricted to a ``smaller'' Banach space. 

Recall that $C_b:=C_b(\cJ(F))$ is the space of all bounded continuous complex valued functions defined on $\cJ(F)$. For a fixed $\alpha\in(0,1]$, and $g\in C_b$ we define the $\alpha$-variation of $g$ given by 
\[
v_\alpha(g) = \inf\{L\geq0:|g(y)-g(x)|\leq L|y-x|^\alpha\text{ for all }x,y\in\cJ(F)\text{ with}|y-x|\leq \delta\},
\]
where $\delta=\min\{1/2,\dist(\cJ(F),\PC(F)\}$, and let 
\[
\|g\|_\alpha = v_\alpha(g) + \|g\|_\infty. 
\]
It follows that the normed space
\[
(\HH_\alpha,\|\cdot\|_\alpha):= \{g\in C_b:\|g\|_\alpha<+\infty\}
\]
is a Banach space, densely contained in $\cC_b$ with respect to the $\|\cdot\|_\infty$ norm. 

With the observation that for all $t\in\C$, with $\Real(t)\geq0$, all $n\geq1$, all $x,y\in\cJ(F)$ with $|y-x|\leq\delta$, all $v\in F^{-n}(x)$, and some constant $M_t>0$, we have 
\[
\left||(F_v^{-n})'(y)|^t-|(F_v^{-n})'(x)|^t\right|\leq M_t|(F_v^{-n})'(x)|^{\Real t}|x-y|,
\]
Proposition \ref{operator} allows us to say that $\phi_t(z)=e^{-P(t)}|F'(z)|^{-t}$ is a rapidly decreasing summable dynamically Hölder potential (see \cite[Section 5]{KU05} for precise definitions). This way, the normalized operator (\ref{transfer operator}) acts on the Banach space $\HH_\alpha$ with a fixed point. 

\begin{theorem}[Theorem 5.4, \cite{KU05}]
    If $t>1$, then we have the following. 
    \begin{itemize}
        \item [(a)] The number 1 is a simple isolated eigenvalue of the operator $\hat{\cL}_t:\HH_\alpha\to\HH_\alpha$. 
        \item[(b)] The eigenspace of the eigenvalue 1 is generated by nowhere vanishing function $\psi_t\in\HH_\alpha$ such that $\int\psi_tdm_t=1$ and $\lim_{\Real(z)\to\infty}\psi_t(z)=0$. 
        \item[(c)] The number 1 is the only eigenvalue of modulus 1.  
    \end{itemize}
\end{theorem}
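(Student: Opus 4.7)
The plan is to obtain the result by applying the Ionescu-Tulcea-Marinescu theorem (a.k.a.\ Doeblin-Fortet) to $\hat{\cL}_t$ on $\HH_\alpha$, exactly following the template of \cite[Theorem 5.4]{KU05}. The core analytic input is a Lasota-Yorke inequality for $\hat{\cL}_t$, which rests on the rapidly decreasing, summable, dynamically H\"older character of the potential $\phi_t(z)=e^{-P(t)}|F'(z)|^{-t}$ established in the excerpt right before the statement.

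First, I would verify that the hypotheses used in \cite[Section 5]{KU05} carry over verbatim to our setting. The distortion estimate
\[
\bigl| |(F_v^{-n})'(y)|^t - |(F_v^{-n})'(x)|^t \bigr| \leq M_t |(F_v^{-n})'(x)|^{\Real t}|y-x|
\]
comes from Koebe distortion on balls of radius $\delta$ (recall $\delta<\tfrac12\dist(\cJ(F),\PC(F))$, which is positive by Theorem~1). Combined with uniform expansion $|(F^n)'|\geq L\kappa^n$ from Proposition~\ref{L:Expansion} and the summability bound $\|\hat{\cL}_t\mathds 1\|_\infty<\infty$ from Proposition~\ref{operator}(2), one obtains a Doeblin-Fortet inequality of the form
\[
v_\alpha(\hat{\cL}_t^n g) \leq C \kappa^{-\alpha n} v_\alpha(g) + C' \|g\|_\infty,
\]
with $\kappa^{-\alpha}<1$. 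This, together with the bound $\|\hat{\cL}_t^n \mathds 1\|_\infty \leq \Theta$ from the previous Proposition(ii), controls the $\|\cdot\|_\alpha$-norm of $\hat{\cL}_t^n g$ in terms of $\|g\|_\alpha$ uniformly in $n$.

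Next, I would establish quasi-compactness of $\hat{\cL}_t\colon\HH_\alpha\to\HH_\alpha$. The obstacle here is that $\cJ(F)$ is not compact, so the embedding $\HH_\alpha\hookrightarrow C_b$ is not compact in general. This is the main difficulty. It is circumvented by exploiting the decay at infinity: Proposition~\ref{operator}(6) gives $\hat{\cL}_t\mathds 1(z)\to 0$ as $\Real z\to\infty$, and Corollary~\ref{cor_4.12} gives $m_t(E_M^c)\leq Ce^{(1-t)M}$. Together these show that for any bounded sequence $(g_n)\subset\HH_\alpha$ the sequence $(\hat{\cL}_t g_n)$ is equicontinuous and asymptotically vanishing at infinity, so Arzel\`a-Ascoli plus a diagonal argument produces a $\|\cdot\|_\infty$-convergent subsequence. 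The Doeblin-Fortet inequality then yields the Ionescu-Tulcea-Marinescu decomposition of $\hat{\cL}_t$, with essential spectral radius $\leq\kappa^{-\alpha}<1$ and a finite-dimensional peripheral spectrum on $|z|=1$.

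Finally I would nail down the peripheral spectrum. Existence of a fixed function is obtained as a Ces\`aro limit $\psi_t=\lim_{N\to\infty}\tfrac1N\sum_{n=0}^{N-1}\hat{\cL}_t^n\mathds 1$ (convergence in $\HH_\alpha$ by quasi-compactness); the uniform bounds (ii)-(iv) from the Proposition give $\psi_t\in\HH_\alpha$, bounded above and bounded below by a positive constant on each $E_M$, and the decay $\psi_t(z)\to 0$ as $\Real z\to\infty$ is transferred from Proposition~\ref{operator}(6) through the Ces\`aro averaging. Normalising via $\hat{\cL}_t^*m_t=m_t$ gives $\int\psi_t\,dm_t=1$. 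For simplicity of $1$ and absence of other eigenvalues of modulus $1$: if $\hat{\cL}_t g=\gamma g$ with $|\gamma|=1$, then $|g|$ is $\hat{\cL}_t$-subinvariant, so $|g|/\psi_t$ is $F$-invariant modulo $m_t$-null sets; ergodicity of $m_t$ (Theorem~\ref{ergodic}) forces $|g|=c\,\psi_t$, and substituting back into $\hat{\cL}_t g=\gamma g$ gives $\gamma=1$ and $g\in\C\,\psi_t$. The main obstacle throughout is the non-compactness of $\cJ(F)$, and the entire argument hinges on converting the decay at infinity of $\hat{\cL}_t\mathds 1$ into an effective ``compactness at infinity'' for the operator.
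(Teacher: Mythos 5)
Your proposal follows essentially the same route as the paper: both are direct translations of Kotus--Urba\'nski's Theorem 5.4 to the present setting. The paper's sketch invokes Lemmas 5.1 and 5.2 of \cite{KU05} (which supply exactly the Lasota--Yorke estimate and the ``compactness at infinity'' you spell out, the latter resting on Proposition~\ref{operator}(6) and Corollary~\ref{cor_4.12}), extracts $\psi_t$ as a Ces\`aro limit, and disposes of the peripheral spectrum via ergodicity of $m_t$ (Theorem~\ref{ergodic}), precisely as you do.
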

We give a short sketch of the proof based on \cite{KU05}.
\begin{proof}[Sketch]
    Combining Lemmas 5.1 and 5.2 in \cite{KU05}, there exists a sequence of positive integers $\{n_k\}_{k\geq1}$ such that 
    \begin{equation}\label{psi}
    \dfrac{1}{n_k}\sum_{j=1}^{n_k}\hat{\cL}_t^j(\mathds{1})\to \psi_t,
    \end{equation}
    as $k\to\infty$, for some $\psi_t:\cJ(F)\to \R$ in the Banach space $\HH_\alpha$. By construction, $m_t$ is a fixed point of the operator conjugate to $\hat{\cL}_t$, which implies that 
    \[
1=\int \dfrac{1}{n}\sum_{j=0}^{n-1}\hat{\cL}_t^j(\mathds 1)dm_t\to \int\psi_tdm_t,\ \text{as }n\to\infty.
    \]
    Applying $\hat{\cL}_t$ to the limit in (\ref{psi}) we have that $\psi_t=\hat{\cL}_t\psi_t$, and Proposition \ref{operator} implies that $\lim_{z\to+\infty}\psi_t(z)=0$. 

    Analogous to the proof in \cite{KU05}, by the ergodicity of each iterate of $F$ (Theorem \ref{ergodic}), we have that if $\beta\in S^1$ is an eigenvalue of $\hat{\cL}_t$ and $\rho$ is its eigenfunction, then $\beta=1$ and $\rho\in\C\psi_t$.
    
\end{proof}

We are now in position to state finer stochastic properties of an $F$-invariant measure obtained from Theorem \ref{psi}. The following result sum-up the theorems \cite[Section 6]{KU05}, we refer to this section for more details.

\begin{theorem}
    For $t>1$, defined the measure $\mu:=\mu_t=\psi_tm_t$. Then 
    \begin{itemize}
        \item [(1)] $\mu$ is $F$-invariant, ergodic with respect to each iterate of $F$ and equivalent to the measure $m_t$. In particular, $\mu(\cJ_r(F))=1$.
        \item[(2)] The dynamical system $(F,\mu_t)$ is metrically exact. 
        \item[(3)] If $g\in\HH_\alpha$, $\alpha\in(0,1)$ are such that the \emph{asymptotic variance (or dispersion)} $\sigma^2(g)$
        \[
        \sigma^2(g) = \lim_{n\to\infty}\dfrac{1}{n}\int\left(\sum_{j=0}^{n-1}g\circ F^j-nEg\right)^2d\mu_t
        \]
        exists, then the sequence of random variables $\{g\circ F^n\}_{n=0}^{\infty}$ with respect to the probability measure $\mu_t$ satisfies the Central Limit Theorem. 
    \end{itemize}
\end{theorem}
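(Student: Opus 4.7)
The plan is to derive each of the three items from the spectral structure of $\hat{\cL}_t$ established in the preceding theorem, combined with the ergodicity and uniqueness already proved for the conformal measure $m_t$.

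For item~(1), the first step is to verify $F$-invariance directly via transfer operator duality. Since $\hat{\cL}_t\psi_t=\psi_t$ and $\hat{\cL}_t^{\ast}m_t=m_t$, and since the transfer operator satisfies the pull-out identity $\hat{\cL}_t((g\circ F)\varphi)=g\cdot\hat{\cL}_t\varphi$ for every $\varphi\in C_b(\cJ(F))$, for any bounded Borel $g$ one gets
\begin{equation*}
\int g\circ F \, d\mu_t = \int (g\circ F)\psi_t\,dm_t = \int \hat{\cL}_t((g\circ F)\psi_t)\,dm_t = \int g\,\psi_t\,dm_t = \int g\,d\mu_t.
\end{equation*}
Because $\psi_t$ is nowhere vanishing with $\int\psi_t\,dm_t=1$, the measures $\mu_t$ and $m_t$ are equivalent, so ergodicity of each iterate $F^j$ with respect to $m_t$ (Theorem~\ref{ergodic}) transfers to $\mu_t$, and the identity $\mu_t(\cJ_r(F))=1$ follows from $m_t(\cJ_r(F))=1$ (Lemma~\ref{gen_conf}).

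For item~(2), I would exploit the spectral gap of $\hat{\cL}_t$ on $\HH_\alpha$ furnished by the preceding theorem: $1$ is a simple isolated eigenvalue and no other eigenvalue has modulus $1$. Conjugating by $\psi_t$ produces the $\mu_t$-Perron--Frobenius operator $P_{\mu_t}(g):=\psi_t^{-1}\hat{\cL}_t(\psi_t\,g)$, which inherits the same spectral picture. Consequently $\|P_{\mu_t}^n g-\int g\,d\mu_t\|_\alpha$ decays exponentially for every $g\in\HH_\alpha$, and density of $\HH_\alpha$ in $L^1(\mu_t)$ combined with the $L^1(\mu_t)$-contractivity of $P_{\mu_t}$ extends the convergence to all $g\in L^1(\mu_t)$. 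Rokhlin's criterion (triviality of the tail $\sigma$-algebra $\bigcap_{n\geq 0} F^{-n}\mathcal{B}$) then yields metric exactness of $(F,\mu_t)$.

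For item~(3), the cleanest route is the Gordin--Liverani martingale approximation. Given $g\in\HH_\alpha$, write $h:=g-\int g\,d\mu_t$; the spectral gap gives exponential decay of $\|P_{\mu_t}^n h\|_\alpha$, so $\chi:=\sum_{n\geq 0}P_{\mu_t}^n h$ converges in $\HH_\alpha$ and provides the coboundary decomposition $h=M+\chi-\chi\circ F$, where $M:=\chi\circ F-P_{\mu_t}\chi\circ F$ is a reverse-time martingale difference for the filtration $\{F^{-n}\mathcal{B}\}_{n\geq 0}$. The assumed existence of $\sigma^2(g)$ forces $\sigma^2(g)=\int M^2\,d\mu_t$, and the classical CLT for stationary ergodic square-integrable martingale differences produces the stated convergence in distribution. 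The main obstacle is organizational rather than conceptual: one must verify that the non-compactness of $\cJ(F)$ and the infinite-to-one nature of $F$ do not prevent the spectral-gap machinery from closing up in $\HH_\alpha$. Proposition~\ref{operator}(6) and Corollary~\ref{cor_4.12} supply precisely the boundary decay needed to tame these issues, so the arguments of \cite[Section~6]{KU05} transfer to our setting with only minor modifications.
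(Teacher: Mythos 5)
The paper does not actually prove this theorem: it only cites \cite[Section~6]{KU05} and leaves the details there, so there is no "paper proof" to compare against directly. Your proposal fills in the standard route those cited theorems rest on, and its architecture matches what Kotus--Urba\'nski do: invariance via the duality $\int (g\circ F)\psi_t\,dm_t=\int\hat{\cL}_t((g\circ F)\psi_t)\,dm_t=\int g\psi_t\,dm_t$, equivalence $\mu_t\sim m_t$ via the nowhere-vanishing density $\psi_t$, ergodicity and $\mu_t(\cJ_r(F))=1$ inherited from $m_t$, exactness from the spectral gap of $\hat{\cL}_t$ on $\HH_\alpha$, and the CLT from Gordin's martingale approximation driven by the exponential decay of $\|P_{\mu_t}^n h\|_\alpha$. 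Your remark about Proposition~\ref{operator}(6) and Corollary~\ref{cor_4.12} supplying the tail control is precisely what is needed to push the compact-case machinery through to the non-compact $\cJ(F)$.

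One small but real slip in item~(3): the explicit coboundary decomposition is written incorrectly. With $\chi=\sum_{n\ge 0}P_{\mu_t}^n h$ one has $h=\chi-P_{\mu_t}\chi$, and the martingale difference should be $m:=\chi-(P_{\mu_t}\chi)\circ F$, since $E[\,\cdot\mid F^{-1}\mathcal{B}]=(P_{\mu_t}\,\cdot\,)\circ F$ gives $E[m\mid F^{-1}\mathcal{B}]=0$; the residual is the coboundary $(P_{\mu_t}\chi)\circ F-P_{\mu_t}\chi$, so $h=m+(P_{\mu_t}\chi)\circ F-P_{\mu_t}\chi$. Your formula $h=M+\chi-\chi\circ F$ with $M=\chi\circ F-(P_{\mu_t}\chi)\circ F$ is not a martingale-plus-coboundary decomposition of $h$ (in fact $M$ fails to be a martingale difference: $\chi\circ F$ is already $F^{-1}\mathcal{B}$-measurable, so $E[M\mid F^{-1}\mathcal{B}]=\chi\circ F-(P_{\mu_t}\chi)\circ F\ne 0$ in general). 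This is a local algebraic error rather than a conceptual gap; once fixed, the classical CLT for stationary ergodic square-integrable martingale-difference sequences applies as you indicate, with $\sigma^2(g)=\int m^2\,d\mu_t$.

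One further remark worth keeping in mind: for the passage from the spectral gap on $\HH_\alpha$ to exactness via Rokhlin's criterion, one needs $\HH_\alpha$ to be dense in $L^1(\mu_t)$ (not merely in $C_b$); this holds because $\mu_t$ is a finite Borel measure on the Polish space $\cJ(F)$ and $\HH_\alpha$ separates points and is closed under truncation, so the density can be justified by the usual Stone--Weierstrass/Lusin argument. You invoke it without comment, but it is harmless and is the same implicit step in \cite{KU05}.
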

As was mentioned by Kotus and Urba\'nski in \cite{KU05}, $\cJ_r(F)$ is the right set to study. Set 
\[
\cJ_{bd}(F)=\{z\in\cJ(F):\inf\{\Real(F^n(z)):n\geq0\}<+\infty\},
\]
it follows that $\cJ_{bd}(F)\subset \cJ_r(F)$. We are now in position to prove the following. 
\begin{theorem}\label{haus_bounded}
    We have $\HD(\cJ_{bd}(F))>1$. 
\end{theorem}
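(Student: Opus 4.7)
The plan is to construct an infinite conformal iterated function system (IFS) $\{\phi_k\}_{k\in K}$ whose limit set $\Lambda$ lies inside $\cJ_{bd}(F)$ and has $\HD(\Lambda)>1$, from which the theorem follows immediately since $\HD(\cJ_{bd}(F))\ge \HD(\Lambda)$. The key analytic input is that the series $\sum_{k\in\Z}|(F_k^{-1})'(z)|^t$, coming from the $2\pi i k$-translates of the bijection $f|_P$ from Lemma \ref{biyectividad}, behaves like $\sum |k|^{-t}$ (the computation already appears in the proof of Proposition \ref{operator}(2) and of Proposition \ref{tight}), so it diverges at $t=1$ but converges for $t>1$. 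This logarithmic divergence is exactly what will push the Bowen critical exponent above $1$.

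First, fix $z_0\in\cJ(F)$ with $|\Real(z_0)|$ bounded and a small $r>0$ so that $V:=D(z_0,2r)\subset Q\setminus \overline{B(\PC(F),2\delta)}$; hyperbolicity, i.e.\ $\dist(\cJ(F),\PC(F))>0$, makes this possible. For each $k\in\Z$ the map $F_k^{-1}:V\to Q$ defined by $F_k^{-1}(z)=f_*^{-1}(z+2\pi i k)$ is a well-defined holomorphic inverse branch, and from $f(z_k)=\tilde z+2\pi i k$ one gets $|e^{z_k}|\asymp 2\pi|k|$, so
\[
|(F_k^{-1})'(z)|=|\ell-e^{z_k}|^{-1}\asymp \frac{1}{|k|},\qquad |k|\to\infty.
\]
The image $F_k^{-1}(V)$ sits near a point $\zeta_k$ with $\Real(\zeta_k)\asymp \log|k|$, hence far from $V$ for large $|k|$. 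To return to $V$, invoke Proposition \ref{L:Expansion}: for each large $|k|$ there is an integer $N_k\ge 1$ and a holomorphic inverse branch $H_k$ of $F^{N_k}$ mapping a neighborhood of $\zeta_k$ univalently and contractively into $V$, with $F^{N_k}(H_k(\zeta_k))$ close to $z_0$. Composing, $\phi_k:=H_k\circ F_k^{-1}:V\to V$ is a conformal contraction, and for $k$ in a suitable infinite index set $K\subset\Z$ the images $\{\phi_k(\overline V)\}_{k\in K}$ are pairwise disjoint (achievable by thinning $K$ if necessary).

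Next, apply the Koebe distortion theorem on the enlarged disk $D(z_0,2r)$ to obtain uniform distortion for each $\phi_k$ and, crucially, a bound $|H_k'|\ge c_0>0$ that is independent of $k$ (because $\cJ_{bd}$ is the target of $H_k$ and the hyperbolic metric comparison on $Q\setminus\PC(F)$ is quasi-isometric to the Euclidean one near $z_0$). Hence
\[
|\phi_k'(z)|\asymp \frac{1}{|k|}\quad \text{uniformly in }z\in V,\ k\in K.
\]
By the Mauldin--Urbański theory of conformal infinite iterated function systems, the Hausdorff dimension of the limit set $\Lambda$ of $\{\phi_k\}_{k\in K}$ equals the Bowen exponent
\[
\theta=\inf\Bigl\{t\ge 0:\ \sum_{k\in K}\|\phi_k'\|_\infty^t<\infty\Bigr\}.
\]
Since $\sum_{k\in K}|k|^{-t}$ diverges at $t=1$ and converges at every $t>1$, we get $\theta\ge 1$; a standard refinement (or the direct argument that the topological pressure $P_{\mathrm{IFS}}(1)=+\infty$ while $P_{\mathrm{IFS}}(t)$ is finite and strictly decreasing for $t>1$) upgrades this to $\theta>1$. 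Finally, every $x\in\Lambda$ has all forward iterates $F^n(x)$ lying in the compact set $\overline V\cup \bigcup_{k\in K}F(\phi_k(\overline V))\cup\cdots$, which has bounded real part, so $\Lambda\subset\cJ_{bd}(F)$ and the theorem follows.

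The main obstacle is the bounded-distortion step: ensuring that the auxiliary contractions $H_k$ can be chosen with $|H_k'|$ bounded below uniformly in $k$, so that the derivative asymptotic $|\phi_k'|\asymp 1/|k|$ really carries through without being spoiled by an unbounded factor from $H_k$. This is handled by choosing the landing points in a fixed compact piece of $\cJ(F)\setminus\PC(F)$ and using the uniform Koebe estimates available there; the remaining combinatorics (thinning $K$ to ensure disjointness and open-set-condition) are routine once that derivative control is in place.
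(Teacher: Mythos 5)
Your construction takes a genuinely different route from the paper, but it has a gap at exactly the point you flag as the ``main obstacle,'' and that gap cannot be closed the way you suggest. You place the base disc $V$ around a point $z_0$ of \emph{bounded} real part, and then for each $k$ you compose $F_k^{-1}$ (whose image sits near a point $\zeta_k$ with $\Real(\zeta_k)\asymp\log|k|\to\infty$) with a return branch $H_k$ of some $F^{N_k}$ bringing the orbit back into $V$. For $H_k$ to land in the fixed compact region $V$ from a source escaping to infinity, the return time $N_k$ must tend to infinity with $|k|$; by Proposition~\ref{L:Expansion} the inverse branch of $F^{N_k}$ then satisfies $|H_k'|\leq L^{-1}\kappa^{-N_k}\to 0$, so the uniform lower bound $|H_k'|\geq c_0>0$ is false, not merely delicate. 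Consequently $\|\phi_k'\|$ is genuinely smaller than $|k|^{-1}$ by an unbounded factor, the series $\sum_k\|\phi_k'\|$ may well converge, and the Bowen exponent can drop below $1$. The ``hyperbolic metric quasi-isometry near $z_0$'' remark does not help, because the trouble is the unbounded source $\zeta_k$, not the target.

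The paper avoids this entirely by never trying to return to a fixed bounded region: it instead puts the IFS base set $S_R$ far to the right, at real part between $R$ and $4R$, and then observes that for every $k$ in the exponential window $[e^{2R},e^{3R}]$ the single branch $F_k^{-1}$ already maps $S_R$ into itself (so there is no auxiliary $H_k$ and no compounding contraction), with $|(F_k^{-1})'|\geq 1/(10k)$. The pressure at exponent $1$ is then $\geq \log\sum_{k=e^{2R}}^{e^{3R}}\tfrac{1}{10k}\asymp R>0$, which immediately gives $\HD(J_R)>1$. If you want to salvage your construction you would have to let $V$ itself drift to the right so that the indices with $\zeta_k\in V$ give a direct self-map, which is in effect what the paper does with $S_R$.
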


\begin{proof}
    The idea to prove this result, is to construct an iterated functions systems $H_R$, and obtain a lower bound for the Hausdorff dimension of their limit sets $J_R$. In this direction, consider the following set, 
    \[
    S_R = \{z\in P: R\leq \Real(z)\leq 4R, \text{ and } 0<\varepsilon_R<\Ima(z)<2\pi-\varepsilon_R\},
    \]
    where the value of $\varepsilon_R>0$ will be given through the proof. 

    Now, fix an integer $k\geq1$, and consider the inverse map $F_k^{-1}:S_R\to P$ given by the formula 
    \[
    F_k^{-1}(z) = f_*^{-1}(z+2k\pi i). 
    \]
    Recall $f_*^{-1}:\C\to P$ is the inverse of the bijection $f:P\to \C$ and $\lambda=c-(\ell-1)\log c$. To obtain the IFS on $S_R$, we need values $k\geq1$ and $R\geq1$ such that $F_k^{-1}(S_R)\subset S_R$. First, note that 
    \[
    z+2k\pi i = \ell F_k^{-1}(z) + \lambda -e^{F_k^{-1}(z)},
    \]
    hence 
    \[
    |z+2k\pi i|\leq |e^{F_k^{-1}(z)}|+\ell|F_k^{-1}(z)|+|\lambda|\leq 2e^{\Real(F_k^{-1}(z))}, 
    \]
    for every $k$ sufficiently large. Also, in the other direction, 
    \[
    |z+2k\pi i| \geq |e^{F_k^{-1}(z)}| - \ell|F_k^{-1}(z)|-|\lambda|\geq \dfrac{1}{2}e^{\Real(F_k^{-1}(z))}, 
    \]
    so, if $k\in[e^{2R},e^{3R}]$, with $R>>1$, then 
    \[
    R\leq \Real(F_k^{-1}(z))\leq 4R.
    \]
    Now, if we take $w=x+iy\in P\setminus (S_R\cap \{R\leq \Real(z)\leq 4R\})$, we have 
    \begin{align*}
        |\Ima(f(w))| & = |\ell\Ima(w) + \Ima(\lambda) - \Ima(e^w)| \\ 
            & \leq e^x|\sin y| + 2|y| + |\Ima(\lambda)| \\
            & \leq e^{4R}\sin (\varepsilon_R) + 4\pi + |\Ima(\lambda)| \\
            & \leq \dfrac{1}{2}e^R,
    \end{align*}
    for $\varepsilon_R>0$ sufficiently small. Hence, taking such $\varepsilon_R>0$ and $k\in[e^R,e^{4R}]$, we have the desired inclusion 
    \[
    F_k^{-1}(S_R)\subset S_R. 
    \]
    This way, for each $k\in [e^R,e^{4R}]$, $R>1$ sufficiently large, we have constructed an IFS over $S_R$ given by 
    \[
    H_R:= \{F_k^{-1}:S_R\to S_R\}. 
    \]
    Following the ideas of Mauldin \& Urba\'nski for IFS's, it is not difficult to see that each IFS $H_R$ satisfies the requirements and hence the results in \cite{MaUr}. Let $J_R$ the limit set of the system $H_R$. By construction, $J_R\subset \cJ_{bd}(F)$, so it is enough to show that $\HD(J_R)>1$. To estimate this, note that if $w=f(w) = \ell z + \lambda - e^z$, then 
    \begin{equation}\label{deri_inv}
    (f_*^{-1})'(w) = \dfrac{1}{w - \ell f_*^{-1}(w) - \lambda + \ell}. 
    \end{equation}
    Let $z\in S_R$, and apply (\ref{deri_inv}), then 
    \begin{align}\label{HDHR}
        |(F_k^{-1})'(z)| & = \dfrac{1}{|(z+2k\pi i) - \ell f_*^{-1}(z+2k\pi i) + \ell -\lambda|}\nonumber \\
            & \geq \dfrac{1}{|z+2k\pi i| + \ell|f_*^{-1}(z+2k\pi i)| + \ell + |\lambda|}\nonumber \\
            & \geq \dfrac{1}{|z+2k\pi i| + 8R + 2\ell\pi + |\lambda|} \\
            & \geq \dfrac{1}{9k + 4\ell R + 2\ell \pi +|\lambda|}\nonumber\\
            & \geq \dfrac{1}{10k},\nonumber
    \end{align}
    taking $R>1$ sufficiently large and $k\in [e^R,e^{4R}]$. Let $P_R(t)$ be the topological pressure of the IFS $H_R$ evaluated at $t>0$. Then by (\ref{HDHR})
    \begin{align*}
        P_R(1) & = \log \left(\sum_{k=e^{2R}}^{e^{3R}}|(F_k^{-1})'(z)|\right)   \geq \log \left(\sum_{e^{2R}}^{e^{3R}}\dfrac{1}{10k}\right) 
          = -\log 10 + R > 0,
    \end{align*}
    which implies that $\HD(J_R)>1$ and then $\HD(\cJ_{bd}(F)>1$. 
\end{proof}

When the base space (in this case, the Julia set $\cJ(f)$) is compact, the common value of the limits $\overline{P}(t,F)$ and $\underline{P}(t,F)$, denoted simply by $P(t)$, obtained in Proposition \ref{operator}, corresponds to the well-known \emph{geometric pressure function} for $t>1$. Moreover, when this is the case, it is possible to get a link between the ergodic theory and the geometry of the embedding of the Julia set in the Euclidean space $\R^2$. This link is among the concepts of \emph{pressure function, entropy, Hausdorff and Package dimension, and the Lyapunov characteristic exponent}. The entropy and the Lyapunov exponent lose meaning when the map $F$ is transcendental or the Julia set is non-compact. 

Nevertheless, the properties for $P(t)$ in Proposition \ref{operator} and the ones listed below, allow us to call this function, the corresponding geometric pressure function of exponent $t>1$. 

\begin{proposition}\label{zero_press}
    The function $t\to P(t),$ $t\geq 0$ satisfies
    \begin{enumerate}
        \item There exists $t\in (0,1)$ such that $0\leq P(t)<+\infty$
        \item There exists exactly one $t>1$ such that $P(t)=0.$
        
    \end{enumerate}
\end{proposition}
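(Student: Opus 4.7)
The plan is to combine the qualitative properties of $P$ from Proposition~\ref{operator}(5) (continuity, convexity, strict monotonicity on $(1,\infty)$, and $\lim_{t\to+\infty}P(t)=-\infty$) with the iterated function system $H_R$ constructed in the proof of Theorem~\ref{haus_bounded}, whose IFS pressure at $t=1$ satisfies $P_R(1)\ge R-\log 10>0$ for $R\gg 1$ and whose limit set $J_R$ is contained in $\cJ_{bd}(F)\subset\cJ_r(F)$.

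For item~(2), since $P$ is continuous and strictly decreasing on $(1,\infty)$ with $P(t)\to-\infty$, both existence and uniqueness of a zero of $P$ in $(1,\infty)$ reduce to producing a single $t_{*}>1$ with $P(t_{*})>0$. To obtain such $t_{*}$, I use the IFS: by continuity of $P_R$ at $1$ and $P_R(1)>0$ there exists $t_{*}>1$ close to $1$ with $P_R(t_{*})>0$. The Perron--Frobenius sum $\mathscr L_t^n\mathds{1}(z)$ is taken over \emph{all} preimages, so it dominates the IFS sub-sum indexed by the branches of $H_R$; hence $P(t_{*})\ge P_R(t_{*})>0$, and strict monotonicity plus the intermediate value theorem yield a unique zero of $P$ in $(1,\infty)$.

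For item~(1), the lower bound $P(t_0)\ge 0$ for every $t_0\in(0,1)$ is immediate from the same comparison: $P_R$ is strictly decreasing and $P_R(1)>0$, so $P_R(t_0)>0$ on $(0,1)$, giving $P(t_0)\ge P_R(t_0)>0$. The delicate point is the upper bound $P(t_0)<+\infty$ for some $t_0\in(0,1)$: the point-wise estimate of Proposition~\ref{operator}(2) breaks down because the dominating series $\sum_{k}|k|^{-t_0}$ diverges when $t_0\le 1$. I would bypass this by replacing the full transfer operator with the bounded-dynamics pressure. Concretely, one constructs a Patterson--Sullivan type conformal measure $m_{t_0}$ on the compact invariant subsystem $K_N=\bigcap_{j\ge 0}F^{-j}(E_N)$ (for $N$ chosen large enough depending on $t_0$), exactly as in Section~3 of the excerpt, and uses the tightness of $\{m_n\}$ from Proposition~\ref{tight} together with Corollary~\ref{cor_4.12} to pass to a limit $m_{t_0}$ on $\cJ_{bd}(F)$. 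The eigenvalue associated with this limit measure provides a finite value for $P(t_0)$.

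The main technical obstacle is precisely this upper bound in item~(1): the operator $\mathscr L_{t_0}$ is not bounded on $C_b(\cJ(F))$ when $t_0\le 1$, so the pressure has to be recovered from compact approximations rather than from the direct series. All remaining steps---strict monotonicity and the intermediate value theorem for uniqueness and existence in item~(2), the IFS lower bound in item~(1), and the Patterson--Sullivan construction adapted to $t_0<1$---use only tools already developed in the preceding sections of the excerpt.
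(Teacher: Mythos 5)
Your argument for item (2) is essentially the paper's: compare $P$ with the pressure $P_R$ of the IFS $H_R$ from Theorem \ref{haus_bounded} to produce a point $t_*>1$ where $P\geq 0$, then invoke continuity, strict monotonicity and $P(t)\to-\infty$ from Proposition \ref{operator}(5). No objection there (the paper gets its point from $P(\HD(J_R))\geq P_R(\HD(J_R))=0$ rather than from continuity of $P_R$ near $1$, but both routes work, and the domination of the transfer-operator sum by the IFS sub-sum is exactly the comparison the paper uses implicitly).

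The genuine gap is in item (1). You take the interval $(0,1)$ at face value and then try to manufacture a finite value of $P(t_0)$ for $t_0<1$; this cannot succeed with the paper's definition of the pressure. For any $z\in\cJ(F)$ the preimages $z_k=f_*^{-1}(\tilde z+2\pi ik)$ lie in the strip $P$ with $\Real(z_k)=O(\log|k|)$, hence $|F'(z_k)|=|\ell-e^{z_k}|=|\ell-\lambda-\ell z_k+\tilde z+2\pi ik|\asymp|k|$ as $|k|\to\infty$; consequently $\mathscr L_{t}\mathds 1(z)=\sum_k|\ell-e^{z_k}|^{-t}=+\infty$ for every $t\leq 1$, and therefore $\underline{P}(t)=\overline{P}(t)=+\infty$ on $(0,1]$. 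So the statement with $t\in(0,1)$ is a misprint (compare the paper's own proof, and the use of the proposition in Theorem \ref{bowen}, where the zero $\eta$ is $>1$): what is actually proved, and all that is needed, is the existence of some $t>1$ (in fact $t\in(1,2)$), namely $t=\HD(J_R)$ from Theorem \ref{haus_bounded}, with $0\leq P(t)<+\infty$; this follows from $P(\HD(J_R))\geq P_R(\HD(J_R))=0$ (Bowen's formula for the conformal IFS $H_R$ via Mauldin--Urba\'nski) together with finiteness from Proposition \ref{operator}(4). Your proposed repair --- extracting ``the eigenvalue'' of a Patterson--Sullivan construction on a compact subsystem $K_N$ --- does not close the gap: the quantity $\lim_n P_n(t)$ built from the subsystems $K_n$ is precisely what the paper identifies with $\log\alpha_t=P(t)$ when $t>1$, and for $t\leq 1$ it diverges as well (inside $E_n$ there are of order $e^{n}$ admissible inverse branches, with $|F'(z_k)|\asymp|k|$, so already the one-step sum is of order $e^{n(1-t)}$), while fixing a single $N$ yields an $N$-dependent number that is not $P(t_0)$. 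The correct fix is simply to prove the corrected statement with $t=\HD(J_R)$, for which the comparison $P\geq P_R$ you already set up in item (2) is all that is required.
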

\begin{proof}
    Following \cite{KU05}, we consider the iterated function system $H_R$ defined in the above theorem. For $R>1$ large enough $\HD(J_R)>1$. We have $P(\HD(J_R))\geq P_R(\HD(J_R))=0$, combining this with (4) of Proposition \ref{operator} we conclude (1). The item (2) is a consequence of the remaining properties of $P(t)$. 
\end{proof}

On the other hand, since $\cJ(F)$ is not a compact set (in the complex plane $\C$), we cannot apply the Ergodic Birkhoff Theorem to obtain the Lyapunov characteristic exponent as the limit of the Birkhoff sums applied to a point of $\cJ(F)$.  Even so, using the expanding character of $F$ when $z\to -\infty$, and Corollary \ref{cor_4.12} we can define this exponent directly as an integral. 

We set the following auxiliary subsets of each $E_n$, $n>0$. 
\[
A_n = \{z\in\cJ(F):n\leq \Real z\leq n+1\}. 
\]

\begin{Lemma}\label{lyap}
    For $t>1$, $\int\log |F'|d\mu_t<+\infty$, i.e., the Lyapunov exponent of $F$, $\chi(F)$ with respect to $\mu_t$, is well-defined.
\end{Lemma}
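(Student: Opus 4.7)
The strategy is to split the integral $\int \log|F'| \, d\mu_t$ into a bounded piece and a tail piece, and to control each using the exponential decay of the conformal measure near infinity proved earlier.

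First, I would verify that $\log|F'|$ is bounded below on $\cJ(F)$, so that only the positive part of the integral is at risk of diverging. Since $F'(z) = \ell - e^z$, the unique critical point of $F$ in the cylinder $Q$ is $\Pi(\log \ell)$, which belongs to $\PC(F)$. Hyperbolicity (i.e.\ $\dist(\cJ(F), \PC(F)) > 0$) then forces $|F'|$ to be bounded away from zero on every bounded portion of $\cJ(F)$. On the other hand, $|F'(z)| \to \ell$ as $\Real z \to -\infty$, so $|F'|$ is in fact uniformly bounded away from zero on \emph{all} of $\cJ(F)$. Consequently $\log^-|F'|$ is a bounded function and $\int \log^-|F'| \, d\mu_t < \infty$ trivially.

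For the positive part, I would fix a large integer $n_0$ and decompose
\[
\cJ(F) \,=\, \bigl(\cJ(F) \cap E_{n_0}\bigr) \cup \bigcup_{n \geq n_0} A_n,
\]
with $A_n$ the strips defined before the lemma. On the bounded piece $\cJ(F) \cap E_{n_0}$, $\log|F'|$ is bounded, so its contribution is finite. On each slice $A_n$, the trivial estimate $|F'(z)| = |\ell - e^z| \leq \ell + e^{n+1}$ yields $\log^+|F'(z)| \leq n + O(1)$. By Corollary \ref{cor_4.12}, $m_t(A_n) \leq m_t(E_n^c) \leq C e^{(1-t)n}$, and since the eigenfunction $\psi_t$ from the previous theorem satisfies $\|\psi_t\|_\infty < \infty$, we get $\mu_t(A_n) \leq \|\psi_t\|_\infty \, m_t(A_n) \leq C' e^{(1-t)n}$. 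Summing,
\[
\int \log^+|F'| \, d\mu_t \,\leq\, O(1) \,+\, C' \sum_{n \geq n_0} \bigl(n + O(1)\bigr) e^{(1-t)n} \,<\, +\infty,
\]
because $1 - t < 0$.

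There is no serious obstacle here: once the decay estimate of Corollary \ref{cor_4.12} is in hand and one observes that $|F'|$ grows no faster than $e^{\Real z}$, the tail estimate reduces to a convergent geometric-type sum. The only subtle point is controlling the left tail, where one might initially fear that $|F'|$ could vanish, but the explicit form $F'(z) = \ell - e^z$ together with hyperbolicity rules this out immediately.
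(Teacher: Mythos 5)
Your proof is correct and follows essentially the same route as the paper: both arguments use the boundedness of $\psi_t$ to reduce to $m_t$, decompose $\cJ(F)$ into the strips $A_n$, bound $\log|F'|$ on $A_n$ by $n+O(1)$, and then invoke the exponential decay $m_t(E_n^c)\leq Ce^{(1-t)n}$ from Corollary~\ref{cor_4.12} to get a convergent series. Your treatment of the negative part of $\log|F'|$ (via hyperbolicity and the explicit form of $F'$) is a small bit of bookkeeping the paper leaves implicit, but it is not a different approach.
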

\begin{proof}
    Given that 
    \[
    \lim_{z\to+\infty}\psi_t(z)=0, 
    \]
    it follows that $\psi_t:\cJ(F)\to(0,\infty)$ is bounded. From the form of the derivative of $f$, (\ref{deri}), and Corollary \ref{cor_4.12}, we have 
    \begin{align*}
        \int\log|F'|d\mu_t & \preceq \int\log|F'|dm_t \\
        & = \sum_{n=1}^\infty \int_{A_n}\log|F'|dm_t \\
        & \leq \sum_{n=1}^\infty m_t(E_n^c)\log (1+e^n) \\
        &\leq \sum_{n=1}^\infty C\exp \left(1-tn\right)(1+n)<+\infty.
    \end{align*}
    We have then a well-defined Lyapunov exponent $\chi(F)=\int\log|F'|d\mu_t<\infty$ with respect to $\mu_t$. 
\end{proof}

Bowen's formula is one of the most significant results in thermodynamic formalism, it relates the Hausdorff dimension of a hyperbolic invariant repeller with the corresponding pressure function. In this case, although the pressure function $P(t)$ is defined on the whole Julia set, and not just the residual Julia set $\cJ_r$, the following version of Bowen's formula is a justification of why to study the residual Julia set. For completeness and given the nature of the next result we add its proof following the arguments in \cite{KU05}.

\begin{theorem}[Bowen's Formula]\label{bowen}
    The Hausdorff dimension of the residual Julia set $\cJ_r(F)$ is the unique zero of the pressure function $t\mapsto P(t)$, for $t>1$. 
\end{theorem}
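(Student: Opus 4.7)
The plan is the standard two-sided Bowen argument, adapted to the non-compact setting via the control on $\cJ_r(F)$ coming from Lemma \ref{gen_conf} and Corollary \ref{cor_4.12}. Let $t_0>1$ denote the unique zero of $P(t)$ guaranteed by Proposition \ref{zero_press}; I will prove the two inequalities $\HD(\cJ_r(F))\le t_0$ and $\HD(\cJ_r(F))\ge t_0$ separately.

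For the upper bound, fix $t>t_0$ so that $P(t)<0$. First, I decompose $\cJ_r(F)=\bigcup_N\cJ_{r,N}(F)$ exactly as in the proof of Theorem \ref{ergodic}, so every $z\in\cJ_{r,N}(F)$ admits an unbounded return sequence $\{n_k(z)\}$ with $F^{n_k}(z)$ accumulating inside the bounded strip $\{\Real<N\}$. On the Koebe-admissible ball $B(F^{n_k}(z),2\delta)$, with $\delta=\tfrac12\dist(\cJ(F),\PC(F))$, the inverse branch $F_z^{-n_k}$ is well defined with bounded distortion, giving a ball around $z$ of radius $r_k\asymp\delta\,|(F^{n_k})'(z)|^{-1}$. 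Extracting a Besicovitch subcover of bounded multiplicity and combining Koebe with the pressure bound
\[
\sum_i r_i^{\,t}\le C\sum_i |(F^{n_i})'(z_i)|^{-t}\le C'\sum_{n\ge n_0}e^{(P(t)+\varepsilon)n},
\]
which is summable for $\varepsilon>0$ chosen with $P(t)+\varepsilon<0$, the $t$-Hausdorff content of $\cJ_{r,N}(F)$ can be driven arbitrarily small by pushing $n_0\to\infty$. Summing over $N$ and letting $t\searrow t_0$ yields $\HD(\cJ_r(F))\le t_0$.

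For the lower bound, I exploit the $(t_0,1)$-conformal measure $m_{t_0}$: since $P(t_0)=0$, Theorem \ref{lim_meas} provides it, and Lemma \ref{gen_conf} gives $m_{t_0}(\cJ_r(F))=1$. Fixing $z\in\cJ_{r,N}(F)$ and a return subsequence $\{n_k\}$ as above, Koebe distortion on $B(F^{n_k}(z),\delta)$ combined with $(t_0,1)$-conformality yields
\[
m_{t_0}\bigl(B(z,r_k)\bigr)\le K_N\,r_k^{\,t_0},\qquad r_k\asymp|(F^{n_k})'(z)|^{-1}\to 0.
\]
Interpolating between consecutive return times via the uniform expansion of Proposition \ref{L:Expansion} shows that $\limsup_{r\to 0}m_{t_0}(B(z,r))/r^{t_0}<\infty$ on a set of full $m_{t_0}$-measure; the mass distribution principle then gives $\HH^{t_0}(\cJ_r(F))>0$ and hence $\HD(\cJ_r(F))\ge t_0$.

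The main technical obstacle is the upper bound: because $\cJ(F)$ is non-compact and $|F'|$ is unbounded near infinity, there is no single finite scheme covering $\cJ_r(F)$, and one must instead work level-by-level on the $\cJ_{r,N}(F)$, combining the Koebe constants $K_N$, the pressure tail bound, and the Besicovitch multiplicity into uniform estimates before summing. The reduction from $\cJ(F)$ to $\cJ_r(F)$ is essential here, since points in $I_\infty(F)$ eventually leave every bounded region and therefore escape every controlled Koebe neighborhood, whereas points in $\cJ_r(F)$ return infinitely often to a region where pressure and distortion interact uniformly. The lower bound, by contrast, is more routine once the residual support and the normalization $e^{P(t_0)}=1$ are in hand, since $m_{t_0}$ automatically carries the correct $t_0$-scaling.
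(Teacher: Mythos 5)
Your upper bound is essentially the paper's argument, modulo presentation: the paper covers $X_k=\{z:\liminf\Real(F^n(z))<k\}$ directly by sets of the form $F_x^{-j}(B(z_i,\delta))$ with $x\in F^{-j}(z_i)$ (preimages of a finite cover of the strip) so that the $t$-Hausdorff content is read off from $\mathscr L_t^j\mathds 1$, whereas you invoke Besicovitch. To make your chain of inequalities rigorous you would still have to regroup the Besicovitch balls by return time and match them to preimages of finitely many base points before the pressure bound kicks in, which is exactly what the direct cover achieves with less fuss; but the idea is the same.

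Your lower bound, however, has a real gap. You obtain $m_{t_0}(B(z,r_k))\preceq r_k^{t_0}$ only along the sequence of \emph{return-time} radii $r_k\asymp|(F^{n_k})'(z)|^{-1}$, and then claim that ``interpolating between consecutive return times via the uniform expansion of Proposition \ref{L:Expansion}'' gives $\limsup_{r\to0}m_{t_0}(B(z,r))/r^{t_0}<\infty$. This does not follow: for $r_{k+1}<r<r_k$ the trivial bound $m_{t_0}(B(z,r))\leq m_{t_0}(B(z,r_k))\preceq r_k^{t_0}$ turns into an $r^{t_0}$-bound only if the ratio $r_k/r_{k+1}=|(F^{\,n_{k+1}-n_k})'(F^{n_k}(z))|$ is uniformly bounded, and there is no such bound --- between returns the orbit may go far to the right where $|F'|=|\ell-e^z|$ is arbitrarily large, and Proposition \ref{L:Expansion} only gives a \emph{lower} bound on expansion, which makes the gaps worse, not better. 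This is precisely why the paper switches to the $F$-invariant measure $\mu_\eta=\psi_\eta m_\eta$: combining Lemma \ref{lyap} (finiteness of the Lyapunov exponent $\chi(F)$), Birkhoff's ergodic theorem, and Egorov's theorem produces a positive-measure set $Y$ on which $\bigl|\tfrac1n\log|(F^n)'(x)|-\chi\bigr|<\varepsilon$ for \emph{all} $n\geq k$, not just return times. That uniform two-sided control at every integer time is exactly what replaces your interpolation step: the paper picks, for each small $r$, the \emph{largest} $n$ with $B(x,r)\subset F_x^{-n}(B(F^n(x),\delta))$, compares $r$ to $|(F^{n+1})'(x)|^{-1}$ via $\tfrac14$-Koebe, and uses the Birkhoff bound on both $|(F^n)'(x)|$ and $|(F^{n+1})'(x)|$ to deduce $m_\eta(B(x,r))\preceq r^{\eta-2\varepsilon/(\chi-\varepsilon)}$, then sends $\varepsilon\to0$. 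You should replace your interpolation claim with this Birkhoff--Egorov device (and cite Lemma \ref{lyap} and the invariant measure from the preceding section), since without it the lower bound is not established.
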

\begin{proof}
    Let $\eta>1$ be the unique zero of the pressure function. Note that $\eta>1$ is well-defined from the general properties of the pressure function $P(t)$ as presented in Proposition~\ref{zero_press}.
    For $k\geq1$, let 
    \[
    X_k=\{z\in\cJ(F):\liminf_{n\to\infty} \Real (F^n(z)) <k\}. 
    \]
    And let $z\in\cJ(F)$ be an arbitrary point. Fix $t>\eta$, and take $n\geq1$ large enough such that 
    \[
    \dfrac{1}{j} \log \sum_{x\in F^{-j}(z)}|(F^j)'(x)|^{-t}\leq \dfrac{1}{2} P(t), \forall j\geq n,
    \]
    and note that $P(t)<0$ since $P(t)$ is strictly decreasing. Consider a finite cover of $E_k$ by open balls $B(z_1,\delta)$, $B(z_2,\delta)$, ...,$B(z_l,\delta)$. It is not difficult to see that 
    \[
    X_k\subset \bigcup_{j=n}^\infty \bigcup_{x\in F^{-j}(z_j)}F_x^{-j}(E_k),
    \]
    where $\delta>0$ is such that $F_x^{-j}$ are the corresponding local inverse branch, as previously defined. If $\HH^t(\cdot)$ denotes the $t$-dimensional Hausdorff measure, we conclude that 
    \[
    \HH^t(X_k)\leq \lim_{n\to\infty}\sum_{j=n}^\infty \sum_{i=1}^l \sum_{x\in F^{-j}(z_i)}K^t|(F^j)'(x)|^{-t}(2\delta K)^t\leq l(2\delta K)^t\lim_{n\to\infty}\sum_{j=n}^\infty \exp\left(\dfrac{1}{2}P(t)j\right) = 0,
    \]
    since $P(t)<0$, $t>\eta$. It follows that $\HD(X_k)\leq t$. But $\cJ_r(F)=\cup_{k=0}^\infty X_k$, we have $\HD(\cJ_r(F))\leq t$. Letting $t\searrow\eta$ gives the inequality $\HD(\cJ_r(F))\leq\eta$. 

    Now to prove the opposite inequality, we fix $\varepsilon>0$. We know that $\mu_\eta(\cJ_r(F))=1$ and that $\mu_\eta$ is ergodic and $F$-invariant, so from the Birkhoff's ergodic theorem and Egorov's theorem, there exist a Borel set $Y\subset\cJ_r(F)$ and an integer $k\geq1$ such that $\mu_\eta(Y)\geq\frac{1}{2}$ and for every $x\in Y$, and every $n\geq k$, 
    \begin{equation}\label{lim_lyap}
    \left|\dfrac{1}{n}\log|(F^n)'(x)|-\chi(F)\right|<\varepsilon, 
    \end{equation}
    where $\chi(F)=\inf\log|F'|d\mu_\eta$ is the finite Lyapunov exponent due to Lemma \ref{lyap}. Put $\nu=m_{\eta|Y}$. Given that $x\in Y$, and $0<r<\delta$, let $n\geq0$ be the largest integer such that 
    \begin{equation}\label{contention7.2}
        B(x,r)\subset F_x^{-n}(B(F^n(x),\delta)). 
    \end{equation}
    Then $B(x,r)$ is not contained in $F_x^{-(n+1)}(B(F^{n+1}(x),\delta))$, applying $\dfrac{1}{4}$-Koebe's distortion theorem, we get 
    \begin{equation}\label{r-1/4Koebe}
        4r\geq \delta|(F^{n+1})'(x)|^{-1}. 
    \end{equation}
    By taking $r>0$ sufficiently small, we may assume that $n\geq k$. Combining (\ref{contention7.2}), $P(\eta)=0$ applied to the conformality of $m_\eta$, Koebe's Distortion Theorem and (\ref{r-1/4Koebe}), we obtain
    \begin{align*}
        m_\eta(B(x,r) & \leq m_\eta(F_x^{-n}(B(F^n(x),\delta))) \\
        & \asymp |(F_x^{-n})'(F^n(x))|^{\eta}m_\eta(B(F^n(x),\delta)) \\ 
        & \preceq |(F_x^{-n})'(F^n(x))|^\eta \\
        & \preceq r^\eta \dfrac{|(F^{n+1})'(x)|^\eta}{|(F^n)'(x)|^\eta}. 
    \end{align*}
    From (\ref{lim_lyap}), we deduce that 
    \begin{equation}\label{eq7.4}
        m_\eta(B(x,r))\leq r^\eta e^{(\chi+\varepsilon)(n+1)}e^{-(\chi-\varepsilon)n}\asymp r^\eta e^{2\varepsilon n}. 
    \end{equation}
    Combining (\ref{contention7.2}), Koebe's distortion theorem and (\ref{lim_lyap}), we obtain 
    \[
    r\leq K\delta|(F^n)'(x)|^{-1}\leq K\delta e^{-(\chi-\varepsilon)n}.
    \]
    This way, $e^{(\chi-\varepsilon)n}\preceq r^{-1}$, hence $e^{2\varepsilon n}\preceq r^{-\frac{2\varepsilon}{\chi-\varepsilon}}$, which combined with (\ref{eq7.4}) imply that 
    \[
    \nu(B(x,r)) \leq m_\eta(x,r))\preceq r^{\eta - \frac{2\varepsilon}{\chi-\varepsilon}}.
    \]
    So, $\HD(\cJ_r(F))\geq\HD(\nu)\geq\eta- \frac{2\varepsilon}{\chi-\varepsilon}$. Finally, if we make $\varepsilon\to0$, we have that $\HD(\cJ_r(F)\geq\eta$, which concludes the proof.
\end{proof}

The proof of the following results follows verbatim (in our context) as in \cite[Proposition 7.4]{KU05}.
\begin{proposition}\label{prop7.4}
    Let $P^h(\cdot)$ denotes the $h$-dimensional packing measure. We have $P^h(\cJ_r(F))=\infty$. In fact, $P^h(G)=\infty$ for every open non-empty subset of $\cJ_r(F)$. 
\end{proposition}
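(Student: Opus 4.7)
The plan is to invoke a standard density criterion for packing measure: if $\nu$ is a finite Borel measure on a metric space and $\liminf_{r \to 0^{+}} \nu(B(x,r))/r^{h} = 0$ for every $x$ in a Borel set $Y$ with $\nu(Y) > 0$, then $P^{h}(Y) = +\infty$ (see \cite{MaUr}). I would apply this with $\nu = m_{h}$, where $h = \HD(\cJ_{r}(F))$ is the unique zero of $t \mapsto P(t)$ produced by Theorem~\ref{bowen}, and prove that the vanishing liminf holds for $\mu_{h}$-a.e.\ point of $\cJ_{r}(F)$.

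Pick a generic $x \in \cJ_{r}(F)$ for the ergodic $F$-invariant probability measure $\mu_{h}$. Since $\mu_{h} \sim m_{h}$ and, as explained below, $m_{h}(E_{M}^{c}) > 0$ for every $M$, Birkhoff's theorem (or Poincar\'e recurrence) yields a sequence $n_{k} \to \infty$ with $\Real(F^{n_{k}}(x)) \to \infty$. Set $\delta = \tfrac{1}{2}\dist(\cJ(F),\PC(F)) > 0$, which is positive by Theorem~\ref{exBaker}, and put $r_{k} = \delta\,|(F^{n_{k}})'(x)|^{-1}$. The holomorphic inverse branch $F_{x}^{-n_{k}} : B(F^{n_{k}}(x), 2\delta) \to Q$ sending $F^{n_{k}}(x)$ back to $x$ is well defined, and Koebe's distortion theorem together with the $\tfrac{1}{4}$-theorem yields
\[
B(x, r_{k}/K) \subset F_{x}^{-n_{k}}(B(F^{n_{k}}(x), \delta)) \subset B(x, K r_{k})
\]
with a uniform constant $K$. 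Conformality of $m_{h}$ (using $P(h)=0$) and bounded distortion then give
\[
\frac{m_{h}(B(x, r_{k}/K))}{(r_{k}/K)^{h}} \preceq \frac{m_{h}(B(F^{n_{k}}(x), \delta))}{\delta^{h}},
\]
and Corollary~\ref{cor_4.12} bounds the right-hand side by a constant multiple of $\exp\bigl((1-h)(\Real F^{n_{k}}(x) - \delta)\bigr)$, which tends to $0$ since $h > 1$. Thus the liminf vanishes $\mu_{h}$-a.e.\ on $\cJ_{r}(F)$, and the density criterion gives $P^{h}(\cJ_{r}(F)) = +\infty$.

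For the local statement, let $G \subset \cJ_{r}(F)$ be open and non-empty. Every ball centered on $\cJ(F)$ of radius at most $\delta$ carries strictly positive $m_{h}$-mass, since conformality forces $m_{h}$ to be positive on all small balls meeting $\cJ(F)$; combined with topological transitivity of $F$ on $\cJ(F)$ coming from the expansion in Proposition~\ref{L:Expansion}, this gives $\mu_{h}(G) > 0$, and the criterion applies to $Y = G \cap \cJ_{r}(F)$. The main obstacle I anticipate is verifying that $\mu_{h}$ actually gives mass to strips arbitrarily far to the right, ruling out concentration of mass near the attracting cycle; this is handled by the positivity of $\mathscr L_{h}\mathds 1$ on $\cJ(F)$ (Proposition~\ref{operator}) plus conformal pullback, ensuring $m_{h}(E_{M}^{c}) > 0$ for every $M$. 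Once that is in hand the argument is pure Koebe bookkeeping, exactly as in \cite[Proposition~7.4]{KU05}.
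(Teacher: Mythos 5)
Your proposal is correct and matches the approach the paper implicitly takes by deferring to \cite[Proposition 7.4]{KU05}: invoke the lower-density criterion for packing measure, use Birkhoff/Poincar\'e recurrence on $\mu_h$ to produce times $n_k$ with $\Real(F^{n_k}(x))\to\infty$ for a.e.\ $x$, pull back a ball of fixed radius $\delta$ around $F^{n_k}(x)$ via Koebe and conformality (with $P(h)=0$), and feed the resulting estimate $m_h(B(x,r_k))/r_k^h\preceq m_h(B(F^{n_k}(x),\delta))$ into Corollary~\ref{cor_4.12} to force the liminf density to vanish. The two small points you flag — that $m_h(E_M^c)>0$ for every $M$, and that $m_h(G)>0$ for any open $G$ meeting $\cJ_r(F)$ — both follow from the full topological support of the conformal measure $m_h$ on $\cJ(F)$ (standard from density of backward orbits plus conformality), so the argument is complete.
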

Combining Theorem \ref{haus_bounded} and Proposition \ref{prop7.4} we have. 

\begin{Corollary}
    It holds $1<\HD(\cJ_r(F))<2$.
\end{Corollary}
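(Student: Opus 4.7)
The corollary is a direct combination of the two results immediately preceding it, used for the two opposite inequalities. The lower bound $\HD(\cJ_r(F))>1$ follows at once from the inclusion $\cJ_{bd}(F)\subset\cJ_r(F)$: a point whose forward orbit has bounded real parts certainly cannot escape to $+\infty$. Monotonicity of Hausdorff dimension then gives $\HD(\cJ_r(F))\geq \HD(\cJ_{bd}(F))$, and the right-hand side is strictly larger than $1$ by Theorem~\ref{haus_bounded}. No new dynamical ingredient is needed for this direction.

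For the upper bound $\HD(\cJ_r(F))<2$ the plan is a contradiction argument using Proposition~\ref{prop7.4}. Set $h=\HD(\cJ_r(F))$ and assume toward a contradiction that $h=2$. Pick any point $z\in\cJ_r(F)$ and consider the non-empty relatively open subset $G=\cJ_r(F)\cap B(z,1)$ of $\cJ_r(F)$. By Proposition~\ref{prop7.4} we would have $P^{h}(G)=P^{2}(G)=\infty$. On the other hand, $G$ is a bounded subset of the cylinder $Q$, which is locally isometric to $\R^{2}$, and the $2$-dimensional packing measure on $\R^{2}$ is locally finite (indeed comparable to planar Lebesgue measure on bounded sets), so $P^{2}(G)<\infty$. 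This contradiction forces $h<2$, and combining with the previous paragraph yields $1<\HD(\cJ_r(F))<2$.

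The main (minor) subtlety is simply the interpretation of ``open non-empty subset of $\cJ_r(F)$'' in the statement of Proposition~\ref{prop7.4}: this must be read in the relative topology inherited from $Q$, which is exactly the sense in which the iterated-function-system construction of Theorem~\ref{haus_bounded} and its packing-measure refinement produce the infinity of $P^{h}$. Once this is understood, the set $G=\cJ_r(F)\cap B(z,1)$ is a legitimate choice, and the local finiteness of $P^{2}$ in $\R^{2}$ is standard and requires no further argument. No new estimate on the dynamics of $F$ is needed, so I expect no serious obstacle beyond this notational point.
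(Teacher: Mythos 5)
Your argument is correct and is precisely the combination of Theorem~\ref{haus_bounded} and Proposition~\ref{prop7.4} that the paper invokes: the lower bound follows from $\cJ_{bd}(F)\subset\cJ_r(F)$ and monotonicity of Hausdorff dimension, while the upper bound follows from the contradiction between $P^{2}(G)=\infty$ (Proposition~\ref{prop7.4} applied to a bounded relatively open $G\subset\cJ_r(F)$) and the local finiteness of $2$-dimensional packing measure in the plane. The paper states the corollary without spelling out the details, so your write-up is a faithful completion of the intended argument; an equally short alternative for the upper bound would be to use $\meas(\cJ(F))=0$, recalled in the introduction for $E$-hyperbolic maps, to get $P^{2}(\cJ_r(F))=0$ directly, but your bounded-ball version is just as clean.
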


\section{Graphical examples}
In this section, we present a few examples from the family $f_{\ell,c}$ for different values of $\ell$ and corresponding parameters $c$ as an illustration of the complexity of the Julia sets, the topology of the invariant Baker domain, the immediate basin of attraction, and the pair of sets of wandering domains. 

\subsection{The one from Bergweiler} The first example is the function studied by Bergweiler in \cite{Ber95}. For $\ell=c=2$ we have 
\[
f_{2,2}(z) = 2 -\log 2 + 2z - e^z. 
\]
For this function, the fixed point $z_c=\log 2$ is also a critical value, in other words, is a \emph{super-attracting} fixed point. This way, the post-singular set not only coincides with the singular value, but attracts all the dynamics in the immediate basin of attraction $U_0$ and all of its vertical translates $U_n=U_0+2\pi ni$
\begin{figure}[ht]
\includegraphics[width=14cm]{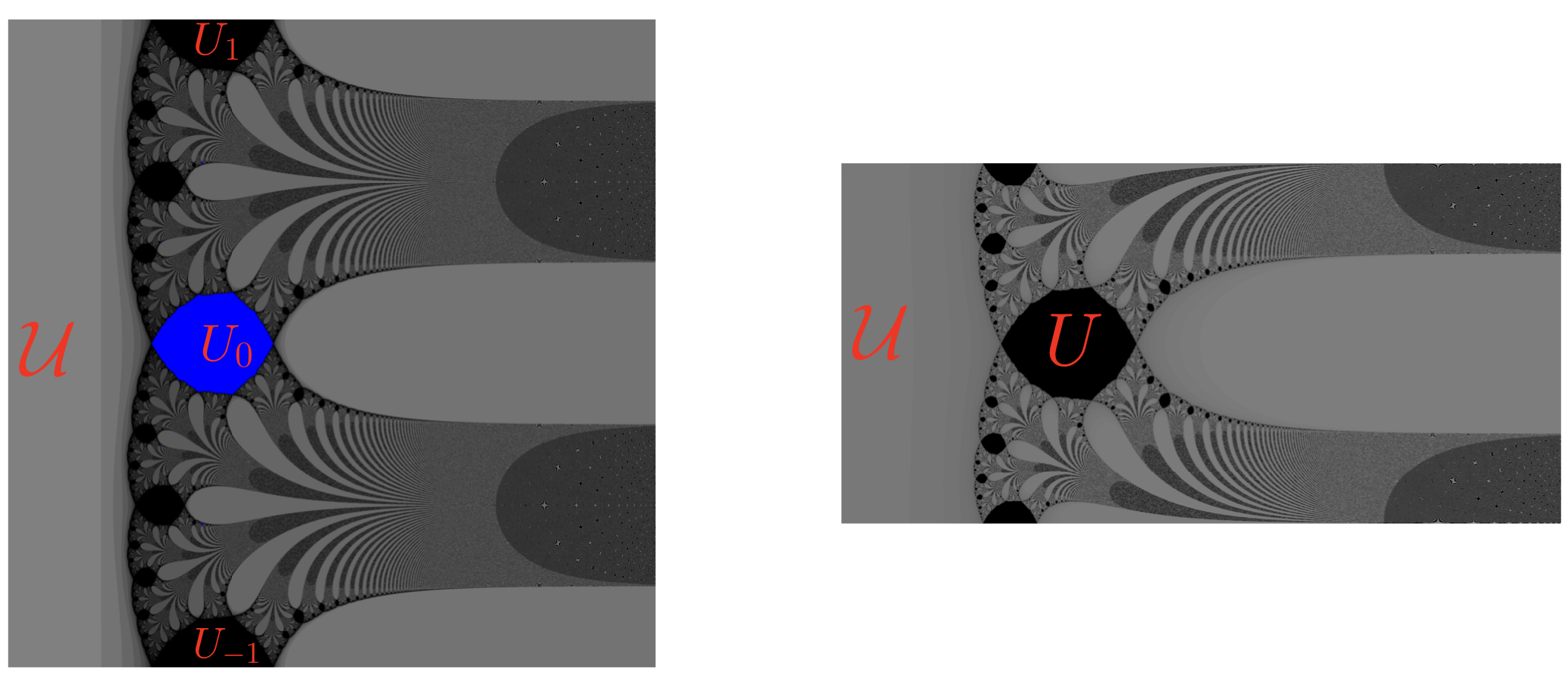}
\centering
\caption{LEFT: Dynamical plane of the function $f_{2,2}$. $U_0$ is the attracting domain (blue), $\mathcal{U}$ is the Baker domain (gray-tones), while $U_i$ represents the wandering domains (black). RIGHT: The simulated dynamics in the quotient space. This case there is no wandering domains.}
\end{figure}
\subsection{A general case} For this example we consider a bigger post-singular set. Take $\ell=3$ and $c=2.7-0.3i$, thus
\[
f_{3,c}(z) = 2.7-0.3i -2\log (2.7-0.3i) + 3z - e^z. 
\]
\begin{figure}[ht]
\includegraphics[width=10cm]{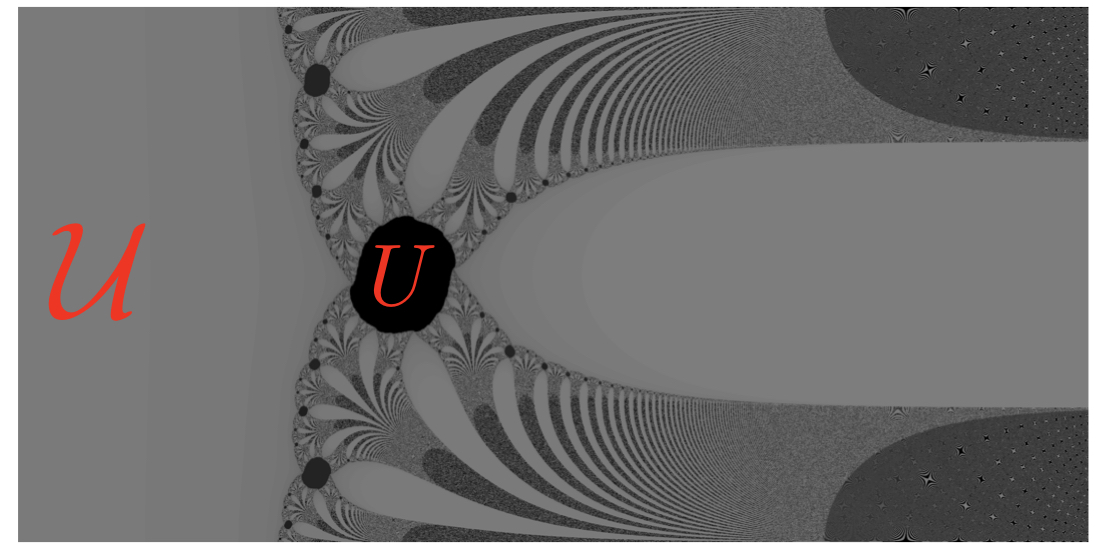}
\centering
\caption{The simulated dynamics of $F_{3,c}$ in the quotient space. Again, there are no wandering domains. }
\end{figure}
Since the critical point $z_0=\log 3$ is no longer a fixed point, the post-singular set is (much) bigger than the singular one. Here, the fixed point $z_c=\log (2.7-0.3i)$ is only \emph{attracting}, but its immediate basin contains the critical point $z_0=\log 3$, so the set $\{z_c+2k\pi i:k\in\Z\}$ coincides with the derived set of the post-singular set. 
\subsection{The one from Fatou family}
As is mentioned in the Introduction, the family $\flc$ contains more dynamics than the Fatou family $f_\lambda(z)=z+\lambda-e^z$. The restriction on $\ell\geq2$, allows the existence of the Baker domain for every $c\in D(\ell,1)$. If we consider $\ell$, and $c\in D(1,1)$, then, the Fatou set has no Baker domain, actually the Fatou set consists of the basin of attraction of the point $z_c=\log c$ and the wandering sets of its vertical translates. In order to $\flc$ belongs to the Fatou family, we need that $\Real(c)<0$. Then, the left half-plane $\Real(z)<0$ is contained in the invariant Baker domain, which is the only component of the Fatou set. For this example we take $\ell=1$ and $c=-0.5$.
\[
f_{1,-0.5}(z) =  z -0.5 - e^z. 
\]
\begin{figure}[ht]
\includegraphics[width=12cm]{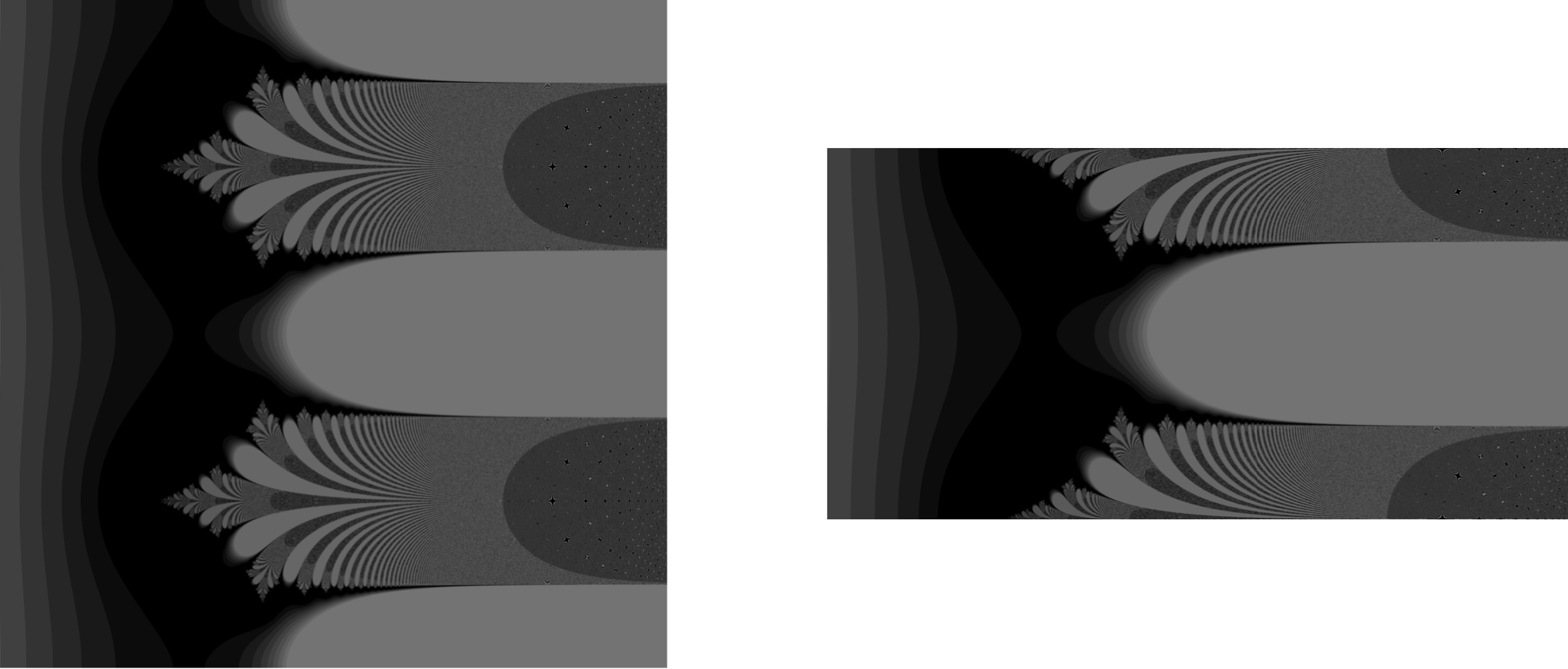}
\centering
\caption{LEFT: Dynamical plane of the function $f_{1,-1/2}$. In this case, there is only one invariant Baker domain (non-univalent). RIGHT: The simulated dynamics in the quotient space. The Fatou set consists of the only Baker domain.}
\end{figure}
\subsection{A boundary parameter case} 
In our setting, the Fatou family is denoted by the form 
\[
f_\lambda(z) = z + \lambda - e^z,\ \Real(\lambda)<0.
\]
And for $\ell=1$, $c\in D(1,1)$, the family $f_{1,c}$ is reduced to 
\[
f_{1,c}(z) = z +c - e^z. 
\]
It is not difficult to notice that there is one, and only one, common boundary point in the two families, corresponding to the parameter $c=0$, obtaining the function
\[
f(z) = z - e^z. 
\]
\begin{figure}[ht]
\includegraphics[width=14cm]{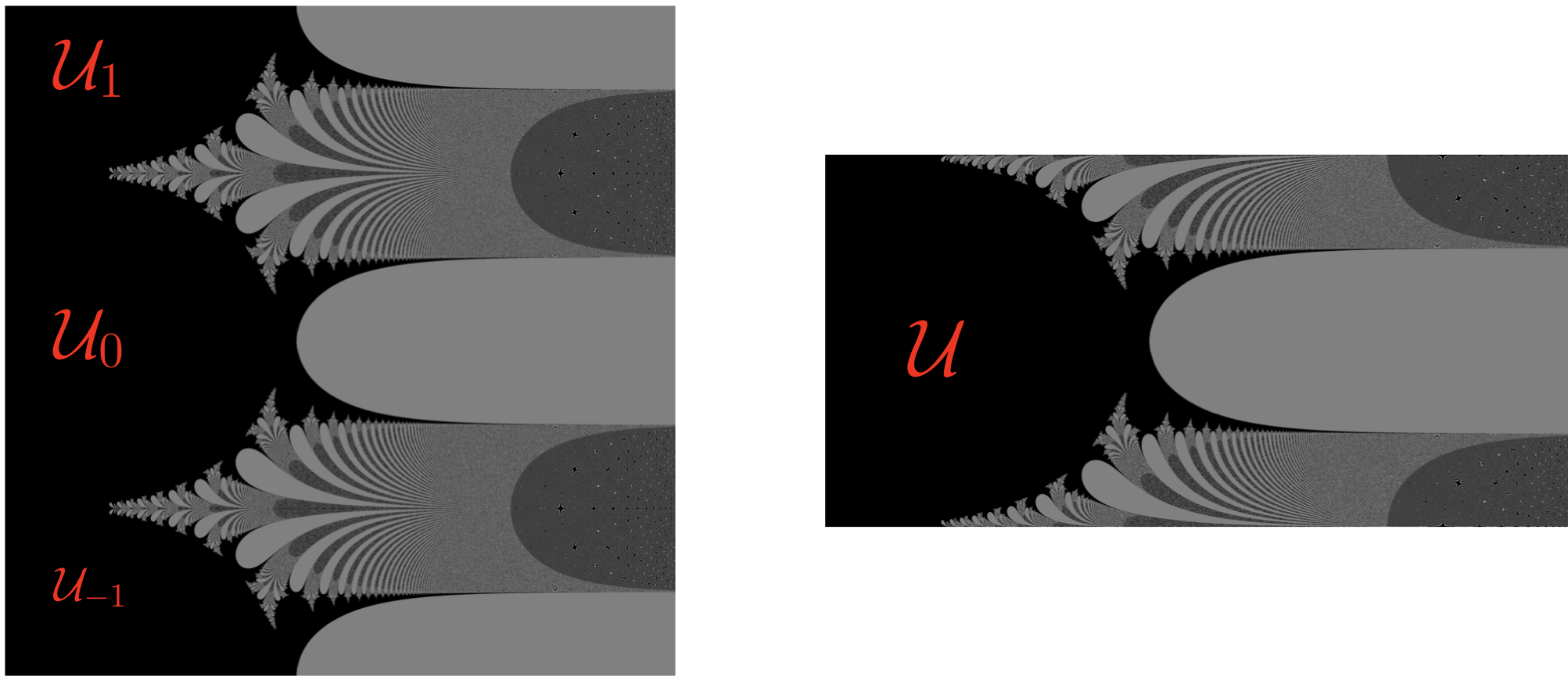}
\centering
\caption{LEFT: Dynamical plane of the function $f$. Each $U_i$ is an invariant Baker domain, each contained in a strip of height $2\pi$. RIGHT: The simulated dynamics in the quotient space. There is only one Baker domain}
\end{figure}
This function has been widely studied in different aspect, from the geometric, to the measurable point of view. The function has been studied in the form $z\mapsto z+e^{-z}$, which is a conformal conjugation under the map $z\mapsto -z$. Some of the important features this function posses are the following. 

In \cite{fh06}, the authors proved that this is a rigid map, i.e., with a trivial deformation space. In \cite{ds15}, it was shown that this function can be obtained as a pinching process from the Fatou function $f_1(z)=z-1-e^z$. Unlike the functions in the Fatou family, $f_{1,0}$ possesses countable many Baker domains, each one containing a unique singular value. 

In terms of measure-theoretic aspects, it is well-known that the dynamics on the boundary of the Baker domains is ergodic. Moreover, the escaping set $\cI(f)$ has zero harmonic measure. Recently in \cite{fj23} the authors proved stronger properties of the boundary set. Results in \cite{fj23} describe dynamically, topologically, and combinatorial the escaping points in the boundary of the Fatou set. 

Since $f_{1,0}$ belongs to the class defined by Stallard in \cite{Sta90}, and respects the equivalence relation, it is natural to ask if the above machinery can be applied to this particular function.


\begin{thebibliography}{XXXX00}
		
\bibitem[Ba1]{Bak2}
    N.I. Baker, 
    \newblock{\em Baker domains in the iteration of entire functions},
    \newblock{Proc. London Math. Soc.}, 49 (1984), 563-576.

\bibitem[Ba2]{Bak3}
    N.I. Baker, 
    \newblock{\em Infinite limits in the iteration of entire functions},
    \newblock{Ergodic Th. and Dynam. Sys.} 8 (1988), 503-507.

\bibitem[BFJK15]{bfjk15}
    K. Bara\'nski, N. Fagella, X. Jarque, and B. Karpinska, 
    \newblock{\em Absorbing sets and Baker domains for holomorphic maps},
    \newblock{J. London Math. Soc.} 92(3), 144-162 (2015).

\bibitem[BFJK19]{bfjk19} 
    K. Bara\'nski, N. Fagella, X. Jarque, and B. Karpinska, 
    \newblock{\em Escaping points in the boundaries of Baker domains}, 
    \newblock{J. Anal. Math.} 137, 679-706 (2019).

\bibitem[Ber95]{Ber95}
	W. Bergweiler,
	\newblock {\emph{Invariant domains and singularities}}.
	\newblock{Mathematical Proceedings of the Cambridge} Philosophical Society, 117(3), 525-532, 1995.

\bibitem[Ber93]{Ber93}
	W. Bergweiler,
	\newblock {\emph{Iteration of meromorphic functions}}.
	\newblock{Bulletin of the American Mathematical Society}, 29, 151-188, 1993.

\bibitem[CG93]{CG} L. Carleson and T.W. Gamelin,
    \newblock{Complex Dynamics}. 
    \newblock{Universitext: Tracts in Mathematics} (Springer, 1993)

\bibitem[DU91]{DU91}
M.~Denker and M.~Urba{\'n}ski,
\newblock {\em Ergodic theory of equilibrium states for rational maps}.
\newblock {Nonlinearity}, 4(1), 103--134, 1991.

\bibitem[DU91a]{DU91a}
M.~Denker and M.~Urba{\'n}ski,
\newblock {On Sullivan'n conformal  measures for rational  maps  of the Riemann sphere.  
}
\newblock {\em Nonlinearity
}, 
 4, 365--384, 1991.



\bibitem[DU91b]{DU91b}
M.~Denker and M.~Urba{\'n}ski,
\newblock {On the existence of conformal measures.
}
\newblock {\em Transactions of the American Mathematical Society
}, 
 328(2), 563--587, 1991.

\bibitem[DS15]{ds15}
P. Domínguez and G. Sienra, 
\newblock{\em Some pinching deformations of the Fatou family}, 
\newblock{Fund. Math.}, 228, no. 1, 1-15, 2015.

\bibitem[EM17]{em17}
A. Esparza-Amador and M. Moreno-Rocha,  
\newblock {\em 
Families of Baker domains for meromorphic functions with countable many essential singularities.}
\newblock {Journal of Difference Equations and Applications}, 23:11, 1869-1883,  2017.

\bibitem[EM21]{em21}
A. Esparza-Amador and M. Moreno-Rocha,  
\newblock { \em Carathèodory convergence for Leau and Baker domains}.
\newblock {Dynamical Systems}, 36:2, pages 256-276, 2021.


%\bibitem[EAK22]{EAK22}
%A. Esparza-Amador and J. Kiwi, 
%\newblock {\em Oscillating wandering domains for
%p-adic transcendental entire maps}.
%\newblock {Bulletin of the London Mathematical Society}, 54, 6, 2532-2560, 2022.

\bibitem[FH06]{fh06}
N. Fagella and C. Henriksen, 
\newblock{Deformation of entire functions with Baker domains}, 
\newblock{Discrete Contin. Dyn. Sist.}, 15, no. 2, 379-394, 2006. 

\bibitem[FJ23]{fj23} 
N. Fagella  and A. Jové, A. 
    \newblock{\em A model for boundary dynamics of Baker domains}.
    \newblock{Mathematische Zeitschrift}, 303:95 (2023)

\bibitem[Fat26]{Fat26}
	P. Fatou,
    \newblock {\em Sur L'itération des Fonctions Transcendantes
    entieres}.
	\newblock {Acta Math in Mathematical Physics}, 223(1), 143--159, 2001.
 

\bibitem[KU05]{KU05}
	J. Kotus and M. Urba\'nski, 
    \newblock {\em The dynamics and geometry of the Fatou functions}. 
    \newblock {Discrete and Continuous Dynamical Systems}, 2005, 13(2): 291-338.
	
%\bibitem[MauU]{MaUb} 
 %   R.L. Mauldin \& M. Urba\'nski, 
  %  \newblock{\em Dimensions and measures in infinite iterated funcion systems}. 
   % \newblock{Proc. London Math. Soc.} 73:3 (1996), 105-154.


 \bibitem[MU10]{MU10}
V. Mayer and M. Urbański, 
\newblock {\em Thermodynamical formalism and
multi-fractal analysis for meromorphic functions of finite order}.
\newblock{Mem. Amer.
Math. Soc}. 203 (2010), no. 954, vi+107.

\bibitem[MU96]{MaUr}  R. L. Mauldin and M. Urba\'nski.
    \newblock{\em Dimensions and measures in infinite iterated function systems}.
    \newblock{Proc. London Math. Soc. } 73:3 (1996), 105-154.

\bibitem[MU20]{MU20}
V. Mayer and M. Urbański,  
\newblock {\em Thermodynamic Formalism and Geometric Applications for Transcendental Meromorphic and Entire Functions}.
\newblock{Lecture Notes in Mathematics}, vol 2290. Springer, Cham.

 	

%\bibitem[Mil06]{Mil06}
%J.~Milnor, 
%\newblock {\em Dynamics in one complex variable}.
%\newblock {volume 160 of Annals of
%	Mathematics Studies}. Princeton University Press, Princeton, NJ, third edition, 2006.

\bibitem[PU10]{PU} F. Przytycki and M. Urba\'nski, 
    \newblock{Conformal Fractals: Ergodic Theory Methods}, volume 371 of 
    \newblock{London Mathematical Society Lecture Notes Series}  Cambridge University Press, Cambridge, 2010.

\bibitem[Rip08]{Rip08}
P.J. Rippon, 
	\newblock {\em Baker domains}.
\newblock {London Math. Soc. Lecture Note Ser}, 348 Cambridge University Press, Cambridge, 371–395, 2008.

\bibitem[Sta90]{Sta90}
	G.M. Stallard, 
	\newblock{\em Entire functions with Julia sets of zero measure}.
	\newblock{Math. Proc. Cambridge Philos. Soc.}, 108(3):551–557, 1990. 

\bibitem[Ste93]{Ste} N. Steinmetz,
    \newblock{\em Rational Iteration}.
    \newblock{Studies in Mathematics}, 16 (de Gruyter, 1993)
%\bibitem[Zhe11]{Zhe11}
%Z. Jian-Hua,
%\newblock {\em Parabolic meromorphic functions}.
%\newblock {Pacific Journal of Mathematics}, 250 (2011), No. 2, 487 – 509.

\end{thebibliography}
\end{document}